\theoremstyle{plain}
\newtheorem{thm}{\protect\theoremname}
\theoremstyle{definition}
\newtheorem{defn}[thm]{\protect\definitionname}
\theoremstyle{plain}
\newtheorem{prop}[thm]{\protect\propositionname}
\theoremstyle{plain}
\newtheorem{cor}[thm]{\protect\corollaryname}
\theoremstyle{plain}
\newtheorem{lem}[thm]{\protect\lemmaname}
\theoremstyle{plain}
\newtheorem{assumption}[thm]{\protect\assumptionname}
\theoremstyle{definition}\newtheorem*{rem*}{\protect\remarkname}\newtheorem{rem}[thm]{\protect\remarkname}
\newtheorem{example}[thm]{\protect\examplename}
\theoremstyle{plain}
\newtheorem{conjecture}[thm]{\protect\conjecturename}
\providecommand{\assumptionname}{Assumption}
\providecommand{\conjecturename}{Conjecture}
\providecommand{\corollaryname}{Corollary}
\providecommand{\definitionname}{Definition}
\providecommand{\examplename}{Example}
\providecommand{\lemmaname}{Lemma}
\providecommand{\propositionname}{Proposition}
\providecommand{\remarkname}{Remark}
\providecommand{\theoremname}{Theorem}
\title{Asymptotic stability of the multidimensional wave equation coupled with classes of positive-real impedance boundary conditions}
\author{Florian Monteghetti and Ghislain Haine and Denis Matignon}
\begin{document}

\maketitle
\centerline{\scshape Florian Monteghetti$^*$}
\medskip
{\footnotesize
\centerline{POEMS (CNRS-INRIA-ENSTA ParisTech), Palaiseau, France}
} 
\medskip
\centerline{\scshape Ghislain Haine and Denis Matignon}
\medskip
{\footnotesize
\centerline{ISAE-SUPAERO, Universit\'{e} de Toulouse, France}
}

\newcommand{\ABLV}{Arendt-Batty-Lyubich-V\~u}
\global\long\def\spaceR{\mathbb{R}}
\global\long\def\spaceC{\mathbb{C}}
\global\long\def\spaceN{\mathbb{N}}
\global\long\def\spaceZ{\mathbb{Z}}
\global\long\def\spaceContinuous{\mathcal{C}}
\global\long\def\spaceD{\mathcal{D}}
\global\long\def\spaceState{H}
\global\long\def\spaceDiff{V}
\global\long\def\spaceDomain{\mathcal{D}(\mathcal{A})}
\global\long\def\spaceBounded{\mathcal{L}}
\global\long\def\opA{\mathcal{A}}
\global\long\def\opK{\mathcal{K}}
\global\long\def\opId{\mathcal{I}}
\global\long\def\opT{\mathcal{T}}
\global\long\def\opZ{\mathcal{Z}}
\global\long\def\opDiv{\operatorname{div}}
\global\long\def\opKer{\operatorname{ker}}
\global\long\def\vector#1{\boldsymbol{#1}}
\global\long\def\idMat{I}
\global\long\def\normal{\boldsymbol{n}}
\global\long\def\transpose{\intercal}
\global\long\def\dinf{\text{d}}
\global\long\def\coordx{\boldsymbol{x}}
\global\long\def\Lyapunov{\Phi}
\global\long\def\hfun{z}
\global\long\def\zfun{z}
\global\long\def\state{X}
\global\long\def\pac{p}
\global\long\def\uac{\boldsymbol{u}}
\global\long\def\rat{\boldsymbol{\varphi}}
\global\long\def\diff{\varphi}
\global\long\def\delay{\chi}
\global\long\def\deriv{\eta}
\global\long\def\srcstate{F}
\global\long\def\srcpac{f_{p}}
\global\long\def\srcuac{\boldsymbol{f_{u}}}
\global\long\def\srcrat{\boldsymbol{f_{\varphi}}}
\global\long\def\srcdiff{f_{\varphi}}
\global\long\def\srcdelay{f_{\chi}}
\global\long\def\srcderiv{f_{\eta}}
\global\long\def\test{\psi}
\begin{abstract}  This paper proves the asymptotic stability of
the multidimensional wave equation posed on a bounded
open Lipschitz set, coupled with various classes of positive-real
impedance boundary conditions, chosen for their physical relevance:
time-delayed, standard diffusive (which includes the Riemann-Liouville
fractional integral) and extended diffusive (which includes the Caputo
fractional derivative). The method of proof consists in formulating
an abstract Cauchy problem on an extended state space using a dissipative
realization of the impedance operator, be it finite or infinite-dimensional.
The asymptotic stability of the corresponding strongly continuous
semigroup is then obtained by verifying the sufficient spectral conditions
derived by Arendt and Batty (Trans. Amer. Math. Soc., 306 (1988))
as well as Lyubich and V{\~u} (Studia Math., 88 (1988)).

\end{abstract}
\section{Introduction}

The broad focus
of this paper is the asymptotic stability of the wave equation with
so-called impedance boundary conditions (IBCs), also known as acoustic
boundary conditions.

Herein, the impedance operator, related to the Neumann-to-Dirichlet
map, is assumed to be continuous linear time-invariant, so that it
reduces to a time-domain convolution. \emph{Passive} convolution operators
\cite[\S~3.5]{beltrami2014distributions}, the kernels of which have
a positive-real Laplace transform, find applications in physics in
the modeling of locally-reacting energy absorbing material, such as
non perfect conductors in electromagnetism \cite{yuferev2010sibc}
and liners in acoustics \cite{monteghetti2017tdibc}. As a result,
IBCs are commonly used with Maxwell's equations \cite{hiptmair2014FastQuadratureIBC},
the linearized Euler equations \cite{monteghetti2017tdibc}, or the
wave equation \cite{sauter2017waveCQ}.

Two classes of convolution operators are well-known due to the ubiquity
of the physical phenomena they model. Slowly decaying kernels, which
yield so-called \emph{long-memory} operators, arise from losses without
propagation (due to e.g. viscosity or electrical/thermal resistance);
they include fractional kernels. On the other hand, lossless propagation,
encountered in acoustical cavity for instance, can be represented
as a \emph{time delay}. Both effects can be combined, so that time-delayed
long-memory operators model a propagation with losses.

Stabilization of the wave equation by a boundary damping, as opposed
to an internal damping, has been investigated in a wealth of works,
most of which employing the equivalent admittance formulation (\ref{eq:=00005BMOD=00005D_IBC-Admittance}),
see Remark~\ref{rem:=00005BMOD=00005D_Terminology} for the terminology.
Unless otherwise specified, the works quoted below deal with the multidimensional
wave equation.

Early studies established exponential stability with a proportional
admittance \cite{chen1981note,lagnese1983decay,komornik1990direct}.
A delay admittance is considered in \cite{nicaise2006stability},
where exponential stability is proven under a sufficient delay-independent
stability condition that can be interpreted as a passivity condition
of the admittance operator. The proof of well-posedness relies on
the formulation of an evolution problem using an infinite-dimensional
realization of the delay through a transport equation (see \cite[\S~VI.6]{engel2000semigroup}
\cite[\S~2.4]{curtainewart1995infinitedim} and references therein)
and stability is obtained using observability inequalities. The addition
of a $2$-dimensional realization to a delay admittance has been considered
in \cite{peralta2018delayibc}, where both exponential and asymptotic
stability results are shown under a passivity condition using the
energy multiplier method. See also \cite{wang2011stabilitydelay}
for a monodimensional wave equation with a non-passive delay admittance,
where it is shown that exponential stability can be achieved provided
that the delay is a multiple of the domain back-and-forth traveling
time.

A class of space-varying admittance with finite-dimensional realizations
have received thorough scrutiny in \cite{abbas2013polynomial} for
the monodimensional case and \cite{abbas2015stability} for the multidimensional
case. In particular, asymptotic stability is shown using the \ABLV{}
(ABLV) theorem in an extended state space.

Admittance kernels defined by a Borel measure on $(0,\infty)$ have
been considered in \cite{cornilleau2009ibc}, where exponential stability
is shown under an integrability condition on the measure \cite[Eq.~(7)]{cornilleau2009ibc}.
This result covers both distributed and discrete time delays, as well
as a class of integrable kernels. Other classes of integrable kernels
have been studied in \cite{desch2010stabilization,peralta2016delayibc,li2018memoryibc}.
Integrable kernels coupled with a $2$-dimensional realization are
considered in \cite{li2018memoryibc} using energy estimates. Kernels
that are both completely monotone and integrable are considered in
\cite{desch2010stabilization}, which uses the ABLV theorem on an
extended state space, and in \cite{peralta2016delayibc} with an added
time delay, which uses the energy method to prove exponential stability.
The energy multiplier method is also used in \cite{alabauboussouira2009ibc}
to prove exponential stability for a class of non-integrable singular
kernels.

The works quoted so far do not cover fractional kernels, which are
non-integrable, singular, and completely monotone. As shown in \cite{matignon2005asymptotic},
asymptotic stability results with fractional kernels can be obtained
with the ABLV theorem by using their realization; two works that follow
this methodology are \cite{matignon2014asymptotic}, which covers
the monodimensional Webster-Lokshin equation with a rational IBC,
and \cite{grabowski2013fracIBC}, which covers a monodimensional wave
equation with a fractional admittance.

The objective of this paper is to prove the asymptotic stability
of the multidimensional wave equation (\ref{eq:=00005BMOD=00005D_Wave-Equation})
coupled with a wide range of IBCs (\ref{eq:=00005BMOD=00005D_IBC})
chosen for their physical relevance. All the considered IBCs share
a common property: the Laplace transform of their kernel is a positive-real
function. A common method of proof, inspired by \cite{matignon2014asymptotic},
is employed that consists in formulating an abstract Cauchy problem
on an extended state space (\ref{eq:=00005BSTAB=00005D_Abstract-Cauchy-Problem})
using a realization of each impedance operator, be it finite or infinite-dimensional;
asymptotic stability is then obtained with the ABLV theorem, although
a less general alternative based on the invariance principle is also
discussed. In spite of the apparent unity of the approach, we are
not able to provide a single, unified proof: this leads us to formulate
a conjecture at the end of this work, which we hope will motivate
further works.

This paper is organized as follows. Section~\ref{sec:=00005BMOD=00005D_Model-and-preliminary-results}
introduces the model considered, recalls some known facts about positive-real
functions, formulates the ABLV theorem as Corollary~\ref{cor:=00005BSTAB=00005D_Asymptotic-Stability},
and establishes a preliminary well-posedness result in the Laplace
domain that is the cornerstone of the stability proofs. The
remaining sections demonstrate the applicability of Corollary~\ref{cor:=00005BSTAB=00005D_Asymptotic-Stability}
to IBCs with infinite-dimensional realizations that arise in physical
applications. Delay IBCs are covered in Section~\ref{sec:=00005BDELAY=00005D_Delay-impedance},
standard diffusive IBCs (e.g. fractional integral) are covered in
Section~\ref{sec:=00005BDIFF=00005D_Standard-diffusive-impedance},
while extended diffusive IBCs (e.g. fractional derivative) are covered
in Section~\ref{sec:=00005BEXTDIFF=00005D_Extended-diffusive-impedance}.
The extension of the obtained asymptotic stability results to IBCs
that contain a first-order derivative term is carried
out in Section~\ref{sec:=00005BDER=00005D_Addition-of-Derivative}.

\subsection*{Notation}

Vector-valued quantities are denoted in bold, e.g. $\vector f$. The
canonical scalar product in $\spaceC^{d}$, $d\in\llbracket1,\infty\llbracket$,
is denoted by $(\vector f,\vector g)_{\spaceC^{d}}\coloneqq\sum_{i=1}^{d}f_{i}\overline{g_{i}}$,
where $\overline{g_{i}}$ is the complex conjugate. Throughout the
paper, scalar products are antilinear with respect to the second argument.
Gradient and divergence are denoted by
\[
\nabla f\coloneqq\left[\partial_{i}f\right]_{i\in\llbracket1,d\rrbracket},\;\opDiv\vector f\coloneqq\sum_{i=1}^{d}\partial_{i}f_{i},
\]
where $\partial_{i}$ is the weak derivative with respect to the $i$-th
coordinate. The scalar product (resp. norm) on a Hilbert space $\spaceState$
is denoted by $(\cdot,\cdot)_{\spaceState}$ (resp. $\Vert\cdot\Vert_{\spaceState}$).
The only exception is the space of square integrable functions $(L^{2}(\Omega))^{d}$,
with $\Omega\subset\spaceR^{d}$ open set, for which the space is
omitted, i.e.
\[
(\vector f,\vector g)\coloneqq\int_{\Omega}(\vector f(\coordx),\vector g(\coordx))_{\spaceC^{d}}\,\dinf\coordx,\;\Vert\vector f\Vert\coloneqq\sqrt{(\vector f,\vector f)}.
\]
The scalar product on $(H^{1}(\Omega))^{d}$ is
\[
(\vector f,\vector g)_{H^{1}(\Omega)}\coloneqq(\vector f,\vector g)+(\nabla\vector f,\nabla\vector g).
\]
The topological dual of a Hilbert space $\spaceState$ is denoted
by $\spaceState^{'}$, and $L^{2}$ is used as a pivot space so that
for instance
\[
H^{\frac{1}{2}}\subset L^{2}\simeq(L^{2})^{'}\subset H^{-\frac{1}{2}},
\]
which leads to the following repeatedly used identity, for $\pac\in L^{2}$
and $\test\in H^{\frac{1}{2}}$,
\begin{equation}
\langle\pac,\test\rangle_{H^{-\frac{1}{2}},H^{\frac{1}{2}}}=\langle\pac,\test\rangle_{(L^{2})^{'},L^{2}}=(\pac,\overline{\test})_{L^{2}},\label{eq:=00005BPRE=00005D_L2-Pivot-Space}
\end{equation}
where $\langle\cdot,\cdot\rangle$ denotes the duality bracket (linear
in both arguments).
\begin{rem}
All the Hilbert spaces considered in this paper are over $\spaceC$.
\end{rem}
Other commonly used notations are $\spaceR^{*}\coloneqq\spaceR\backslash\{0\}$,
$\Re(s)$ (resp. $\Im(s)$) for the real (resp. imaginary) part of
$s\in\spaceC$, $A^{\transpose}$ for the transpose of a matrix $A$,
$R(A)$ (resp. $\opKer(A)$) for the range (resp. kernel) of $A$,
$\spaceContinuous(\Omega)$ for the space of continuous functions,
$\spaceContinuous_{0}^{\infty}(\Omega)$ for the space of infinitely
smooth and compactly supported functions, $\spaceD^{'}(\Omega)$ for
the space of distributions (dual of $\spaceContinuous_{0}^{\infty}(\Omega)$),
$\mathcal{E}^{'}(\Omega)$ for the space of compactly supported distributions,
$\spaceBounded(\spaceState)$ for the space of continuous linear operators
over $\spaceState$, $\overline{\Omega}$ for the closure of $\Omega$,
$Y_{1}:\,\spaceR\rightarrow\{0,1\}$ for the Heaviside function ($1$
over $(0,\infty)$, null elsewhere), and $\delta$ for the Dirac distribution.

\section{Model, strategy, and preliminary results\label{sec:=00005BMOD=00005D_Model-and-preliminary-results}}

Let $\Omega\subset\spaceR^{d}$ be a bounded open set. The Cauchy
problem considered in this paper is the wave equation under one of
its first-order form, namely
\begin{equation}
\partial_{t}\left(\begin{array}{c}
\uac\\
\pac
\end{array}\right)+\left(\begin{array}{c}
\nabla\pac\\
\opDiv\uac
\end{array}\right)=\vector 0\quad\text{on }\Omega,\label{eq:=00005BMOD=00005D_Wave-Equation}
\end{equation}
where $\uac(t,\coordx)\in\spaceC^{d}$ and $\pac(t,\coordx)\in\spaceC$.
To (\ref{eq:=00005BMOD=00005D_Wave-Equation}) is associated the so-called
\emph{impedance boundary condition} (IBC), formally defined as a time-domain
convolution between $\pac$ and $\uac\cdot\normal$,
\begin{equation}
\pac=z\star\uac\cdot\normal\quad\text{a.e. on }\partial\Omega,\label{eq:=00005BMOD=00005D_IBC}
\end{equation}
where $\normal$ is the unit outward normal and $z$ is the impedance
kernel. In general, $z$ is a causal distribution, i.e. $z\in\spaceD_{+}^{'}(\spaceR)$,
so that the convolution is to be understood in the sense of distributions
\cite[Chap.~III]{schwartz1966mathphys} \cite[Chap.~IV]{hormander1990pdevol1}.

This paper proves the asymptotic stability of strong solutions of
the evolution problem (\ref{eq:=00005BMOD=00005D_Wave-Equation},\ref{eq:=00005BMOD=00005D_IBC})
with an impedance kernel $z$ whose positive-real Laplace transform
is given by
\begin{equation}
\hat{z}(s)=\left(z_{0}+z_{\tau}e^{-\tau s}\right)+z_{1}s+\hat{z}_{\text{diff},1}(s)+s\,\hat{z}_{\text{diff},2}(s)\quad(\Re(s)>0),\label{eq:=00005BMOD=00005D_Target-Impedance}
\end{equation}
where $\tau>0$, $z_{\tau}\in\spaceR$, $z_{0}\geq\vert z_{\tau}\vert$,
$z_{1}>0$, and $z_{\text{diff},1}$ as well as $z_{\text{diff},2}$
are both locally integrable completely monotone kernels. The motivation
behind the definition of this kernel is physical as it models passive
systems that arise in e.g. electromagnetics \cite{garrappa2016models},
viscoelasticity \cite{desch1988exponential,mainardi1997frac}, and
acoustics \cite{helie2006diffusive,lombard2016fractional,monteghetti2016diffusive}.

By assumption, the right-hand side of (\ref{eq:=00005BMOD=00005D_Target-Impedance})
is a sum of positive-real kernels that each admit a dissipative realization.
This property enables to prove asymptotic stability with (\ref{eq:=00005BMOD=00005D_Target-Impedance})
by treating each of the four positive-real kernel separately: this
is carried out in Sections~\ref{sec:=00005BDELAY=00005D_Delay-impedance}--\ref{sec:=00005BDER=00005D_Addition-of-Derivative}.
This modularity property enables to keep concise notation by dealing
with the difficulty of each term one by one; it is illustrated in
Section~\ref{sec:=00005BDER=00005D_Addition-of-Derivative}. As already
mentioned in the introduction, the similarity between the four proofs
leads us to formulate a conjecture at the end of the paper.

The purpose of the remainder of this section is to present the strategy employed to establish asymptotic
stability as well as to prove preliminary results. Section~\ref{sub:=00005BMOD=00005D_Positive-real-facts}
justifies why, in order to obtain a well-posed problem
in $L^{2}$, the Laplace transform of the impedance kernel must be
a \emph{positive-real} function. Section~\ref{sub:=00005BSTAB=00005D_Strategy}
details the strategy used to establish asymptotic stability. Section~\ref{sub:=00005BMOD=00005D_Lemma-Rellich}
proves a consequence of the Rellich identity that is then used in
Section~\ref{sub:=00005BMOD=00005D_Well-posedness-Result} to obtain
a well-posedness result on the Laplace-transformed wave equation,
which will be used repeatedly.
\begin{rem}[Terminology]
\label{rem:=00005BMOD=00005D_Terminology} The boundary condition
(\ref{eq:=00005BMOD=00005D_IBC}) can equivalently be written as
\begin{equation}
\uac\cdot\normal=y\star\pac\quad\text{a.e. on }\partial\Omega,\label{eq:=00005BMOD=00005D_IBC-Admittance}
\end{equation}
where $y$ is known as the \emph{admittance} kernel ($y\star z=\delta$,
where $\delta$ is the Dirac distribution). This terminology can be
justified, for example, by the acoustical application: an acoustic
impedance is homogeneous to a pressure divided by a velocity. The
asymptotic stability results obtained in this paper still hold by
replacing the impedance by the admittance (in particular, the statement
``$z\neq0$'' becomes ``$y\neq0$''). The third way of formulating
(\ref{eq:=00005BMOD=00005D_IBC}), not considered in this paper, is
the so-called \emph{scattering} formulation \cite[p.~89]{beltrami2014distributions}
\cite[\S~2.8]{lozano2013dissipative}
\[
\pac-\uac\cdot\normal=\beta\star(\pac+\uac\cdot\normal)\quad\text{a.e. on }\partial\Omega,
\]
where $\beta$ is known as the \emph{reflection coefficient}. A Dirichlet
boundary condition is recovered for $z=0$ ($\beta=-\delta$) while
a Neumann boundary condition is recovered for $y=0$ ($\beta=+\delta$),
so that the proportional IBC, obtained for $z=z_{0}\delta$ ($\beta=\frac{z_{0}-1}{z_{0}+1}\,\delta$),
$z_{0}\geq0$, can be seen as an intermediate between the two.
\end{rem}

\begin{rem}
The use of a convolution in (\ref{eq:=00005BMOD=00005D_IBC}) can
be justified with the following classical result \cite[\S~III.3]{schwartz1966mathphys}
\cite[Thm.~1.18]{beltrami2014distributions}: if $\opZ$ is a linear
time-invariant and continuous mapping from $\mathcal{E}^{'}(\spaceR)$
into $\spaceD^{'}(\spaceR)$, then $\opZ(u)=\opZ(\delta)\star u$
for all $u\in\mathcal{E}^{'}(\spaceR)$.
\end{rem}

\subsection{Why positive-real kernels?\label{sub:=00005BMOD=00005D_Positive-real-facts}}

Assume that $(\uac,\pac)$ is a strong solution, i.e. that it belongs
to $\spaceContinuous([0,T];(H^{1}(\Omega))^{d+1})$. The elementary
a priori estimate
\begin{equation}
\Vert(\uac,\pac)(T)\Vert^{2}=\Vert(\uac,\pac)(0)\Vert^{2}-2\,\Re\left[\int_{0}^{T}(\pac(\tau),\uac(\tau)\cdot\normal)_{L^{2}(\partial\Omega)}\,\dinf\tau\right]\label{eq:=00005BMOD=00005D_Energy-Estimate}
\end{equation}
suggests that to obtain a contraction semigroup, the impedance kernel
must satisfy a passivity condition, well-known in system theory. This
justifies why we restrict ourselves to impedance kernels that are
\emph{admissible} in the sense of the next definition, adapted from
\cite[Def.~3.3]{beltrami2014distributions}.
\begin{defn}[Admissible impedance kernel]
\label{def:=00005BMOD=00005D_Admissible-Impedance} A distribution
$z\in\spaceD^{'}(\spaceR)$ is said to be an \emph{admissible impedance
kernel} if the operator $u\mapsto z\star u$ that maps $\mathcal{E}^{'}(\spaceR)$
into $\spaceD^{'}(\spaceR)$ enjoys the following properties: \textup{(i)}
causality, i.e. $z\in\spaceD_{+}^{'}(\spaceR)$; \textup{(ii)} reality,
i.e. real-valued inputs are mapped to real-valued outputs; \textup{(iii)}
passivity, i.e.
\begin{equation}
\forall u\in\spaceContinuous_{0}^{\infty}(\spaceR),\;\forall T>0,\;\Re\left[\int_{-\infty}^{T}(z\star u(\tau),u(\tau))_{\spaceC}\,\dinf\tau\right]\geq0.\label{eq:=00005BMOD=00005D_Passivity-Cond_Z}
\end{equation}

\end{defn}
An important feature of admissible impedance kernels $z$ is that
their Laplace transforms $\hat{z}$ are \emph{positive-real} functions,
see Definition~\ref{def:=00005BMOD=00005D_PR} and Proposition~\ref{prop:=00005BMOD=00005D_Characterization-LTI-Z}.
Herein, the Laplace transform $\hat{z}$ is an analytic function on
an open \emph{right} half-plane, i.e.
\[
\hat{z}(s)\coloneqq\int_{0}^{\infty}z(t)e^{-st}\,\dinf t\quad\left(s\in\spaceC_{c}^{+}\right),
\]
for some $c\geq0$ with
\[
\spaceC_{c}^{+}\coloneqq\{s\in\spaceC\,\vert\,\Re(s)>c\}.
\]
See \cite[Chap.~6]{schwartz1966mathphys} and \cite[Chap.~2]{beltrami2014distributions}
for the definition when $z\in\spaceD_{+}^{'}(\spaceR)$.
\begin{defn}[Positive-real function]
\label{def:=00005BMOD=00005D_PR}A function $f:\,\spaceC_{0}^{+}\rightarrow\spaceC$
is \emph{positive-real} if $f$ is analytic in $\spaceC_{0}^{+}$,
$f(s)\in\spaceR$ for $s\in(0,\infty)$, and $\Re[f(s)]\geq0$ for
$s\in\spaceC_{0}^{+}$.\end{defn}
\begin{prop}
\label{prop:=00005BMOD=00005D_Characterization-LTI-Z}A causal distribution
$z\in\spaceD_{+}^{'}(\spaceR)$ is an admissible impedance kernel
if and only if $\hat{z}$ is a positive-real function.\end{prop}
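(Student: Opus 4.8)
The plan is to prove the two implications separately. Analyticity and reality of $\hat{z}$ are the routine common ingredients, while the sign condition $\Re[\hat{z}]\ge 0$ carries the analytic weight. Throughout I would work with the bilateral Laplace/Fourier transform of tempered distributions, using the two standard facts that a Laplace-transformable causal distribution has a transform analytic on a right half-plane, and that passivity supplies the growth control needed to make $\hat{z}$ well defined on all of $\spaceC_{0}^{+}$.

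\textbf{Necessity.} Assume $z$ is admissible. Causality (i) gives $z\in\spaceD_{+}^{'}(\spaceR)$; combined with the passivity bound (iii) one shows $z$ is Laplace transformable with $\hat{z}$ analytic on $\spaceC_{0}^{+}$, which is the analyticity requirement of Definition~\ref{def:=00005BMOD=00005D_PR}. The reality property (ii) translates into the Hermitian symmetry $\hat{z}(\overline{s})=\overline{\hat{z}(s)}$, so that $\hat{z}$ is real on $(0,\infty)$. To obtain $\Re[\hat{z}(s)]\ge 0$ for a fixed $s\in\spaceC_{0}^{+}$, I would feed the passivity inequality (\ref{eq:=00005BMOD=00005D_Passivity-Cond_Z}) a smooth compactly supported approximation $u\in\spaceContinuous_{0}^{\infty}(\spaceR)$ of the truncated exponential $t\mapsto e^{st}$: for such an input $z\star u(\tau)\approx\hat{z}(s)\,e^{s\tau}$ on the window, so that $\Re\int(z\star u,u)_{\spaceC}\,\dinf\tau$ reduces to $\Re[\hat{z}(s)]$ times the positive factor $\int e^{2\Re(s)\tau}\,\dinf\tau$; passivity then forces $\Re[\hat{z}(s)]\ge 0$.

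\textbf{Sufficiency.} Conversely, assume $\hat{z}$ is positive-real. Analyticity on $\spaceC_{0}^{+}$ together with the polynomial growth estimate satisfied by positive-real functions lets me invoke a Paley--Wiener--Schwartz type theorem to conclude that $z$ is a tempered distribution supported in $[0,\infty)$, i.e. $z\in\spaceD_{+}^{'}(\spaceR)$, which is (i). Reality of $\hat{z}$ on $(0,\infty)$ yields, again by Hermitian symmetry of the transform, that $z$ maps real inputs to real outputs, giving (ii). For passivity (iii) the key observation is that causality lets the truncated functional see only the truncated input: writing $u_{T}\coloneqq u\,Y_{1}(T-\cdot)$, causality of $z$ gives $\int_{-\infty}^{T}(z\star u,u)_{\spaceC}\,\dinf\tau=\int_{-\infty}^{\infty}(z\star u_{T},u_{T})_{\spaceC}\,\dinf\tau$, reducing the truncated condition to a full-line one. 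Plancherel's theorem then rewrites this full-line integral as $\tfrac{1}{2\pi}\int_{-\infty}^{\infty}\Re[\hat{z}(i\omega)]\,\vert\hat{u}_{T}(i\omega)\vert^{2}\,\dinf\omega$, which is nonnegative because the boundary values of a positive-real function satisfy $\Re[\hat{z}(i\omega)]\ge 0$.

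\textbf{Main obstacle.} The delicate points are analytic rather than algebraic. First, the sufficiency proof requires descending from $\Re[\hat{z}(s)]\ge 0$ on the open half-plane to $\Re[\hat{z}(i\omega)]\ge 0$ for the boundary values, which means interpreting $\hat{z}(i\omega)$ as a distributional (or nontangential) limit and invoking that a nonnegative harmonic function has nonnegative boundary values; a limiting device, such as damping $u_{T}$ by $e^{-\epsilon t}$ to stay inside $\spaceC_{0}^{+}$, may be needed to avoid the boundary altogether. Second, the exponential-input argument of the necessity proof and the truncation/Plancherel argument of the sufficiency proof both require care in approximating non-smooth or non-compactly-supported inputs by elements of $\spaceContinuous_{0}^{\infty}(\spaceR)$ and in passing the convolution and the limit through the only distributionally defined pairing (in particular $z\star u_{T}$ need not lie in $L^{2}$). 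This is where the bulk of the rigor lies, and where I would lean on the cited results of Beltrami and Schwartz.
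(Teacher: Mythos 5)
The paper does not actually prove this proposition: its ``proof'' is a citation to \cite[\S~2.11]{lozano2013dissipative} for integrable kernels and to \cite[\S~3.5]{beltrami2014distributions} for general causal distributions, plus the parenthetical remark that admissibility forces $z$ to be tempered. Your sketch reconstructs, correctly in outline, the standard argument that those references carry out, so it is a faithful expansion of the delegated proof rather than a different route. The two places where the real work lies are exactly the ones you flag. In the necessity direction, before $\hat{z}$ can even be discussed one must prove that causality plus passivity make $z$ Laplace-transformable (this is the content of the paper's parenthetical remark and is itself a nontrivial theorem in \cite{beltrami2014distributions}); only then does the exponential-input computation give $\Re[\hat{z}(s)]\int_{-\infty}^{T}e^{2\Re(s)\tau}\,\dinf\tau\geq0$, and the approximation of the non-compactly-supported input $e^{s\cdot}$ by elements of $\spaceContinuous_{0}^{\infty}(\spaceR)$ uses precisely that temperedness to control the tail of the convolution. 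In the sufficiency direction, $\Re[\hat{z}(i\omega)]$ is in general only a nonnegative \emph{measure} (consider $\hat{z}(s)=1/s$), so the Plancherel identity cannot be written naively on the imaginary axis; your proposed detour through the line $\Re(s)=\epsilon>0$ followed by $\epsilon\rightarrow0$ is the correct fix, and is essentially how the integral representation behind Remark~\ref{rem:=00005BMOD=00005D_PR-growth-at-infinity} is derived in \cite{beltrami2014distributions}. One further detail worth noting: the pairing $\int_{-\infty}^{T}(z\star u_{T},u_{T})_{\spaceC}\,\dinf\tau$ with the truncated input $u_{T}=u\,Y_{1}(T-\cdot)$ can be delicate at $\tau=T$ when $z$ has a singular part (e.g. $z=\delta'$), since $z\star u_{T}$ then acquires a point mass exactly where $u_{T}$ jumps; this is handled in the references by working with the original integral over $(-\infty,T]$ and only using the truncation identity strictly below $T$. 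With those caveats, which you correctly identify as the locus of rigor, the proposal is sound.
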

\begin{proof}
See \cite[\S~2.11]{lozano2013dissipative} for the case where the
kernel $z\in L^{1}(\spaceR)$ is a function and \cite[\S~3.5]{beltrami2014distributions}
for the general case where $z\in\spaceD_{+}^{'}(\spaceR)$ is a causal
distribution. (Note that, if $z$ is an admissible impedance kernel,
then $z$ is also tempered.)\end{proof}
\begin{rem}
\label{rem:=00005BMOD=00005D_PR-growth-at-infinity}The growth at
infinity of positive-real functions is at most polynomial. More specifically,
from the integral representation of positive-real functions \cite[Eq.~(3.21)]{beltrami2014distributions},
it follows that for $\Re(s)\geq c>0$, $\vert\hat{z}(s)\vert\leq C(c)P(\vert s\vert)$
where $P$ is a second degree polynomial.
\end{rem}

\subsection{Abstract framework for asymptotic stability\label{sub:=00005BSTAB=00005D_Strategy}}

Let the causal distribution $z\in\spaceD_{+}^{'}(\spaceR)$ be an
admissible impedance kernel. In order to prove the asymptotic stability
of (\ref{eq:=00005BMOD=00005D_Wave-Equation},\ref{eq:=00005BMOD=00005D_IBC}),
we will use the following strategy in Sections~\ref{sec:=00005BDELAY=00005D_Delay-impedance}--\ref{sec:=00005BDER=00005D_Addition-of-Derivative}.
We first rely on the knowledge of a realization of the impedance operator
$u\mapsto z\star u$ to formulate an abstract Cauchy problem on a
Hilbert space $\spaceState$,
\begin{equation}
\dot{\state}(t)=\opA\state,\;\state(0)=\state_{0}\in\spaceState,\label{eq:=00005BSTAB=00005D_Abstract-Cauchy-Problem}
\end{equation}
where the extended state $\state$ accounts for the memory of the
IBC. The scalar product $(\cdot,\cdot)_{\spaceState}$ is defined
using a Lyapunov functional associated with the realization. Since,
by design, the problem has the energy estimate $\Vert\state(t)\Vert_{\spaceState}\leq\Vert\state_{0}\Vert_{\spaceState}$,
it is natural to use the Lumer-Phillips theorem to show that the unbounded
operator
\begin{equation}
\opA:\,\spaceDomain\subset\spaceState\rightarrow\spaceState\label{eq:=00005BSTAB=00005D_Definition-A}
\end{equation}
generates a strongly continuous semigroup of contractions on $\spaceState$,
denoted by $\opT(t)$. For initial data in $\spaceDomain$, the function
\begin{equation}
t\mapsto\opT(t)\state_{0}\label{eq:=00005BSTAB=00005D_Solution}
\end{equation}
provides the unique strong solution in $\spaceContinuous([0,\infty);\spaceDomain)\cap\spaceContinuous^{1}([0,\infty);\spaceState)$
of the evolution problem (\ref{eq:=00005BSTAB=00005D_Abstract-Cauchy-Problem})
\cite[Thm.~1.3]{pazy1983stability}. For (less regular) initial data
in $\spaceState$, the solution is milder, namely $\spaceContinuous([0,\infty);\spaceState)$.

To prove the asymptotic stability of this solution, we rely upon the
following result, where we denote by $\sigma(\opA)$ (resp. $\sigma_{p}(\opA)$)
the spectrum (resp. point spectrum) of $\opA$ \cite[\S~VIII.1]{yosida1980funana}.
\begin{cor}
\label{cor:=00005BSTAB=00005D_Asymptotic-Stability}Let $\spaceState$
be a complex Hilbert space and $\opA$ be defined as (\ref{eq:=00005BSTAB=00005D_Definition-A}).
If
\begin{itemize}
\item[\textup{(i)}]  $\opA$ is dissipative, i.e. $\Re(\opA\state,\state)_{H}\leq0$
for every $\state\in\spaceDomain$,
\item[\textup{(ii)}]  $\opA$ is injective,
\item[\textup{(iii)}]  $s\opId-\opA$ is bijective for $s\in(0,\infty)\cup i\spaceR^{*}$,
\end{itemize}

then $\opA$ is the infinitesimal generator of a strongly continuous
semigroup of contractions $\opT(t)\in\spaceBounded(\spaceState)$
that is asymptotically stable, i.e.
\begin{equation}
\forall\state_{0}\in\spaceState,\;\Vert\opT(t)\state_{0}\Vert_{\spaceState}\underset{t\rightarrow\infty}{\longrightarrow}0.\label{eq:=00005BSTAB=00005D_T-asymptotic-stability}
\end{equation}

\end{cor}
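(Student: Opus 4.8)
The plan is to obtain the semigroup from the Lumer--Phillips theorem and its asymptotic decay from the \ABLV{} theorem. For the generation step, note that $\spaceState$, being a Hilbert space, is reflexive; thus dissipativity~(i) together with the range condition $R(\opId-\opA)=\spaceState$ supplied by~(iii) at $s=1$ already forces $\spaceDomain$ to be dense and $\opA$ to generate a $C_{0}$-semigroup of contractions $\opT(t)$ \cite[\S~1.4]{pazy1983stability}. Dissipativity also yields $\Vert(s\opId-\opA)x\Vert_{\spaceState}\ge s\,\Vert x\Vert_{\spaceState}$ for $s>0$ and $x\in\spaceDomain$, so that the surjectivity in~(iii) upgrades to $(0,\infty)\subset\rho(\opA)$, where $\rho$ denotes the resolvent set. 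As a contraction semigroup, $\opT(t)$ is bounded, which is the standing hypothesis of the \ABLV{} theorem.

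Next I would read off the spectrum on the imaginary axis. Since $\opA$ is closed, the bijectivity in~(iii) means $i\spaceR^{*}\subset\rho(\opA)$, whence $\sigma(\opA)\cap i\spaceR\subseteq\{0\}$ is countable (indeed finite). One should not expect $0\in\rho(\opA)$ here: in these problems $0$ belongs to the continuous spectrum, which is precisely why only asymptotic, and not exponential, stability is available, and why injectivity~(ii) stands in for bijectivity at the origin. By~(ii) one has $\opKer(\opA)=\{0\}$, and since $i\spaceR^{*}$ carries no spectrum at all, $\opA$ has no imaginary eigenvalue whatsoever: $\sigma_{p}(\opA)\cap i\spaceR=\emptyset$.

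The \ABLV{} theorem requires instead that $\opA^{*}$ have no eigenvalue on $i\spaceR$, so the heart of the proof is to pass this information to the adjoint. On $i\spaceR^{*}$ it is automatic: for a closed densely defined operator $\rho(\opA^{*})=\overline{\rho(\opA)}$, and $i\spaceR^{*}$ is invariant under conjugation, so $i\spaceR^{*}\subset\rho(\opA^{*})$. At the origin I would argue directly from dissipativity that $\opKer(\opA^{*})=\opKer(\opA)$. Indeed, for $x\in\opKer(\opA)$ and arbitrary $y\in\spaceDomain$, inserting $w=x+ty$ ($t\in\spaceC$) into $\Re(\opA w,w)_{\spaceState}\le0$ and using $\opA x=0$ gives $\Re[\,t\,(\opA y,x)_{\spaceState}\,]\le|t|^{2}\,|\Re(\opA y,y)_{\spaceState}|$; choosing $\arg t$ so that $t\,(\opA y,x)_{\spaceState}=|t|\,|(\opA y,x)_{\spaceState}|$ and letting $|t|\to0$ forces $(\opA y,x)_{\spaceState}=0$, so $x\perp R(\opA)$, i.e. $x\in R(\opA)^{\perp}=\opKer(\opA^{*})$. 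Running the same computation for $\opA^{*}$ — which likewise generates a contraction semigroup (the adjoint semigroup), is therefore dissipative, and satisfies $(\opA^{*})^{*}=\opA$ — yields the reverse inclusion, so $\opKer(\opA^{*})=\opKer(\opA)=\{0\}$ by~(ii). Combined with $i\spaceR^{*}\subset\rho(\opA^{*})$, this gives $\sigma_{p}(\opA^{*})\cap i\spaceR=\emptyset$.

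Finally, with $\opT(t)$ bounded, $\sigma(\opA)\cap i\spaceR$ countable, and $\sigma_{p}(\opA^{*})\cap i\spaceR=\emptyset$, the \ABLV{} theorem delivers~(\ref{eq:=00005BSTAB=00005D_T-asymptotic-stability}). I expect the adjoint passage to be the sole genuine difficulty: hypotheses~(ii)--(iii) constrain $\opA$, whereas \ABLV{} constrains $\opA^{*}$, and the gap is bridged only through the dissipativity identity $\opKer(\opA)=\opKer(\opA^{*})$; without dissipativity, injectivity of $\opA$ would not control the point spectrum of $\opA^{*}$ at the origin.
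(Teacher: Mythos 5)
Your proof is correct, and its skeleton is the same as the paper's: Lumer--Phillips for generation, the closed graph theorem (closedness of $\opA$) to convert bijectivity of $s\opId-\opA$ on $i\spaceR^{*}$ into $i\spaceR^{*}\subset\rho(\opA)$, and then the \ABLV{} theorem. The one genuine divergence is the final step. The paper invokes a version of \ABLV{} already specialized to contraction semigroups and phrased directly in terms of $\sigma_{p}(\opA)$ (its Theorem~\ref{thm:=00005BSTAB=00005D_Arendt-Batty}), so that hypothesis~(ii) finishes the proof with no mention of the adjoint. You instead work from the original statement, which constrains $\sigma_{p}(\opA^{*})\cap i\spaceR$, and bridge the gap yourself: $i\spaceR^{*}\subset\rho(\opA^{*})$ by conjugation symmetry of the resolvent set, and $\opKer(\opA^{*})=\opKer(\opA)=\{0\}$ via the perturbation argument $w=x+ty$ in the dissipativity inequality, applied to both $\opA$ and the (also dissipative) adjoint generator. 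That computation is correct and is in effect a proof, at the point $\lambda=0$, of the general fact that a contraction-semigroup generator and its adjoint have the same point spectrum on $i\spaceR$ --- which is precisely what licenses the simplified statement the paper cites. So your route is slightly longer but more self-contained; the paper's is shorter at the cost of leaning on a tailored form of \ABLV{}.
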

\begin{proof}
The Lumer-Phillips theorem, recalled in Theorem~\ref{thm:=00005BSTAB=00005D_Lumer-Phillips},
shows that $\opA$ generates a strongly continuous semigroup of contractions
$\opT(t)\in\spaceBounded(\spaceState)$. In particular $\opA$ is
closed, from the Hille-Yosida theorem \cite[Thm.~3.1]{pazy1983stability},
so that the resolvent operator $(s\opId-\opA)^{-1}$ is closed whenever
it is defined. A direct application of the closed graph theorem \cite[\S~II.6]{yosida1980funana}
then yields
\[
\left\{ s\in\spaceC\;\vert\;s\opId-\opA\;\text{is bijective}\right\} \subset\rho(\opA),
\]
where $\rho(\opA)$ denotes the resolvent set of $\opA$ \cite[\S~VIII.1]{yosida1980funana}.
Hence $i\spaceR^{*}\subset\rho(\opA)$ and Theorem~\ref{thm:=00005BSTAB=00005D_Arendt-Batty}
applies since $0\notin\sigma_{p}(\opA)$.\end{proof}
\begin{rem}
Condition (iii) of Corollary~\ref{cor:=00005BSTAB=00005D_Asymptotic-Stability}
could be loosened by only requiring that $s\opId-\opA$ be surjective
for $s\in(0,\infty)$ and bijective for $s\in i\spaceR^{*}$. However,
in the proofs presented in this paper we always prove bijectivity
for $s\in(0,\infty)\cup i\spaceR^{*}$.
\end{rem}

\subsection{A consequence of the Rellich identity\label{sub:=00005BMOD=00005D_Lemma-Rellich}}

Using the Rellich identity, we prove below that the
Dirichlet and Neumann Laplacians do not have an eigenfunction in common.
\begin{prop}
\label{prop:=00005BMOD=00005D_Rellich-Lemma}Let $\Omega\subset\spaceR^{d}$
be a bounded open set. If $\pac\in H_{0}^{1}(\Omega)$ satisfies
\begin{equation}
\forall\test\in H^{1}(\Omega),\;(\nabla\pac,\nabla\test)=\lambda(\pac,\test)\label{eq:=00005BMOD=00005D_Rellich-WeakForm}
\end{equation}
for some $\lambda\in\spaceC$, then $\pac=0$ a.e.
in $\Omega$.\end{prop}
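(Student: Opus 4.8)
The plan is to remove the boundary altogether by extending $\pac$ by zero: the two boundary conditions hidden in the hypotheses (homogeneous Dirichlet data, since $\pac\in H_0^1(\Omega)$, and homogeneous Neumann data, encoded in the fact that (\ref{eq:=00005BMOD=00005D_Rellich-WeakForm}) holds against \emph{all} of $H^1(\Omega)$) combine into a single Helmholtz equation $-\Delta\tilde\pac=\lambda\tilde\pac$ posed on all of $\spaceR^d$, from which the Rellich identity yields the conclusion. This detour is essentially forced by the generality of the statement: since $\Omega$ is merely a bounded open set, with no regularity of $\partial\Omega$, the classical Rellich identity and its boundary integrals cannot be written directly on $\Omega$.

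First I would pin down $\lambda$. Taking $\test=\pac$ in (\ref{eq:=00005BMOD=00005D_Rellich-WeakForm}), which is legitimate because $H_0^1(\Omega)\subset H^1(\Omega)$, gives $\Vert\nabla\pac\Vert^2=\lambda\Vert\pac\Vert^2$; hence if $\pac\neq0$ then $\lambda=\Vert\nabla\pac\Vert^2/\Vert\pac\Vert^2\in[0,\infty)$, so $\lambda$ is automatically real and nonnegative. The case $\lambda=0$ is then immediate: $\nabla\pac=0$, and the Poincar\'e inequality on the bounded set $\Omega$ forces $\pac=0$. It thus remains to exclude a nonzero $\pac$ with $\lambda>0$.

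The key step is the extension. Extension by zero maps $H_0^1(\Omega)$ into $H^1(\spaceR^d)$, giving $\tilde\pac\in H^1(\spaceR^d)$ whose gradient is the zero-extension of $\nabla\pac$. For any $\test\in\spaceContinuous_0^\infty(\spaceR^d)$, its restriction to $\Omega$ lies in $H^1(\Omega)$ (but generally \emph{not} in $H_0^1(\Omega)$, as it need not vanish on $\partial\Omega$), so (\ref{eq:=00005BMOD=00005D_Rellich-WeakForm}) applies and yields $(\nabla\tilde\pac,\nabla\test)=\lambda(\tilde\pac,\test)$ over $\spaceR^d$; that is, $-\Delta\tilde\pac=\lambda\tilde\pac$ in $\spaceD'(\spaceR^d)$. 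It is precisely the admission of test functions that do not vanish near $\partial\Omega$ (the Neumann information) that produces the equation on the whole space with no spurious boundary term. I expect this middle step to be the main obstacle: justifying the zero-extension and, above all, extracting the global equation cleanly from the ``for all $\test\in H^1(\Omega)$'' hypothesis.

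Finally I would conclude with Rellich. Interior elliptic regularity gives $\tilde\pac\in H_{\mathrm{loc}}^2(\spaceR^d)$, and since $\tilde\pac$ is supported in the bounded set $\overline\Omega$ it vanishes in a neighborhood of $\partial B_R$ for any ball $B_R\supset\overline\Omega$. Applying the Rellich identity on $B_R$ with the radial field $\coordx$, all boundary integrals vanish (because $\tilde\pac$ and $\nabla\tilde\pac$ vanish near $\partial B_R$), leaving $2\lambda\int_{B_R}|\tilde\pac|^2\,\dinf\coordx=0$; since $\lambda>0$ this forces $\tilde\pac=0$, hence $\pac=0$. Equivalently, one may finish without Rellich: $\tilde\pac$ is real-analytic and compactly supported, hence identically zero; or, by Fourier transform, $(|\xi|^2-\lambda)\,\widehat{\tilde\pac}(\xi)=0$ confines $\widehat{\tilde\pac}$ to the measure-zero sphere $|\xi|=\sqrt\lambda$, so $\tilde\pac=0$. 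Once the global equation of the previous paragraph is in hand, any of these conclusions is routine.
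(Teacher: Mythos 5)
Your proof is correct, and its central device---extension of $\pac$ by zero, exploiting that (\ref{eq:=00005BMOD=00005D_Rellich-WeakForm}) holds against all of $H^{1}(\Omega)$ and not merely $H_{0}^{1}(\Omega)$---is exactly the paper's (it is the step credited to Prof.\ Ciarlet). Where you diverge is the endgame. The paper extends only to a ball $\mathcal{B}\supset\overline{\Omega}$, keeps the weak formulation there, and then invokes the smooth-boundary machinery: regularity of Dirichlet eigenfunctions up to $\partial\mathcal{B}$ to get $\tilde{\pac}\in\spaceContinuous^{\infty}(\overline{\mathcal{B}})$, the vanishing of $\partial_{n}\tilde{\pac}$ on $\partial\mathcal{B}$, and finally Rellich's boundary-integral formula (\ref{eq:=00005BMOD=00005D_Rellich-Formula}) expressing $\Vert\tilde{\pac}\Vert^{2}$ through $\int_{\partial\mathcal{B}}(\partial_{n}\tilde{\pac})^{2}\partial_{n}(\vert\coordx\vert^{2})$. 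You instead push the extension to all of $\spaceR^{d}$, read off the global equation $-\Delta\tilde{\pac}=\lambda\tilde{\pac}$ in $\spaceD^{'}(\spaceR^{d})$ from test functions in $\spaceContinuous_{0}^{\infty}(\spaceR^{d})$, and conclude from the fact that $\tilde{\pac}$ is compactly supported---via the Pohozaev--Rellich identity on a large ball with all boundary terms vanishing, or more simply via real-analyticity or the Fourier transform ($\widehat{\tilde{\pac}}$ entire and supported on the sphere $\vert\xi\vert=\sqrt{\lambda}$, hence zero). Your route buys independence from boundary regularity theory and from the eigenfunction-regularity result \cite[Thm.~8.13]{gilbarg2001elliptic}; the unique-continuation-flavoured endings are arguably more robust than the boundary formula. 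Your preliminary reduction ($\lambda\in[0,\infty)$ by testing with $\pac$, and Poincar\'e for $\lambda=0$) is a clean way of handling the degenerate case that the paper disposes of by citing that $0$ is not a Dirichlet eigenvalue. One cosmetic point: the constant in your Rellich conclusion ($2\lambda$ versus $\lambda$) depends on the normalization of the identity, but this does not affect the argument since only $\lambda>0$ matters.
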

\begin{proof}
Let $\pac\in H_{0}^{1}(\Omega)$ be such that (\ref{eq:=00005BMOD=00005D_Rellich-WeakForm})
holds for some $\lambda\in\spaceC$. The proof is divided
in two steps.

(a) Let us first assume that $\partial\Omega$ is $\spaceContinuous^{\infty}$.
In particular,
\[
\forall\test\in H_{0}^{1}(\Omega),\;(\nabla\pac,\nabla\test)=\lambda(\pac,\test),
\]
so that $\pac$ is either null a.e. in $\Omega$ or an eigenfunction
of the Dirichlet Laplacian. In the latter case, since the boundary
$\partial\Omega$ is of class $\spaceContinuous^{\infty}$, we have
the regularity result $\pac\in\spaceContinuous^{\infty}(\overline{\Omega})$
\cite[Thm.~8.13]{gilbarg2001elliptic}. An integration by parts then
shows that, for $\test\in H^{1}(\Omega)$,
\[
(\partial_{n}\pac,\test)_{L^{2}(\partial\Omega)}=(\Delta\pac+\lambda\pac,\test)=0,
\]
so that $\partial_{n}\pac=0$ in $\partial\Omega$. However since
$\pac$ is $\spaceContinuous^{2}(\overline{\Omega})$ and $\partial\Omega$
is smooth we have \cite{rellich1940eigenfunctions}
\begin{equation}
\Vert\pac\Vert^{2}=\frac{\int_{\partial\Omega}(\partial_{n}\pac)^{2}\partial_{n}(\vert\coordx\vert^{2})\,\dinf\coordx}{4\lambda},\label{eq:=00005BMOD=00005D_Rellich-Formula}
\end{equation}
which shows that $\pac=0$ in $\Omega$. (The spectrum of the Dirichlet
Laplacian does not include $0$ \cite[\S~8.12]{gilbarg2001elliptic}.)

(b) Let us now assume that $\partial\Omega$ is not
$\spaceContinuous^{\infty}$. The strategy, suggested to us by Prof.
Patrick Ciarlet, is to get back to (a) by extending $\pac$ by zero.
Let $\mathcal{B}$ be an open ball such that $\overline{\Omega}\subset\mathcal{B}$.
We denote $\tilde{\pac}$ the extension of $\pac$ by zero, i.e. $\tilde{\pac}=\pac$
on $\Omega$ with $\tilde{\pac}$ null on $\mathcal{B}\backslash\Omega$.
From Proposition~\ref{prop:Extension-by-zero}, we have $\tilde{\pac}\in H_{0}^{1}\left(\mathcal{B}\right)$.
Using the definition of $\tilde{\pac}$, we can write
\begin{alignat*}{1}
\forall\test\in H^{1}(\mathcal{B}),\;(\nabla\tilde{\pac},\nabla\test)_{L^{2}\left(\mathcal{B}\right)} & =\left(\nabla\pac,\nabla\left[\test_{\vert\Omega}\right]\right)_{L^{2}\left(\Omega\right)}=\lambda\left(\pac,\test_{\vert\Omega}\right)_{L^{2}\left(\Omega\right)}=\lambda(\tilde{\pac},\test)_{L^{2}\left(\mathcal{B}\right)},
\end{alignat*}
so that applying (a) to $\tilde{\pac}\in H_{0}^{1}\left(\mathcal{B}\right)$
gives $\tilde{\pac}=0$ a.e. in $\mathcal{B}$.
\end{proof}

\subsection{A well-posedness result in the Laplace domain\label{sub:=00005BMOD=00005D_Well-posedness-Result}}

The following result will be used repeatedly. We define
\[
\overline{\spaceC_{0}^{+}}\coloneqq\{s\in\spaceC\,\vert\,\Re(s)\geq0\}.
\]

\begin{thm}
\label{thm:=00005BMOD=00005D_Weak-Form-p}Let $\Omega\subset\spaceR^{d}$
be a bounded open set with a Lipschitz boundary. Let
$z:\,\overline{\spaceC_{0}^{+}}\backslash\{0\}\rightarrow\spaceC_{0}^{+}$
be such that $z(s)\in\spaceR$ for $s\in(0,\infty)$. For every $s\in\overline{\spaceC_{0}^{+}}\backslash\{0\}$
and $l\in H^{-1}(\Omega)$ there exists a unique $\pac\in H^{1}(\Omega)$
such that
\begin{equation}
\forall\test\in H^{1}(\Omega),\;(\nabla\pac,\nabla\test)+s^{2}(\pac,\test)+\frac{s}{z(s)}(\pac,\test)_{L^{2}(\partial\Omega)}=\overline{l(\test)}.\label{eq:=00005BMOD=00005D_Weak-Form-p}
\end{equation}
Moreover, there is $C(s)>0$, independent of $\pac$, such that
\[
\Vert\pac\Vert_{H^{1}(\Omega)}\leq C(s)\,\Vert l\Vert_{H^{-1}(\Omega)}.
\]
\end{thm}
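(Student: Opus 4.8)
The plan is to recast (\ref{eq:=00005BMOD=00005D_Weak-Form-p}) as an operator equation $(\opId+\opK)\pac=g$ on $\spaceState\coloneqq H^{1}(\Omega)$ with $\opK$ compact, and then invoke the Fredholm alternative so that existence and the a priori bound follow from uniqueness alone. First I would let $g\in H^{1}(\Omega)$ be the Riesz representative of the bounded antilinear form $\test\mapsto\overline{l(\test)}$, so that $\Vert g\Vert_{H^{1}(\Omega)}=\Vert l\Vert_{H^{-1}(\Omega)}$. Writing the left-hand side of (\ref{eq:=00005BMOD=00005D_Weak-Form-p}) as $(\pac,\test)_{H^{1}(\Omega)}+(s^{2}-1)(\pac,\test)+\frac{s}{z(s)}(\pac,\test)_{L^{2}(\partial\Omega)}$ and representing the last two terms through Riesz, one obtains $\opK=\opK(s)\in\spaceBounded(H^{1}(\Omega))$ defined by $(\opK\pac,\test)_{H^{1}(\Omega)}=(s^{2}-1)(\pac,\test)+\frac{s}{z(s)}(\pac,\test)_{L^{2}(\partial\Omega)}$, and (\ref{eq:=00005BMOD=00005D_Weak-Form-p}) becomes exactly $(\opId+\opK)\pac=g$.

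I would then check that $\opK$ is compact, which rests on two facts valid on a bounded Lipschitz set: the embedding $H^{1}(\Omega)\hookrightarrow L^{2}(\Omega)$ is compact (Rellich--Kondrachov), handling the interior term $(s^{2}-1)(\pac,\test)$; and the trace map $H^{1}(\Omega)\rightarrow L^{2}(\partial\Omega)$ is compact, as it factors through $H^{\frac{1}{2}}(\partial\Omega)\hookrightarrow L^{2}(\partial\Omega)$, handling the boundary term. With $\opK$ compact, the Fredholm alternative reduces bijectivity of $\opId+\opK$ to its injectivity, and the estimate $\Vert\pac\Vert_{H^{1}(\Omega)}\leq C(s)\Vert l\Vert_{H^{-1}(\Omega)}$ then comes for free from the bounded inverse theorem, with $C(s)=\Vert(\opId+\opK)^{-1}\Vert$.

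The crux is therefore uniqueness: assuming $\pac$ solves (\ref{eq:=00005BMOD=00005D_Weak-Form-p}) with $l=0$, I must show $\pac=0$. Taking $\test=\pac$, multiplying by $\overline{s}$ and taking real parts kills the purely imaginary contributions and yields
\begin{equation*}
\Re(s)\,\Vert\nabla\pac\Vert^{2}+\vert s\vert^{2}\Re(s)\,\Vert\pac\Vert^{2}+\vert s\vert^{2}\,\Re\!\left[\tfrac{1}{z(s)}\right]\Vert\pac\Vert_{L^{2}(\partial\Omega)}^{2}=0,
\end{equation*}
where $\Re[1/z(s)]=\Re[z(s)]/\vert z(s)\vert^{2}>0$ since $z(s)\in\spaceC_{0}^{+}$. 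If $\Re(s)>0$ all three terms are nonnegative, forcing $\pac=0$ at once (this case could equally be settled directly by Lax--Milgram, the identity above being a coercivity estimate after dividing by $\vert s\vert$). The remaining case $s=i\omega$, $\omega\in\spaceR^{*}$, is the genuine obstacle: only the boundary term survives, giving $\pac=0$ a.e. on $\partial\Omega$, i.e. $\pac\in H_{0}^{1}(\Omega)$. Equation (\ref{eq:=00005BMOD=00005D_Weak-Form-p}) with $l=0$ then loses its boundary contribution and reduces to $(\nabla\pac,\nabla\test)=\omega^{2}(\pac,\test)$ for all $\test\in H^{1}(\Omega)$, which is precisely the hypothesis of Proposition~\ref{prop:=00005BMOD=00005D_Rellich-Lemma} with $\lambda=\omega^{2}$; that proposition delivers $\pac=0$. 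This is exactly where the Rellich identity does the essential work, excluding a function that is simultaneously a Dirichlet eigenfunction and satisfies the natural Neumann condition. With injectivity established in both cases, the Fredholm alternative completes the proof.
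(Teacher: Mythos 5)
Your proposal is correct and follows the same overall architecture as the paper: Riesz representation to recast \eqref{eq:=00005BMOD=00005D_Weak-Form-p} as a compact perturbation of the identity, the Fredholm alternative to reduce everything to injectivity, the bounded-inverse (open mapping) theorem for the a priori estimate, and Proposition~\ref{prop:=00005BMOD=00005D_Rellich-Lemma} to rule out the residual eigenfunction on the imaginary axis. The one place where you genuinely diverge is the injectivity argument for $s\in\spaceC_{0}^{+}$. The paper handles this case by viewing the identity
\[
z(s)\Vert\nabla\pac\Vert^{2}+s^{2}z(s)\Vert\pac\Vert^{2}+s\Vert\pac\Vert_{L^{2}(\partial\Omega)}^{2}=0
\]
as the statement that $s$ is a root of a quadratic $s\mapsto za_{2}s^{2}+a_{1}s+za_{0}$ with nonnegative coefficients and $z\in\spaceC_{0}^{+}$, and devotes Lemma~\ref{lem:=00005BMOD=00005D_Polynomial-Root} to showing such roots cannot lie in the open right half-plane -- an explicit computation with a branch of the square root. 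Your alternative, multiplying the tested identity by $\overline{s}$ and taking real parts so that every coefficient becomes $\Re(s)$, $\vert s\vert^{2}\Re(s)$, or $\vert s\vert^{2}\Re[1/z(s)]$, all nonnegative with the middle one strictly positive, disposes of the same case in one line and simultaneously recovers the $s=i\omega$ reduction ($\pac=0$ on $\partial\Omega$, hence $\pac\in H_{0}^{1}(\Omega)$) as the degenerate instance $\Re(s)=0$ of the same identity; the paper instead separates real and imaginary parts after multiplying by $z(s)$, reaching the same conclusion. Your route is more economical and unifies the two cases; the paper's Lemma~\ref{lem:=00005BMOD=00005D_Polynomial-Root} is a self-contained statement about positive-real quadratics that could be reused elsewhere but is not needed here. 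A minor secondary difference: you obtain compactness of $\opK(s)$ by composing the continuous trace $H^{1}(\Omega)\rightarrow H^{\frac{1}{2}}(\partial\Omega)$ with the compact embedding $H^{\frac{1}{2}}(\partial\Omega)\subset L^{2}(\partial\Omega)$, whereas Lemma~\ref{lem:=00005BMOD=00005D_Weak-Form-p_K-compact} factors through the compact embedding $H^{1}(\Omega)\subset H^{\frac{1}{2}+\epsilon}(\Omega)$ followed by a continuous trace; both are valid on a bounded Lipschitz set.
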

\begin{rem}
Note that $s\mapsto z(s)$ need not be continuous, so that Theorem~\ref{thm:=00005BMOD=00005D_Weak-Form-p}
can be used pointwise, i.e. for only some $s\in\overline{\spaceC_{0}^{+}}\backslash\{0\}$.
\end{rem}

\begin{rem}[Intuition]
 Although the need for Theorem~\ref{thm:=00005BMOD=00005D_Weak-Form-p}
will appear in the proofs of the next sections, let us give a \emph{formal}
motivation for the formulation (\ref{eq:=00005BMOD=00005D_Weak-Form-p}).
Assume that $(\uac,\pac)$ is a smooth solution of (\ref{eq:=00005BMOD=00005D_Wave-Equation},\ref{eq:=00005BMOD=00005D_IBC}).
Then $\pac$ solves the wave equation
\[
\partial_{t}^{2}\pac-\Delta\pac=0\quad\text{on }\Omega,
\]
with the impedance boundary condition
\[
\partial_{t}\pac=z\star\partial_{t}\uac\cdot\normal=z\star(-\nabla\pac)\cdot\normal=-z\star\partial_{n}\pac\quad\text{on }\partial\Omega,
\]
where $\partial_{n}\pac$ denotes the normal derivative of $\pac$
and the causal kernel $z$ is, say, tempered and locally integrable.
An integration by parts with $\test\in H^{1}(\Omega)$ reads
\[
(\nabla\pac,\nabla\psi)+(\partial_{t}^{2}\pac,\psi)-(\partial_{n}\pac,\psi)_{L^{2}(\partial\Omega)}=0.
\]
The formulation (\ref{eq:=00005BMOD=00005D_Weak-Form-p}) then follows
from the application of the Laplace transform in time, which gives
$\widehat{z\star\partial_{n}\pac}(s)=\hat{z}(s)\partial_{n}\hat{\pac}(s)$
and $\widehat{\partial_{t}\pac}(s)=s\hat{\pac}(s)$ assuming that
$\pac(t=0)=0$ on $\partial\Omega$.\end{rem}
\begin{proof}[Proof for $s\in(0,\infty)$.]
 If $s\in(0,\infty)$ this is an immediate consequence of the Lax-Milgram
lemma \cite[Thm.~6.6]{lax2002funana}. Define the following bilinear
form over $H^{1}(\Omega)\times H^{1}(\Omega)$:
\[
\overline{a(\pac,\test)}\coloneqq(\nabla\pac,\nabla\test)+s^{2}(\pac,\test)+\frac{s}{z(s)}(\pac,\test)_{L^{2}(\partial\Omega)}.
\]
Its boundedness follows from the continuity of the trace $H^{1}(\Omega)\rightarrow L^{2}(\partial\Omega)$
(see Section~\ref{sub:=00005BMISC=00005D_Embedding-Trace}). The
fact that $z(s)>0$ gives
\[
a(\test,\test) \geq\min(1,s^{2})\Vert\test\Vert_{H^{1}(\Omega)}^{2},
\]
which establishes the coercivity of $a$.
\end{proof}

\begin{proof}
Let $s\in\overline{\spaceC_{0}^{+}}\backslash\{0\}$. The Lax-Milgram
lemma does not apply since the sign of $\Re(\overline{s}z(s))$ is
indefinite in general, but the Fredholm alternative is applicable.
Using the Riesz-Fr\'echet representation theorem \cite[Thm.~6.4]{lax2002funana},
(\ref{eq:=00005BMOD=00005D_Weak-Form-p}) can be rewritten uniquely
as
\begin{equation}
(\opId-\opK(s))\pac=L\quad\text{in }H^{1}(\Omega),\label{eq:=00005BMOD=00005D_Weak-Form-p_Compact}
\end{equation}
where $L\in H^{1}(\Omega)$ satisfies $\overline{l(\test)}=(L,\test)_{H^{1}(\Omega)}$
and the operator $\opK(s)\in\spaceBounded(H^{1}(\Omega))$ is given
by
\[
(\opK(s)\pac,\text{\ensuremath{\test}})_{H^{1}(\Omega)}\coloneqq(1-s^{2})(\pac,\test)-\frac{s}{z(s)}(\pac,\test)_{L^{2}(\partial\Omega)}.
\]
The interest of (\ref{eq:=00005BMOD=00005D_Weak-Form-p_Compact})
lies in the fact that $\opK(s)$ turns out to be a compact operator,
see Lemma~\ref{lem:=00005BMOD=00005D_Weak-Form-p_K-compact}. The
Fredholm alternative states that $\opId-\opK(s)$ is injective if
and only if it is surjective \cite[Thm.~6.6]{brezis2011fun}. Using
Lemma~\ref{lem:=00005BMOD=00005D_Weak-Form-p_K-eigenvalue} and the
open mapping theorem \cite[\S~II.5]{yosida1980funana}, we conclude
that $\opId-\opK(s)$ is a bijection with continuous inverse, which
yields the claimed well-posedness result.
\end{proof}

\begin{lem}
\label{lem:=00005BMOD=00005D_Weak-Form-p_K-compact}Let $s\in\overline{\spaceC_{0}^{+}}\backslash\{0\}$.
The operator $\opK(s)\in\spaceBounded(H^{1}(\Omega))$ is compact.
\begin{proof}
Let $\pac,\test\in H^{1}(\Omega)$. The Cauchy\textendash Schwarz
inequality and the continuity of the trace $H^{1}(\Omega)\rightarrow L^{2}(\partial\Omega)$
yield the existence of a constant $C>0$ such that
\[
\left|(\opK(s)\pac,\text{\ensuremath{\test}})_{H^{1}(\Omega)}\right|\leq\left(\left|1-s^{2}\right|\Vert\pac\Vert+C\left|\frac{s}{z(s)}\right|\Vert\pac\Vert_{L^{2}(\partial\Omega)}\right)\Vert\test\Vert_{H^{1}(\Omega)},
\]
from which we deduce
\[
\Vert\opK(s)\pac\Vert_{H^{1}(\Omega)}\leq\left|1-s^{2}\right|\Vert\pac\Vert+C\left|\frac{s}{z(s)}\right|\Vert\pac\Vert_{L^{2}(\partial\Omega)}.
\]
Let $\epsilon\in(0,\frac{1}{2})$. The continuous embedding $H^{\frac{1}{2}+\epsilon}(\Omega)\subset L^{2}(\Omega)$
and the continuity of the trace $H^{\frac{1}{2}+\epsilon}(\Omega)\rightarrow L^{2}(\partial\Omega)$,
see Section~\ref{sub:=00005BMISC=00005D_Embedding-Trace}, yield
\[
\Vert\opK(s)\pac\Vert_{H^{1}(\Omega)}\leq\left(\left|1-s^{2}\right|+C^{'}\left|\frac{s}{z(s)}\right|\right)\Vert\pac\Vert_{H^{\epsilon+\frac{1}{2}}(\Omega)}.
\]
The compactness of the embedding $H^{1}(\Omega)\subset H^{\frac{1}{2}+\epsilon}(\Omega)$,
see Section~\ref{sub:=00005BMISC=00005D_Embedding-Trace}, enables
to conclude.
\end{proof}
\end{lem}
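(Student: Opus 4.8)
The plan is to use the standard criterion that a bounded linear operator is compact as soon as it factors through a compact embedding; concretely, I would show that although $\opK(s)$ maps $H^{1}(\Omega)$ into itself, the $H^{1}(\Omega)$-norm of $\opK(s)\pac$ is controlled by a \emph{strictly weaker} Sobolev norm of $\pac$, and then invoke a compact Sobolev embedding.

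First I would exploit the defining identity of $\opK(s)$ together with the Riesz-Fr\'echet representation to estimate $\Vert\opK(s)\pac\Vert_{H^{1}(\Omega)}$. Testing against $\test$ with $\Vert\test\Vert_{H^{1}(\Omega)}=1$, the Cauchy\textendash Schwarz inequality and the continuity of the trace $H^{1}(\Omega)\rightarrow L^{2}(\partial\Omega)$ give
\[
\Vert\opK(s)\pac\Vert_{H^{1}(\Omega)}\leq\vert1-s^{2}\vert\,\Vert\pac\Vert+C\left\vert\frac{s}{z(s)}\right\vert\,\Vert\pac\Vert_{L^{2}(\partial\Omega)},
\]
so that the right-hand side involves only the $L^{2}(\Omega)$ norm and the boundary $L^{2}(\partial\Omega)$ norm of $\pac$. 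The key is to bound both terms by a single intermediate norm $\Vert\pac\Vert_{H^{1/2+\epsilon}(\Omega)}$ for some $\epsilon\in(0,\tfrac{1}{2})$: the volume term is handled by the continuous embedding $H^{1/2+\epsilon}(\Omega)\subset L^{2}(\Omega)$, and the boundary term by the continuity of the trace $H^{1/2+\epsilon}(\Omega)\rightarrow L^{2}(\partial\Omega)$.

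The main obstacle lies precisely in this last trace estimate on a merely Lipschitz boundary, where smooth trace theory is unavailable. I would rely on the sharp Sobolev trace theorem for Lipschitz domains, valid for $\tfrac{1}{2}<1/2+\epsilon<\tfrac{3}{2}$, which forces the restriction $\epsilon<\tfrac{1}{2}$ and is where the Lipschitz regularity of $\partial\Omega$ enters; it yields $\Vert\pac\Vert_{L^{2}(\partial\Omega)}\leq C'\Vert\pac\Vert_{H^{1/2+\epsilon}(\Omega)}$. Combining the two embeddings produces $\Vert\opK(s)\pac\Vert_{H^{1}(\Omega)}\leq C''(s)\,\Vert\pac\Vert_{H^{1/2+\epsilon}(\Omega)}$. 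Since the embedding $H^{1}(\Omega)\subset H^{1/2+\epsilon}(\Omega)$ is compact on a bounded Lipschitz set, $\opK(s)$ is the composition of this compact embedding with a bounded operator and hence maps bounded subsets of $H^{1}(\Omega)$ into relatively compact sets, which is the claimed compactness. As a conceptual check, one may equivalently write $\opK(s)=(1-s^{2})\,j^{*}j-\frac{s}{z(s)}\,\gamma^{*}\gamma$, where $j$ and $\gamma$ denote the embedding $H^{1}(\Omega)\rightarrow L^{2}(\Omega)$ and the trace $H^{1}(\Omega)\rightarrow L^{2}(\partial\Omega)$; both being compact on a bounded Lipschitz set, compactness of $\opK(s)$ follows as a linear combination of compact operators.
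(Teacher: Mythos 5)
Your proposal is correct and follows essentially the same route as the paper's proof: the same Cauchy--Schwarz/trace estimate, the same passage through the intermediate norm $\Vert\pac\Vert_{H^{1/2+\epsilon}(\Omega)}$ via the continuous embedding into $L^{2}(\Omega)$ and the Lipschitz trace theorem, and the same conclusion from the compactness of $H^{1}(\Omega)\subset H^{1/2+\epsilon}(\Omega)$. The closing remark factoring $\opK(s)$ through the compact operators $j$ and $\gamma$ is a harmless reformulation of the same argument.
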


\begin{lem}
\label{lem:=00005BMOD=00005D_Weak-Form-p_K-eigenvalue}Let $s\in\overline{\spaceC_{0}^{+}}\backslash\{0\}$.
The operator $\opId-\opK(s)$ is injective.
\begin{proof}
Assume that $\opId-\opK(s)$ is not injective. Then there exists $\pac\in H^{1}(\Omega)\backslash\{0\}$
such that $\opK(s)\pac=\pac$, i.e. for any $\test\in H^{1}(\Omega)$,
\begin{equation}
(\nabla\pac,\nabla\text{\ensuremath{\test}})+s^{2}(\pac,\test)+\frac{s}{z(s)}(\pac,\test)_{L^{2}(\partial\Omega)}=0.\label{eq:=00005BIBC=00005D_Weak-Form-p_K-eigenvalue_1}
\end{equation}
In particular, for $\test=\pac$,
\begin{equation}
z(s)\Vert\nabla\pac\Vert^{2}+s^{2}z(s)\Vert\pac\Vert^{2}+s\Vert\pac\Vert_{L^{2}(\partial\Omega)}^{2}=0.\label{eq:=00005BIBC=00005D_Weak-Form-p_K-eigenvalue_2}
\end{equation}
To derive a contradiction, we distinguish between $s\in\spaceC_{0}^{+}$
and $s\in i\spaceR^{*}$.\\
($s\in\spaceC_{0}^{+}$) This is a direct consequence of Lemma~\ref{lem:=00005BMOD=00005D_Polynomial-Root}.\\
($s\in i\spaceR^{*}$) Let $s=i\omega$ with $\omega\in\spaceR^{*}$.
Then (\ref{eq:=00005BIBC=00005D_Weak-Form-p_K-eigenvalue_2}) reads
\[
\begin{cases}
\Re(z(i\omega))\left(\Vert\nabla\pac\Vert^{2}-\omega^{2}\Vert\pac\Vert^{2}\right)=0\\
\Im(z(i\omega))\left(\Vert\nabla\pac\Vert^{2}-\omega^{2}\Vert\pac\Vert^{2}\right)+\omega\Vert\pac\Vert_{L^{2}(\partial\Omega)}^{2}=0,
\end{cases}
\]
so that $\pac\in H_{0}^{1}(\Omega)$. Going back to the first identity
(\ref{eq:=00005BIBC=00005D_Weak-Form-p_K-eigenvalue_1}), we therefore
have
\[
\forall\test\in H^{1}(\Omega),\;(\nabla\pac,\nabla\text{\ensuremath{\test}})=\omega^{2}(\pac,\test).
\]
The contradiction then follows from Proposition~\ref{prop:=00005BMOD=00005D_Rellich-Lemma}.
\end{proof}
\end{lem}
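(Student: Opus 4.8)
The plan is to argue by contradiction. Suppose $\opId-\opK(s)$ fails to be injective; then there is a nonzero $\pac\in H^{1}(\Omega)$ with $\opK(s)\pac=\pac$, which by the definition of $\opK(s)$ means that $(\nabla\pac,\nabla\test)+s^{2}(\pac,\test)+\frac{s}{z(s)}(\pac,\test)_{L^{2}(\partial\Omega)}=0$ for every $\test\in H^{1}(\Omega)$. First I would test against $\test=\pac$ and clear the denominator $z(s)$ (legitimate, since the standing hypothesis $z:\overline{\spaceC_{0}^{+}}\backslash\{0\}\to\spaceC_{0}^{+}$ forces $z(s)\neq0$), obtaining the scalar identity $z(s)\Vert\nabla\pac\Vert^{2}+s^{2}z(s)\Vert\pac\Vert^{2}+s\Vert\pac\Vert_{L^{2}(\partial\Omega)}^{2}=0$. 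I would then extract a contradiction by exploiting the positivity built into the hypothesis $z(s)\in\spaceC_{0}^{+}$, treating $s\in\spaceC_{0}^{+}$ and $s\in i\spaceR^{*}$ separately.

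For $s\in\spaceC_{0}^{+}$ I would keep the undivided form $\Vert\nabla\pac\Vert^{2}+s^{2}\Vert\pac\Vert^{2}+\frac{s}{z(s)}\Vert\pac\Vert_{L^{2}(\partial\Omega)}^{2}=0$ and multiply through by $\overline{s}$. Taking real parts and using $\overline{s}s^{2}=\vert s\vert^{2}s$ yields $\Re(s)\Vert\nabla\pac\Vert^{2}+\vert s\vert^{2}\Re(s)\Vert\pac\Vert^{2}+\vert s\vert^{2}\Re(1/z(s))\Vert\pac\Vert_{L^{2}(\partial\Omega)}^{2}=0$. Since $\Re(s)>0$ and $\Re(1/z(s))=\Re(z(s))/\vert z(s)\vert^{2}>0$, all three summands carry strictly positive coefficients, so they must vanish individually; in particular $\Vert\pac\Vert=0$, contradicting $\pac\neq0$.

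For $s=i\omega$ with $\omega\in\spaceR^{*}$ I would return to $z(i\omega)(\Vert\nabla\pac\Vert^{2}-\omega^{2}\Vert\pac\Vert^{2})+i\omega\Vert\pac\Vert_{L^{2}(\partial\Omega)}^{2}=0$ and separate real and imaginary parts. As $i\omega\Vert\pac\Vert_{L^{2}(\partial\Omega)}^{2}$ is purely imaginary, the real part reads $\Re(z(i\omega))(\Vert\nabla\pac\Vert^{2}-\omega^{2}\Vert\pac\Vert^{2})=0$, and $\Re(z(i\omega))>0$ forces $\Vert\nabla\pac\Vert^{2}=\omega^{2}\Vert\pac\Vert^{2}$; the imaginary part then collapses to $\omega\Vert\pac\Vert_{L^{2}(\partial\Omega)}^{2}=0$, so $\pac$ has vanishing trace, i.e.\ $\pac\in H_{0}^{1}(\Omega)$. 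The variational identity reduces to $(\nabla\pac,\nabla\test)=\omega^{2}(\pac,\test)$ for all $\test\in H^{1}(\Omega)$, which is exactly the hypothesis of Proposition~\ref{prop:=00005BMOD=00005D_Rellich-Lemma} with $\lambda=\omega^{2}$, whence $\pac=0$ and we reach a contradiction.

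The case $s\in\spaceC_{0}^{+}$ is routine: the single device of multiplying by $\overline{s}$ and reading off the real part settles it. I expect the genuine difficulty to lie in the purely imaginary case, where the quadratic form degenerates and positivity alone is insufficient; there the argument must detour through the fact that the would-be eigenfunction is simultaneously a Dirichlet eigenfunction (via its vanishing trace) and satisfies the Neumann-type weak equation, a configuration excluded by the Rellich identity. Thus invoking Proposition~\ref{prop:=00005BMOD=00005D_Rellich-Lemma} cleanly is the crux of the proof.
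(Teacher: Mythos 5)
Your proof is correct, and on the purely imaginary axis it coincides with the paper's argument (separate real and imaginary parts, deduce $\pac\in H_{0}^{1}(\Omega)$, conclude via Proposition~\ref{prop:=00005BMOD=00005D_Rellich-Lemma}). Where you genuinely depart from the paper is the case $s\in\spaceC_{0}^{+}$: the paper reads (\ref{eq:=00005BIBC=00005D_Weak-Form-p_K-eigenvalue_2}) as saying that $s$ is a root of the quadratic $\zeta\mapsto z(s)a_{2}\zeta^{2}+a_{1}\zeta+z(s)a_{0}$ with $a_{2}=\Vert\pac\Vert^{2}$, $a_{1}=\Vert\pac\Vert_{L^{2}(\partial\Omega)}^{2}$, $a_{0}=\Vert\nabla\pac\Vert^{2}$, and excludes roots in the open right half-plane via the standalone Lemma~\ref{lem:=00005BMOD=00005D_Polynomial-Root}, whose proof goes through explicit root formulas, a branch of the square root, and a continuity argument. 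Your device --- multiply the identity $\Vert\nabla\pac\Vert^{2}+s^{2}\Vert\pac\Vert^{2}+\frac{s}{z(s)}\Vert\pac\Vert_{L^{2}(\partial\Omega)}^{2}=0$ by $\overline{s}$ and take real parts, so that the three nonnegative quantities appear with the strictly positive coefficients $\Re(s)$, $\vert s\vert^{2}\Re(s)$ and $\vert s\vert^{2}\Re(1/z(s))$ --- reaches the same conclusion in one line, handles the degenerate subcases (where some of the norms vanish) uniformly rather than by separate inspection, and makes Lemma~\ref{lem:=00005BMOD=00005D_Polynomial-Root} unnecessary for this purpose. The trade-off is minor: the paper's polynomial lemma is a self-contained statement about positive-real structure that could in principle be reused, but within this proof your computation is the more economical route.
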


\begin{lem}
\label{lem:=00005BMOD=00005D_Polynomial-Root}Let $(a_{0},a_{1},a_{2})\in[0,\infty)^{3}$
and $z\in\spaceC_{0}^{+}$. The polynomial $s\mapsto za_{2}\,s^{2}+a_{1}\,s+za_{0}$
has no roots in $\spaceC_{0}^{+}$.\end{lem}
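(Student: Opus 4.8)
The plan is to argue by contradiction: I would suppose that $s\in\spaceC_{0}^{+}$ is a root, i.e. $za_{2}s^{2}+a_{1}s+za_{0}=0$, and show that this forces $a_{0}=a_{1}=a_{2}=0$. This gives the lemma for any nontrivial triple of coefficients, which is all that is needed in the application (Lemma~\ref{lem:=00005BMOD=00005D_Weak-Form-p_K-eigenvalue}, where $a_{2}=\Vert\pac\Vert^{2}>0$ because $\pac\neq0$). Since $\Re(s)>0$ we have $s\neq0$, so the first step is to divide the defining equation by $s$ and collect the two terms carrying the factor $z$, rewriting it as $z\,w+a_{1}=0$ with $w\coloneqq a_{2}s+a_{0}/s$.

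The key observation is that $w$ has nonnegative real part. Writing $1/s=\bar{s}/|s|^{2}$ gives $\Re(w)=\Re(s)\left(a_{2}+a_{0}/|s|^{2}\right)$, which is $\geq0$ because $\Re(s)>0$ and $a_{0},a_{2}\geq0$. On the other hand, the relation $z\,w=-a_{1}$ shows that $z\,w$ is a nonpositive real number. I would then multiply this identity by $\bar{z}$ and take real parts, obtaining $|z|^{2}\,\Re(w)=-a_{1}\,\Re(z)$. The left-hand side is $\geq0$, while the right-hand side is $\leq0$ (since $a_{1}\geq0$ and $\Re(z)>0$), so both sides must vanish. From $a_{1}\Re(z)=0$ with $\Re(z)>0$ I conclude $a_{1}=0$, and from $\Re(w)=0$ together with $\Re(s)>0$ I conclude $a_{2}+a_{0}/|s|^{2}=0$, hence $a_{0}=a_{2}=0$. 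This contradicts the nontriviality of the polynomial.

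The argument is short, and I do not expect a serious obstacle once the right normalization is found. The only genuinely delicate point is bookkeeping the degenerate case: what the sign comparison really proves is that a root in $\spaceC_{0}^{+}$ can exist \emph{only} for the zero polynomial, so the statement should be read for $(a_{0},a_{1},a_{2})\neq(0,0,0)$. The one place where a bit of ingenuity is required is the choice to divide by $s$ (to expose the quantity $w$ whose real part is controlled by $\Re(s)>0$) and then to pair the equation with $\bar{z}$ rather than with $\bar{s}$; after that the proof reduces to a one-line comparison of signs. An alternative that avoids dividing by $s$ would be to multiply the original equation by $\bar{s}$ and again pass to $|z|^{2}$, but the division route keeps the expressions cleanest.
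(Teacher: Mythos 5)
Your proof is correct, and it takes a genuinely different route from the paper. The paper computes the two roots explicitly via the quadratic formula, writes $\Re(s_{\pm})$ in terms of $f_{\pm}(z)=\Re\bigl[\overline{z}\bigl(-1\pm\sqrt{1-\gamma z^{2}}\bigr)\bigr]$, and then runs a connectedness argument: $f_{\pm}$ is continuous on $\spaceC_{0}^{+}$ minus the branch cut of the square root, vanishes only on $i\spaceR$, and is negative at the sample point $z=1/\sqrt{2\gamma}$, hence negative throughout. It also restricts attention to the case $a_{i}>0$ for all $i$, leaving the partially degenerate cases implicit. Your argument --- divide by $s\neq0$ to expose $w=a_{2}s+a_{0}/s$ with $\Re(w)=\Re(s)\bigl(a_{2}+a_{0}/\vert s\vert^{2}\bigr)\geq0$, then pair $zw=-a_{1}$ with $\overline{z}$ and compare signs in $\vert z\vert^{2}\Re(w)=-a_{1}\Re(z)$ --- avoids the quadratic formula, the branch-cut bookkeeping, and the case analysis entirely, and treats every nontrivial coefficient triple uniformly. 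It is essentially the positivity argument underlying positive-real functions (the map $s\mapsto a_{2}s+a_{0}/s$ is positive-real, so $z\cdot(a_{2}s+a_{0}/s)$ cannot equal the nonpositive number $-a_{1}$ unless everything degenerates), and it is shorter and more robust than the paper's computation. You are also right to flag that the statement as literally written fails for $(a_{0},a_{1},a_{2})=(0,0,0)$ (the zero polynomial vanishes everywhere); the paper silently ignores this, and in the application one indeed has $a_{2}=\Vert\pac\Vert^{2}>0$, so your explicit caveat is a genuine improvement in precision rather than a defect.
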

\begin{proof}
The only case that needs investigating is $a_{i}>0$ for $i\in\llbracket0,2\rrbracket$.
Let us denote by $\sqrt{\cdot}$ the branch of the square root that
has a nonnegative real part, with a cut on $(-\infty,0]$ (i.e. $\sqrt{\cdot}$
is analytic over $\spaceC\backslash(-\infty,0]$). The roots are given
by
\[
s_{\pm}\coloneqq\frac{a_{1}\overline{z}}{2a_{2}\vert z\vert^{2}}\left(-1\pm\sqrt{1-\gamma z^{2}}\right)
\]
with $\gamma\coloneqq4\frac{a_{0}a_{2}}{a_{1}^{2}}>0$ so that
\[
\Re(s_{\pm})=\frac{a_{1}}{2a_{2}\vert z\vert^{2}}f_{\pm}(z)\quad\text{with}\quad f_{\pm}(z)\coloneqq\Re\left[\overline{z}\left(-1\pm\sqrt{1-\gamma z^{2}}\right)\right].
\]
The function $f_{\pm}$ is continuous on $\spaceC_{0}^{+}\backslash[\gamma^{-\nicefrac{1}{2}},\infty)$
(but not analytic) and vanishes only on $i\spaceR$ (if $f_{\pm}(z)=0$,
then there is $\omega\in\spaceR$ such that $2\omega z=i\left(\omega^{2}-\gamma\vert z\vert^{4}\right)$).
The claim therefore follows from
\[
f_{\pm}\left(\frac{1}{\sqrt{2\gamma}}\right)=\frac{-\sqrt{2}\pm1}{2\sqrt{\gamma}}<0.
\]

\end{proof}

In view of Theorem~\ref{thm:=00005BMOD=00005D_Weak-Form-p}, in the
remainder of this paper, we make the following assumption on the set
$\Omega$.
\begin{assumption}
The set $\Omega\subset\spaceR^{d}$, $d\in\llbracket1,\infty\llbracket$,
is a bounded open set with a Lipschitz boundary.
\end{assumption}

\section{Delay impedance\label{sec:=00005BDELAY=00005D_Delay-impedance}}

This section, as well as Sections~\ref{sec:=00005BDIFF=00005D_Standard-diffusive-impedance}
and \ref{sec:=00005BEXTDIFF=00005D_Extended-diffusive-impedance},
deals with IBCs that have an \emph{infinite}-dimensional realization,
which arise naturally in physical modeling \cite{monteghetti2016diffusive}.
Let us first consider the time-delayed impedance
\begin{equation}
\hat{z}(s)\coloneqq z_{0}+z_{\tau}e^{-\tau s},\label{eq:=00005BDELAY=00005D_Laplace}
\end{equation}
where $z_{0},z_{\tau},\tau\in\spaceR$, so that the corresponding
IBC (\ref{eq:=00005BMOD=00005D_IBC}) reads
\begin{equation}
\pac(t)=z_{0}\uac(t)\cdot\normal+z_{\tau}\uac(t-\tau)\cdot\normal\quad\text{a.e. on \ensuremath{\partial\Omega}},\;t>0.\label{eq:=00005BDELAY=00005D_IBC-Temp}
\end{equation}
The function (\ref{eq:=00005BDELAY=00005D_Laplace}) is positive-real
if and only if
\begin{equation}
z_{0}\geq\vert z_{\tau}\vert,\;\tau\geq0,\label{eq:=00005BDELAY=00005D_PR-Condition}
\end{equation}
which is assumed in the following. From now on, in addition to (\ref{eq:=00005BDELAY=00005D_PR-Condition}),
we further assume
\[
\hat{z}(0)\neq0,\;\tau\neq0.
\]
This section is organized as follows: a realization of $\hat{z}$
is recalled in Section~\ref{sub:=00005BDELAY=00005D_Time-delay-realization}
and the stability of the coupled system is shown in Section~\ref{sub:=00005BDELAY=00005D_Asymptotic-stability}.
\begin{rem}
In \cite{nicaise2006stability}, exponential (resp. asymptotic) stability
is shown under the condition $z_{0}>z_{\tau}>0$ (resp. $z_{0}\geq z_{\tau}>0$)
and $\tau>0$.
\end{rem}

\begin{rem}
The case of a (memoryless) proportional impedance $\hat{z}(s)\coloneqq z_{0}$
with $z_{0}>0$ is elementary (it is known that exponential stability
is achieved \cite{chen1981note,lagnese1983decay,komornik1990direct})
and can be covered by the strategy detailed in Section~\ref{sub:=00005BSTAB=00005D_Strategy}
without using an extended state space \cite[\S\,4.2.2]{monteghetti2018dissertation}.
\end{rem}

\subsection{Time-delay realization\label{sub:=00005BDELAY=00005D_Time-delay-realization}}

Following a well-known device, time-delays can be realized using a
transport equation on a bounded interval \cite[\S~2.4]{curtainewart1995infinitedim}
\cite[\S~VI.6]{engel2000semigroup}. Let $u$ be a causal input. The
linear time-invariant operator $u\mapsto z\star u$ can be realized
as
\[
z\star u(t)=z_{0}u(t)+z_{\tau}\delay(t,-\tau)\quad(t>0),
\]
where the state $\delay\in H^{1}(-\tau,0)$ with $t\geq0$ follows
the transport equation
\begin{equation}
\left\{ \begin{aligned} & \partial_{t}\delay(t,\theta)=\partial_{\theta}\delay(t,\theta), & \quad & \left(\theta\in(-\tau,0),\;t>0\right), &  & \text{(a)}\\
 & \delay(0,\theta)=0, &  & \left(\theta\in(-\tau,0)\right), &  & \text{(b)}\\
 & \delay(t,0)=u(t), &  & \left(t>0\right). &  & \text{(c)}
\end{aligned}
\right.\label{eq:=00005BDELAY=00005D_Transport-Equation}
\end{equation}
For $\delay\in\spaceContinuous^{1}([0,T];H^{1}(-\tau,0))$ solution
of (\ref{eq:=00005BDELAY=00005D_Transport-Equation}a), we have the
following energy balance
\begin{alignat*}{1}
\frac{1}{2}\frac{\dinf}{\dinf t}\Vert\delay(t,\cdot)\Vert_{L^{2}(-\tau,0)}^{2} & =\Re(\partial_{\theta}\delay(t,\cdot),\delay(t,\cdot))_{L^{2}(-\tau,0)}\\
 & =\frac{1}{2}\left[\vert\delay(t,0)\vert^{2}-\vert\delay(t,-\tau)\vert^{2}\right],
\end{alignat*}
which we shall use in the proof of Lemma~\ref{lem:=00005BDELAY=00005D_Dissipativity}.
\begin{rem}[Multiple delays]
Note that a finite number of time-delays $\tau_{i}>0$ can be accounted
for by setting $\tau\coloneqq\max_{i}\tau_{i}$ and writing
\[
z\star u(t)=z_{0}u(t)+\sum_{i}z_{\tau_{i}}\delay(t,-\tau_{i}).
\]
The corresponding impedance $\hat{z}(s)=z_{0}+\sum_{i}z_{\tau_{i}}e^{-\tau_{i}s}$
is positive-real if $z_{0}\geq\sum_{i}\vert z_{\tau_{i}}\vert$. No
substantial change to the proofs of Section~\ref{sub:=00005BDELAY=00005D_Asymptotic-stability}
is required to handle this case. In \cite{nicaise2006stability},
asymptotic stability is proven under the condition $z_{0}\geq\sum_{i}z_{i}>0$ and $z_i>0$.
\end{rem}

\subsection{Asymptotic stability\label{sub:=00005BDELAY=00005D_Asymptotic-stability}}

Let
\[
H_{\opDiv}(\Omega)\coloneqq\left\{ \uac\in L^{2}(\Omega)^{d}\;\vert\;\opDiv\uac\in L^{2}(\Omega)\right\} .
\]
The state space is defined as
\begin{equation}
\begin{gathered}\spaceState\coloneqq\nabla H^{1}(\Omega)\times L^{2}(\Omega)\times L^{2}(\partial\Omega;L^{2}(-\tau,0)),\\
((\uac,\pac,\delay),(\srcuac,\srcpac,\srcdelay))_{\spaceState}\coloneqq(\uac,\srcuac)+(\pac,\srcpac)+k(\delay,\srcdelay)_{L^{2}(\partial\Omega;L^{2}(-\tau,0))},
\end{gathered}
\label{eq:=00005BDELAY=00005D_Scalar-Prod}
\end{equation}
where $k\in\spaceR$ is a constant to be tuned to achieve dissipativity,
see Lemma~\ref{lem:=00005BDELAY=00005D_Dissipativity}. The evolution
operator is defined as
\begin{equation}
\begin{gathered}\spaceDomain\ni\state\coloneqq\left(\begin{array}{c}
\uac\\
\pac\\
\delay
\end{array}\right)\longmapsto\opA\state\coloneqq\left(\begin{array}{c}
-\nabla\pac\\
-\opDiv\uac\\
\partial_{\theta}\delay
\end{array}\right),\\
\spaceDomain\coloneqq\left\{ (\uac,\pac,\delay)\in\spaceState\;\left|\;\begin{alignedat}{1} & (\uac,\pac)\in H_{\opDiv}(\Omega)\times H^{1}(\Omega)\\
 & \delay\in L^{2}(\partial\Omega;H^{1}(-\tau,0))\\
 & \pac=z_{0}\uac\cdot\normal+z_{\tau}\delay(\cdot,-\tau)\;\text{in }H^{-\frac{1}{2}}(\partial\Omega)\\
 & \delay(\cdot,0)=\uac\cdot\normal\;\text{in }H^{-\frac{1}{2}}(\partial\Omega)
\end{alignedat}
\right.\right\} .
\end{gathered}
\label{eq:=00005BDELAY=00005D_Definition-A}
\end{equation}
In this formulation, the IBC (\ref{eq:=00005BDELAY=00005D_IBC-Temp})
is the third equation in $\spaceDomain$. We apply Corollary~\ref{cor:=00005BSTAB=00005D_Asymptotic-Stability},
see the Lemmas~\ref{lem:=00005BDELAY=00005D_Dissipativity}, \ref{lem:=00005BDELAY=00005D_Injectivity},
and \ref{lem:=00005BDELAY=00005D_Bijectivity} below. Lemma~\ref{lem:=00005BDELAY=00005D_Dissipativity}
shows that the seemingly free parameter $k$ must be restricted for
$\Vert\cdot\Vert_{\spaceState}$ to be a Lyapunov functional, as formally
highlighted in \cite{monteghetti2017delay}.
\begin{rem}[Bochner's integral]
\label{rem:=00005BDELAY=00005D_Bochner-Integration} For the integrability
of vector-valued functions, we follow the definitions and results
presented in \cite[\S~V.5]{yosida1980funana}. Let $\mathcal{B}$
be a Banach space. We have \cite[Thm.~V.5.1]{yosida1980funana}
\[
L^{2}(\partial\Omega;\mathcal{B})=\left\{ f:\,\partial\Omega\rightarrow\mathcal{B}\;\text{strongly measurable}\,\left|\,\Vert f\Vert_{\mathcal{B}}\in L^{2}(\partial\Omega)\right.\right\} .
\]
In Sections~\ref{sec:=00005BDIFF=00005D_Standard-diffusive-impedance}
and \ref{sec:=00005BEXTDIFF=00005D_Extended-diffusive-impedance},
we repeatedly use the following result: if $A\in\spaceBounded(\mathcal{B}_{1},\mathcal{B}_{2})$
and $u\in L^{2}(\partial\Omega;\mathcal{B}_{1})$, then $Au\in L^{2}(\partial\Omega;\mathcal{B}_{2})$.
\end{rem}

\begin{rem}
\label{rem:|Delay=00005D_Exclusion-Solenoidal-Fields}Since
$\nabla H^{1}(\Omega)$ is a closed subspace of $L^{2}(\Omega)^{d}$,
$\spaceState$ is a Hilbert space, see Section~\ref{sub:=00005BMISC=00005D_Hodge-decomposition}
for some background. In view of the orthogonal decomposition (\ref{eq:=00005BPRE=00005D_Hodge-Decomposition}),
working with $\nabla H^{1}\left(\Omega\right)$ instead of $L^{2}\left(\Omega\right)^{d}$
enables to get an injective evolution operator $\opA$. The exclusion
of the solenoidal fields $\uac$ that belong to $H_{\opDiv0,0}(\Omega)$
from the domain of $\opA$ can be physically justified by the fact
that these fields are non-propagating and are not affected by the
IBC.\end{rem}
\begin{lem}
\label{lem:=00005BDELAY=00005D_Dissipativity}The operator $\opA$
given by (\ref{eq:=00005BDELAY=00005D_Definition-A}) is dissipative
if and only if
\[
k\in\left[z_{0}-\sqrt{z_{0}^{2}-z_{\tau}^{2}},z_{0}+\sqrt{z_{0}^{2}-z_{\tau}^{2}}\right].
\]
\end{lem}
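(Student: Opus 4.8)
The plan is to compute $\Re(\opA\state,\state)_\spaceState$ explicitly for a generic $\state=(\uac,\pac,\delay)\in\spaceDomain$ and to reduce the dissipativity inequality to the sign of an elementary quadratic form in two boundary traces. Splitting the scalar product (\ref{eq:=00005BDELAY=00005D_Scalar-Prod}) into its three pieces, I would first treat the wave part $(-\nabla\pac,\uac)+(-\opDiv\uac,\pac)$ with a Green's formula valid for $\uac\in H_{\opDiv}(\Omega)$ and $\pac\in H^1(\Omega)$: the interior contribution is of the form $a-\overline a$, hence purely imaginary, and using the $L^2$-pivot identity (\ref{eq:=00005BPRE=00005D_L2-Pivot-Space}) only the boundary term survives, giving $\Re[(-\nabla\pac,\uac)+(-\opDiv\uac,\pac)]=-\Re(\uac\cdot\normal,\pac)_{L^2(\partial\Omega)}$, consistent with (\ref{eq:=00005BMOD=00005D_Energy-Estimate}). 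For the memory part I would integrate the energy balance of Section~\ref{sub:=00005BDELAY=00005D_Time-delay-realization} over $\partial\Omega$, which yields $\Re\,k(\partial_\theta\delay,\delay)_{L^2(\partial\Omega;L^2(-\tau,0))}=\tfrac{k}{2}[\Vert\delay(\cdot,0)\Vert^2_{L^2(\partial\Omega)}-\Vert\delay(\cdot,-\tau)\Vert^2_{L^2(\partial\Omega)}]$.

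Next, substituting the two boundary constraints from the domain (\ref{eq:=00005BDELAY=00005D_Definition-A}), namely $\pac=z_0\,\uac\cdot\normal+z_\tau\,\delay(\cdot,-\tau)$ and $\delay(\cdot,0)=\uac\cdot\normal$, and abbreviating $v\coloneqq\uac\cdot\normal$, $w\coloneqq\delay(\cdot,-\tau)\in L^2(\partial\Omega)$, the three contributions combine into
\[
\Re(\opA\state,\state)_\spaceState=\Bigl(\tfrac{k}{2}-z_0\Bigr)\Vert v\Vert^2-z_\tau\,\Re(v,w)-\tfrac{k}{2}\Vert w\Vert^2,
\]
all norms and products taken in $L^2(\partial\Omega)$. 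I would then reduce this to a scalar problem by setting $a=\Vert v\Vert$, $b=\Vert w\Vert$ and invoking Cauchy--Schwarz $|\Re(v,w)|\le ab$, which is sharp when $v$ and $w$ are real-collinear; the extremal value of the right-hand side over the admissible $\Re(v,w)$ is $(\tfrac{k}{2}-z_0)a^2+|z_\tau|ab-\tfrac{k}{2}b^2$.

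For the \emph{if} direction, note that on the stated interval one has $\tfrac{k}{2}-z_0\le 0$ and $-\tfrac{k}{2}\le0$ (both follow from $\sqrt{z_0^2-z_\tau^2}\le z_0$), so the homogeneous quadratic in the ratio $t=a/b$ is a downward parabola whose maximiser is nonnegative; requiring its maximum to be $\le0$ is exactly the discriminant inequality $z_\tau^2\le 2kz_0-k^2$, i.e. $(k-z_0)^2\le z_0^2-z_\tau^2$, which is the asserted range for $k$. For the \emph{only if} direction I would show that when $k$ lies outside the interval the supremum of the quadratic form is strictly positive and is attained by an admissible state: pick a nonzero smooth $w$ on $\partial\Omega$, set $v=\mu w$ with $\mu\in\spaceR$ equal to the optimal ratio and of the sign making $-z_\tau\Re(v,w)=|z_\tau|\Vert v\Vert\Vert w\Vert$, then assemble $\delay$ by affine interpolation in $\theta$ between $v$ and $w$, recover $\uac=\nabla\phi\in\nabla H^1(\Omega)\cap H_{\opDiv}(\Omega)$ by solving a Neumann problem $\Delta\phi=g$, $\partial_n\phi=v$ with $g$ chosen to meet the compatibility condition, and recover $\pac\in H^1(\Omega)$ with trace $z_0v+z_\tau w$ by trace surjectivity.

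The step I expect to be the main obstacle is this last realizability check: one must verify that the worst-case pair $(v,w)$ genuinely comes from an element of $\spaceDomain$, i.e. that the normal trace is onto $H^{-\frac12}(\partial\Omega)$, that the Neumann problem is solvable within $\nabla H^1(\Omega)$, and that $v,w$ can be taken regular enough (say in $H^{\frac12}(\partial\Omega)$) for the IBC equalities to make sense while still producing $\Re(\opA\state,\state)>0$. By density of $H^{\frac12}(\partial\Omega)$ in $L^2(\partial\Omega)$ and continuity of the quadratic form, the optimum over realizable states coincides with the unconstrained one, which closes the equivalence.
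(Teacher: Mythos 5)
Your proposal is correct and follows essentially the same route as the paper: Green's formula plus the transport energy balance reduce $\Re(\opA\state,\state)_{\spaceState}$ to the quadratic form $(\tfrac{k}{2}-z_0)\Vert v\Vert^2 - z_\tau\Re(v,w)-\tfrac{k}{2}\Vert w\Vert^2$ in the traces $v=\uac\cdot\normal=\delay(\cdot,0)$ and $w=\delay(\cdot,-\tau)$, and the admissible range of $k$ is read off from the discriminant condition $z_\tau^2\le 2kz_0-k^2$, which is exactly the paper's criterion that the associated $2\times 2$ real symmetric matrix be positive semidefinite. The only point where you go beyond the paper is the ``only if'' direction, where you explicitly realize the worst-case collinear pair $(v,w)$ by an element of $\spaceDomain$ (Neumann problem for $\uac$, affine interpolation in $\theta$ for $\delay$, trace surjectivity for $\pac$); the paper leaves this realizability implicit, so this is added care rather than a different method.
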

\begin{proof}
Let $\state\in\spaceDomain$. In particular, $\uac\cdot\normal\in L^{2}(\partial\Omega)$
since $\delay(\cdot,0)\in L^{2}(\partial\Omega)$. Using Green's formula
(\ref{eq:=00005BPRE=00005D_Green-Formula})
\begin{alignat*}{1}
\Re(\opA\state,\state)_{\spaceState}= & -\Re\left[\langle\uac\cdot\normal,\overline{\pac}\rangle_{H^{-\frac{1}{2}}(\partial\Omega),H^{\frac{1}{2}}(\partial\Omega)}\right]+k\,\Re\left(\partial_{\theta}\delay,\delay\right)_{L^{2}(\partial\Omega;L^{2}(-\tau,0))}\\
= & -\Re\left[(\uac\cdot\normal,\pac)_{L^{2}(\partial\Omega)}\right]+\frac{k}{2}\Vert\delay(\cdot,0)\Vert_{L^{2}(\partial\Omega)}^{2}-\frac{k}{2}\Vert\delay(\cdot,-\tau)\Vert_{L^{2}(\partial\Omega)}^{2},\\
= & \left(\frac{k}{2}-z_{0}\right)\Vert\delay(\cdot,0)\Vert_{L^{2}(\partial\Omega)}^{2}-\frac{k}{2}\Vert\delay(\cdot,-\tau)\Vert_{L^{2}(\partial\Omega)}^{2}\\
 & -z_{\tau}\Re\left[(\delay(\cdot,0),\delay(\cdot,-\tau))_{L^{2}(\partial\Omega)}\right],
\end{alignat*}
from which we deduce that $\opA$ is dissipative if and only if the
matrix
\[
\left[\begin{array}{cc}
z_{0}-\frac{k}{2} & \frac{z_{\tau}}{2}\\
\frac{z_{\tau}}{2} & \frac{k}{2}
\end{array}\right]
\]
is positive semidefinite, i.e. if and only if its determinant and
trace are nonnegative:
\[
(2z_{0}-k)k\geq z_{\tau}^{2}\quad\text{and}\quad z_{0}\geq0.
\]
The conclusion follows the expressions of the roots of $k\mapsto-k^{2}+2z_{0}k-z_{\tau}^{2}$.\end{proof}
\begin{lem}
\label{lem:=00005BDELAY=00005D_Injectivity}The operator $\opA$ given
by (\ref{eq:=00005BDELAY=00005D_Definition-A}) is injective. \end{lem}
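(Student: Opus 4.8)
The plan is to show directly that $\opKer(\opA)=\{\vector 0\}$. Suppose $\state=(\uac,\pac,\delay)\in\spaceDomain$ satisfies $\opA\state=\vector 0$. By the definition of $\opA$ in (\ref{eq:=00005BDELAY=00005D_Definition-A}) this is equivalent to the three identities $\nabla\pac=\vector 0$, $\opDiv\uac=0$, and $\partial_\theta\delay=0$. The last one, read in $L^2(\partial\Omega;L^2(-\tau,0))$ together with $\delay\in L^2(\partial\Omega;H^1(-\tau,0))$, shows that for almost every $\coordx\in\partial\Omega$ the profile $\delay(\coordx,\cdot)$ is constant in $\theta$; combined with the two trace conditions of $\spaceDomain$, namely $\delay(\cdot,0)=\uac\cdot\normal$ and $\pac=z_0\uac\cdot\normal+z_\tau\delay(\cdot,-\tau)$, this collapses the delayed boundary condition to $\pac=(z_0+z_\tau)\,\uac\cdot\normal=\hat{z}(0)\,\uac\cdot\normal$ on $\partial\Omega$. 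As in the proof of Lemma~\ref{lem:=00005BDELAY=00005D_Dissipativity}, I note that $\uac\cdot\normal\in L^2(\partial\Omega)$ because $\delay(\cdot,0)\in L^2(\partial\Omega)$, so the boundary pairings below are genuine $L^2(\partial\Omega)$ inner products.

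The crux is to exploit $\hat{z}(0)\neq0$ through an energy identity. I would apply Green's formula (\ref{eq:=00005BPRE=00005D_Green-Formula}) with the test function $\pac$: since both $\nabla\pac=\vector 0$ and $\opDiv\uac=0$, the volume terms vanish and only the boundary pairing survives, giving $\langle\uac\cdot\normal,\overline{\pac}\rangle_{H^{-\frac{1}{2}}(\partial\Omega),H^{\frac{1}{2}}(\partial\Omega)}=0$. Using the pivot identity (\ref{eq:=00005BPRE=00005D_L2-Pivot-Space}) and substituting the collapsed boundary condition (and recalling $\hat{z}(0)=z_0+z_\tau\in\spaceR$), this reads $\hat{z}(0)\Vert\uac\cdot\normal\Vert_{L^2(\partial\Omega)}^{2}=0$, whence $\uac\cdot\normal=0$ on $\partial\Omega$ since $\hat{z}(0)\neq0$. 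Feeding this back into the boundary condition yields $\pac=0$ on $\partial\Omega$; as $\pac\in H^1(\Omega)$ has vanishing trace it lies in $H_0^1(\Omega)$, and $\nabla\pac=\vector 0$ then forces $\pac=0$ by the Poincaré inequality.

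It remains to recover $\uac$ and $\delay$. Here the choice of $\nabla H^1(\Omega)$ as the first factor of $\spaceState$ (rather than $L^2(\Omega)^d$) is essential, as anticipated in Remark~\ref{rem:|Delay=00005D_Exclusion-Solenoidal-Fields}: I would write $\uac=\nabla q$ with $q\in H^1(\Omega)$ and apply Green's formula once more with $\test=q$. Since $\opDiv\uac=0$ and $\uac\cdot\normal=0$, this gives $\Vert\uac\Vert^{2}=(\uac,\nabla q)=\langle\uac\cdot\normal,\overline{q}\rangle_{H^{-\frac{1}{2}}(\partial\Omega),H^{\frac{1}{2}}(\partial\Omega)}=0$, so $\uac=\vector 0$; a general solenoidal field with vanishing normal trace could not be ruled out in this way, which is precisely why such fields are excluded from the domain of $\opA$. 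Finally $\delay(\cdot,0)=\uac\cdot\normal=0$ together with the $\theta$-independence of $\delay$ gives $\delay=0$, hence $\state=\vector 0$. The only delicate point I foresee is the careful bookkeeping of the boundary duality brackets in the two applications of Green's formula, together with the verification that $\uac\cdot\normal\in L^2(\partial\Omega)$, so that the pairing against $\pac$ can legitimately be turned into a squared $L^2(\partial\Omega)$ norm on which the hypothesis $\hat{z}(0)\neq0$ can bite.
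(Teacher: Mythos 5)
Your proof is correct and follows essentially the same route as the paper: collapse the realization to $\pac=\hat{z}(0)\,\uac\cdot\normal$, use Green's formula to get $\hat{z}(0)\Vert\uac\cdot\normal\Vert_{L^{2}(\partial\Omega)}^{2}=0$, and conclude. The only cosmetic difference is that you re-derive the orthogonality $\nabla H^{1}(\Omega)\cap H_{\opDiv0,0}(\Omega)=\{0\}$ by hand with a second application of Green's formula, where the paper simply invokes the decomposition (\ref{eq:=00005BPRE=00005D_Hodge-Decomposition}).
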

\begin{proof}
Assume $\state\in\spaceDomain$ satisfies $\opA\state=0$, i.e. $\nabla\pac=\vector 0$,
$\opDiv\uac=0$, and
\begin{equation}
\partial_{\theta}\delay(\coordx,\theta)=0\quad\text{a.e. in }\partial\Omega\times(-\tau,0).\label{eq:=00005BDELAY=00005D_Injectivity-1}
\end{equation}
Hence $\delay(\coordx,\cdot)$ is constant with
\begin{equation}
\delay(\cdot,0)=\delay(\cdot,-\tau)=\uac\cdot\normal\quad\text{a.e. in }\mbox{\ensuremath{\partial\Omega}}.\label{eq:=00005BDELAY=00005D_Injectivity-2}
\end{equation}
Green's formula (\ref{eq:=00005BPRE=00005D_Green-Formula}) yields
\[
\langle\uac\cdot\normal,\overline{\pac}\rangle_{H^{-\frac{1}{2}}(\partial\Omega),H^{\frac{1}{2}}(\partial\Omega)}=0,
\]
and by combining with the IBC (i.e. the third equation
in $\spaceDomain$) and (\ref{eq:=00005BDELAY=00005D_Injectivity-2})
\[
\hat{z}(0)\Vert\uac\cdot\normal\Vert_{L^{2}(\partial\Omega)}^{2}=0,
\]
where we have used that $\uac\cdot\normal\in L^{2}(\partial\Omega)$
since $\delay(\cdot,0)\in L^{2}(\partial\Omega)$. Since $\hat{z}(0)\neq0$
we deduce that $\uac\in H_{\opDiv0,0}(\Omega)$, hence $\uac=\vector 0$
from (\ref{eq:=00005BPRE=00005D_Hodge-Decomposition}) and $\delay=0$.
The IBC gives $\pac=0$ a.e. on $\partial\Omega$, hence $\pac=0$
a.e. on $\Omega$.\end{proof}
\begin{lem}
\label{lem:=00005BDELAY=00005D_Bijectivity}Let $\opA$ be given by
(\ref{eq:=00005BDELAY=00005D_Definition-A}). Then, $s\opId-\opA$
is bijective for $s\in(0,\infty)\cup i\spaceR^{*}$.\end{lem}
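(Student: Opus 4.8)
The plan is to prove that, for each such $s$, the resolvent equation $(s\opId-\opA)\state=\srcstate$ has a unique solution $\state=(\uac,\pac,\delay)\in\spaceDomain$ depending continuously on $\srcstate=(\srcuac,\srcpac,\srcdelay)\in\spaceState$, by reducing it to the scalar variational problem of Theorem~\ref{thm:=00005BMOD=00005D_Weak-Form-p}. Componentwise the equation reads $s\uac+\nabla\pac=\srcuac$, $s\pac+\opDiv\uac=\srcpac$, and $s\delay-\partial_{\theta}\delay=\srcdelay$. First I would integrate the transport equation in $\theta$ for a.e. $\coordx\in\partial\Omega$: this linear first-order ODE, together with the domain condition $\delay(\cdot,0)=\uac\cdot\normal$, determines $\delay$ explicitly from $\uac\cdot\normal$ and $\srcdelay$, with in particular $\delay(\cdot,-\tau)=e^{-s\tau}\,\uac\cdot\normal+g$, where $g\in L^{2}(\partial\Omega)$ is built from $\srcdelay$ (its strong measurability and integrability following from Remark~\ref{rem:=00005BDELAY=00005D_Bochner-Integration}). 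Feeding this into the IBC collapses the two boundary conditions of $\spaceDomain$ into the single effective relation $\pac=\hat{z}(s)\,\uac\cdot\normal+z_{\tau}g$ on $\partial\Omega$, with $\hat{z}(s)=z_{0}+z_{\tau}e^{-\tau s}$.

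Next I would eliminate the velocity through $\uac=\frac{1}{s}(\srcuac-\nabla\pac)$, which automatically belongs to $\nabla H^{1}(\Omega)$ because $\srcuac$ does (this is precisely why the state space uses $\nabla H^{1}(\Omega)$, cf. Remark~\ref{rem:|Delay=00005D_Exclusion-Solenoidal-Fields}), and which has $\opDiv\uac=\srcpac-s\pac\in L^{2}(\Omega)$, so that $\uac\in H_{\opDiv}(\Omega)$. Testing $s\pac+\opDiv\uac=\srcpac$ against $\test\in H^{1}(\Omega)$, integrating by parts via Green's formula, and substituting $\uac\cdot\normal=\frac{1}{\hat{z}(s)}(\pac-z_{\tau}g)$ from the effective relation, I expect to reach exactly the variational problem
\[
(\nabla\pac,\nabla\test)+s^{2}(\pac,\test)+\frac{s}{\hat{z}(s)}(\pac,\test)_{L^{2}(\partial\Omega)}=\overline{l(\test)},
\]
where $l$ gathers the data $\srcuac,\srcpac,g$ and defines an element of $H^{-1}(\Omega)$ (the term $(\srcuac,\nabla\test)$ being controlled since $\srcuac\in L^{2}(\Omega)$, and the boundary term by continuity of the trace). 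This is the hypothesis of Theorem~\ref{thm:=00005BMOD=00005D_Weak-Form-p} with $z(s)=\hat{z}(s)$, which then delivers a unique $\pac\in H^{1}(\Omega)$. I would then reconstruct $\uac$ and $\delay$ and verify $\state\in\spaceDomain$ by reading off the natural (Robin-type) boundary condition and the identity $\delay(\cdot,0)=\uac\cdot\normal$ from the variational equality; since every step is reversible, bijectivity of the scalar problem is equivalent to that of $s\opId-\opA$. For $s\in(0,\infty)$, uniqueness is moreover immediate from dissipativity, Lemma~\ref{lem:=00005BDELAY=00005D_Dissipativity}.

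The crux is verifying that $\hat{z}$ meets the hypotheses of Theorem~\ref{thm:=00005BMOD=00005D_Weak-Form-p} on all of $(0,\infty)\cup i\spaceR^{*}$, i.e. that $\hat{z}(s)\in\spaceC_{0}^{+}$ there. For $s\in(0,\infty)$ this is immediate, since $\hat{z}(s)$ is real with $\hat{z}(s)\geq z_{0}-\vert z_{\tau}\vert e^{-\tau s}>z_{0}-\vert z_{\tau}\vert\geq0$, so the Lax--Milgram part of Theorem~\ref{thm:=00005BMOD=00005D_Weak-Form-p} applies. The hard part is the imaginary axis $s=i\omega$, where $\Re\,\hat{z}(i\omega)=z_{0}+z_{\tau}\cos(\tau\omega)$ is only nonnegative. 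When $z_{0}>\vert z_{\tau}\vert$ one has $\Re\,\hat{z}(i\omega)\geq z_{0}-\vert z_{\tau}\vert>0$, the Fredholm part of Theorem~\ref{thm:=00005BMOD=00005D_Weak-Form-p} applies verbatim, and injectivity ultimately rests on the Rellich argument of Proposition~\ref{prop:=00005BMOD=00005D_Rellich-Lemma}. I expect the genuine obstacle to be the borderline $z_{0}=\vert z_{\tau}\vert$ (which, under $\hat{z}(0)\neq0$, forces $z_{\tau}=z_{0}>0$): then $\hat{z}(i\omega)$ vanishes at the resonant frequencies $\tau\omega\in\pi+2\pi\spaceZ$, the effective relation degenerates into the Dirichlet condition $\pac=z_{\tau}g$, and Theorem~\ref{thm:=00005BMOD=00005D_Weak-Form-p} no longer applies as a black box. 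At those frequencies the reduced equation becomes a Dirichlet Helmholtz problem $-\Delta\pac-\omega^{2}\pac=s\srcpac-\opDiv\srcuac$, whose solvability must be handled by a separate argument tied to $\omega^{2}$ and the Dirichlet spectrum, again via Proposition~\ref{prop:=00005BMOD=00005D_Rellich-Lemma}; this is where I anticipate the delicate part of the proof to lie.
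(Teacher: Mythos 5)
Your reduction is exactly the one the paper uses: solve the transport equation in $\theta$ to express $\delay$ through $\uac\cdot\normal$ and $\srcdelay$, collapse the two boundary conditions of $\spaceDomain$ into the effective relation $\pac=\hat{z}(s)\uac\cdot\normal+z_{\tau}g$, eliminate the velocity via $\uac=\frac{1}{s}(\srcuac-\nabla\pac)$, and land on the variational problem of Theorem~\ref{thm:=00005BMOD=00005D_Weak-Form-p}; the reconstruction of $\uac$ and $\delay$ and the uniqueness step you sketch are also those of the paper. Whenever $\hat{z}(i\omega)\in\spaceC_{0}^{+}$ for all $\omega\neq0$ --- that is, whenever $z_{0}>\vert z_{\tau}\vert$ --- your argument is complete and coincides with the paper's proof.

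The ``delicate part'' you flag in the borderline case $z_{0}=\vert z_{\tau}\vert$ is real, but it is not something a further solvability argument can repair: the statement itself fails there. Take $\Omega=(0,\pi)$, $z_{\tau}=z_{0}>0$, $\tau=\pi$, and $s=i$. Then $\hat{z}(i)=z_{0}(1+e^{-i\pi})=0$, and the triple $\pac=\sin x$, $\uac=i\cos x$, $\delay(\cdot,\theta)=e^{i\theta}\,\uac\cdot\normal$ belongs to $\spaceDomain$ (both boundary conditions in $\spaceDomain$ reduce to $0=0$, because $\pac$ vanishes on $\partial\Omega$ and $\hat{z}(i)=0$) and satisfies $\opA\state=i\state$, so $i\opId-\opA$ is not injective. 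This is precisely the resonance you anticipate between the zeros of $\hat{z}$ on $i\spaceR$ ($\tau\omega\in\pi+2\pi\spaceZ$) and the spectrum of the Dirichlet Laplacian ($\omega^{2}$ an eigenvalue); Proposition~\ref{prop:=00005BMOD=00005D_Rellich-Lemma} cannot rescue the situation because it only excludes functions that are simultaneously Dirichlet \emph{and} Neumann eigenfunctions, and a genuine Dirichlet eigenfunction with nonzero normal derivative satisfies the IBC as soon as $\hat{z}(i\omega)=0$. You should also be aware that the paper's own proof carries the same unacknowledged hole: it divides by $\hat{z}(s)$ and invokes Theorem~\ref{thm:=00005BMOD=00005D_Weak-Form-p}, whose hypothesis $\hat{z}(s)\in\spaceC_{0}^{+}$ is verified on all of $i\spaceR^{*}$ only when $z_{0}>\vert z_{\tau}\vert$. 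So your instinct to isolate this sub-case was correct, but the right conclusion is that the lemma needs the strict inequality $z_{0}>\vert z_{\tau}\vert$ (or an explicit exclusion of the resonant frequencies), not a separate existence argument at those frequencies.
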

\begin{proof}
Let $\srcstate\in\spaceState$ and $s\in(0,\infty)\cup i\spaceR^{*}$.
We seek a \emph{unique} $\state\in\spaceDomain$ such that $(s\opId-\opA)\state=\srcstate$,
i.e.
\begin{equation}
\begin{cases}
s\uac+\nabla\pac=\srcuac & \text{(a)}\\
s\pac+\opDiv\uac=\srcpac & \text{(b)}\\
s\delay-\partial_{\theta}\delay=\srcdelay. & \text{(c)}
\end{cases}\label{eq:=00005BDELAY=00005D_Bijectivity-1}
\end{equation}
The proof, as well as the similar ones found in the
next sections, proceeds in three steps.

(a) As a preliminary step, let us \emph{assume} that
(\ref{eq:=00005BDELAY=00005D_Bijectivity-1}) holds with $\state\in\spaceDomain$.
Equation (\ref{eq:=00005BDELAY=00005D_Bijectivity-1}c) can be uniquely
solved as
\begin{equation}
\delay(\cdot,\theta)=e^{s\theta}\uac\cdot\normal+R(s,\partial_{\theta})\srcdelay(\cdot,\theta),\label{eq:=00005BDELAY=00005D_Bijectivity-Delay}
\end{equation}
where we denote
\[
R(s,\partial_{\theta})\srcdelay(\coordx,\theta)\coloneqq\left[Y_{1}e^{s\cdot}\star\srcdelay(\coordx,\cdot)\right](\theta)=\int_{0}^{\theta}e^{s(\theta-\tilde{\theta})}\srcdelay(\coordx,\tilde{\theta})\,\dinf\tilde{\theta}.
\]
We emphasize that, in the remaining of the proof, $R(s,\partial_{\theta})$
is merely a convenient notation: the operator ``$\partial_{\theta}$''
cannot be defined independently from $\opA$ (see Remark~\ref{rem:=00005BDelay=00005D_Formal-Resolvent-Operator}
for a detailed explanation).

The IBC (i.e. the third equation in $\spaceDomain$)
can then be written as
\begin{equation}
\pac=\hat{z}(s)\uac\cdot\normal+z_{\tau}R(s,\partial_{\theta})\srcdelay(\cdot,-\tau)\quad\text{in }H^{-\frac{1}{2}}(\partial\Omega),\label{eq:=00005BDELAY=00005D_Bijectivity-IBC}
\end{equation}
and this identity actually takes place in $L^{2}(\partial\Omega)$
since
\[
\coordx\mapsto R(s,\partial_{\theta})\srcdelay(\coordx,-\tau)\in L^{2}(\partial\Omega).
\]
Let $\test\in H^{1}(\Omega)$. Combining $(\srcuac,\nabla\test)+s(\srcpac,\test)$
with (\ref{eq:=00005BDELAY=00005D_Bijectivity-IBC}) yields
\begin{equation}
\begin{alignedat}{1}(\nabla\pac,\nabla\test)+s^{2}(\pac,\test)+\frac{s}{\hat{z}(s)}(\pac,\test)_{L^{2}(\partial\Omega)}= & (\srcuac,\nabla\test)+s(\srcpac,\test)\\
 & +\frac{sz_{\tau}}{\hat{z}(s)}(R(s,\partial_{\theta})\srcdelay(\cdot,-\tau),\test)_{L^{2}(\partial\Omega)}.
\end{alignedat}
\label{eq:=00005BDELAY=00005D_Bijectivity-Weakp}
\end{equation}

In summary, $(s\opId-\opA)\state=\srcstate$ with $\state\in\spaceDomain$
implies (\ref{eq:=00005BDELAY=00005D_Bijectivity-Weakp}).

(b) We now construct a state $\state\in\spaceDomain$
such that $(s\opId-\opA)\state=\srcstate$. To do so, we use the conclusion
from the preliminary step (a).

Let $\pac\in H^{1}(\Omega)$ be the unique solution of (\ref{eq:=00005BDELAY=00005D_Bijectivity-Weakp})
obtained with Theorem~\ref{thm:=00005BMOD=00005D_Weak-Form-p}. It
remains to find suitable $\uac$ and $\delay$ so that $(\uac,\pac,\delay)\in\spaceDomain$.
Let us \emph{define} $\uac\in\nabla H^{1}(\Omega)$ by (\ref{eq:=00005BDELAY=00005D_Bijectivity-1}a).
Taking $\test\in\spaceContinuous_{0}^{\infty}(\Omega)$ in (\ref{eq:=00005BDELAY=00005D_Bijectivity-Weakp})
shows that $\uac\in H_{\opDiv}(\Omega)$ with (\ref{eq:=00005BDELAY=00005D_Bijectivity-1}b).
Using the expressions of $\uac$ and $\opDiv\uac$, and Green's formula
(\ref{eq:=00005BPRE=00005D_Green-Formula}), the weak formulation
(\ref{eq:=00005BDELAY=00005D_Bijectivity-Weakp}) can be rewritten
as
\[
\langle\uac\cdot\normal,\overline{\test}\rangle_{H^{-\frac{1}{2}}(\partial\Omega),H^{\frac{1}{2}}(\partial\Omega)}=\frac{1}{\hat{z}(s)}(\pac,\test)_{L^{2}(\partial\Omega)}-\frac{z_{\tau}}{\hat{z}(s)}(R(s,\partial_{\theta})\srcdelay(\cdot,-\tau),\test)_{L^{2}(\partial\Omega)}.
\]
which shows that $\pac$ and $\uac$ satisfy (\ref{eq:=00005BDELAY=00005D_Bijectivity-IBC}).
Let us now \emph{define} $\delay$ in $L^{2}(\partial\Omega;H^{1}(-\tau,0))$
by (\ref{eq:=00005BDELAY=00005D_Bijectivity-Delay}). By rewriting
(\ref{eq:=00005BDELAY=00005D_Bijectivity-IBC}) as
\[
\pac=(\hat{z}(s)-z_{\tau}e^{-s\tau})\uac\cdot\normal+z_{\tau}\left(e^{-s\tau}\uac\cdot\normal+R(s,\partial_{\theta})\srcdelay(\cdot,-\tau)\right)\quad\text{in }H^{-\frac{1}{2}}(\partial\Omega),
\]
we deduce thanks to (\ref{eq:=00005BDELAY=00005D_Laplace}) and (\ref{eq:=00005BDELAY=00005D_Bijectivity-Delay})
that the IBC holds, i.e. that $(\uac,\pac,\delay)\in\spaceDomain$.

(c) We now show the uniqueness in $\spaceDomain$ of
a solution of (\ref{eq:=00005BDELAY=00005D_Bijectivity-1}). The
uniqueness of $\pac$ in $H^{1}\left(\Omega\right)$
follows from Theorem~\ref{thm:=00005BMOD=00005D_Weak-Form-p}.
Although $\uac$ is not unique in $H_{\opDiv}(\Omega)$,
it is unique in $H_{\opDiv}(\Omega)\cap\nabla H^{1}\left(\Omega\right)$
following (\ref{eq:=00005BPRE=00005D_Hodge-Decomposition}). The uniqueness
of $\delay$ follows from the fact that (\ref{eq:=00005BDELAY=00005D_Bijectivity-1}c)
 is uniquely solvable in $\spaceDomain$.\end{proof}
\begin{rem}
\label{rem:=00005BDelay=00005D_Formal-Resolvent-Operator}In the proof,
$R(s,\partial_{\theta})$ is only a notation since $\partial_{\theta}$
(hence also its resolvent operator) cannot be defined separately from
$\opA$. Indeed, the definition of $\partial_{\theta}$ would be
\[
\left|\begin{alignedat}{1}\partial_{\theta}:\, & \spaceD(\partial_{\theta})\subset L^{2}(\partial\Omega;L^{2}(-\tau,0))\rightarrow L^{2}(\partial\Omega;L^{2}(-\tau,0))\\
 & \delay\mapsto\partial_{\theta}\delay,
\end{alignedat}
\right.
\]
with domain
\[
\spaceD(\partial_{\theta})=\left\{ \delay\in L^{2}(\partial\Omega;H^{1}(-\tau,0))\;\vert\;\delay(\cdot,0)=\uac\cdot\normal\right\}
\]
that depends upon $\uac$.
\end{rem}

\section{Standard diffusive impedance\label{sec:=00005BDIFF=00005D_Standard-diffusive-impedance}}

This section focuses on the class of so-called \emph{standard diffusive}
kernels \cite{montseny1998diffusive}, defined as
\begin{equation}
z(t)\coloneqq\int_{0}^{\infty}e^{-\xi t}Y_{1}(t)\,\dinf\mu(\xi),\label{eq:=00005BDIFF=00005D_Time-Standard-Diffusive}
\end{equation}
where $t\in\spaceR$ and $\mu$ is a positive Radon measure on $[0,\infty)$
that satisfies the following well-posedness condition
\begin{equation}
\int_{0}^{\infty}\frac{\dinf\mu(\xi)}{1+\xi}<\infty,\label{eq:=00005BDIFF=00005D_Well-posedness-Condition}
\end{equation}
which guarantees that $z\in L_{\textup{loc}}^{1}([0,\infty))$ with
Laplace transform
\begin{equation}
\hat{z}(s)=\int_{0}^{\infty}\frac{1}{s+\xi}\dinf\mu(\xi).\label{eq:=00005BDIFF=00005D_Laplace-Standard-Diffusive}
\end{equation}
The estimate
\begin{equation}
\forall s\in\overline{\spaceC_{0}^{+}}\backslash\{0\},\quad\frac{1}{\vert s+\xi\vert}\leq\sqrt{2}\max\left[1,\frac{1}{\vert s\vert}\right]\frac{1}{1+\xi},\label{eq:=00005BDIFF=00005D_FirstOrder-Estimate}
\end{equation}
which is used below, shows that $\hat{z}$ is defined on $\overline{\spaceC_{0}^{+}}\backslash\{0\}$.

This class of (positive-real) kernels is physically linked to non-propagating
lossy phenomena and arise in electromagnetics \cite{garrappa2016models},
viscoelasticity \cite{desch1988exponential,mainardi1997frac}, and
acoustics \cite{helie2006diffusive,lombard2016fractional,monteghetti2016diffusive}.
Formally, $\hat{z}$ admits the following realization
\begin{equation}
\left\{ \begin{alignedat}{1} & \vphantom{\int}\partial_{t}\diff(t,\xi)=-\xi\diff(t,\xi)+u(t),\;\diff(0,\xi)=0\quad\left(\xi\in(0,\infty)\right),\\
 & z\star u(t)=\int_{0}^{\infty}\diff(t,\xi)\,\dinf\mu(\xi).
\end{alignedat}
\right.\label{eq:=00005BDIFF=00005D_Diffusive-Realization}
\end{equation}
The realization (\ref{eq:=00005BDIFF=00005D_Diffusive-Realization})
can be given a meaning using the theory of well-posed linear systems
\cite{weiss2001wellposed,staffans2005well,matignon2010diffusivewp,tucsnak2014wellposed}.
 However, in order to prove asymptotic
stability, we need a framework to give a meaning to the \emph{coupled}
system (\ref{eq:=00005BMOD=00005D_Wave-Equation},\ref{eq:=00005BMOD=00005D_IBC},\ref{eq:=00005BDIFF=00005D_Diffusive-Realization}),
which, it turns out, can be done without defining a well-posed linear
system out of (\ref{eq:=00005BDIFF=00005D_Diffusive-Realization}).

Similarly to the previous sections, this section is divided into two
parts. Section~\ref{sub:=00005BDIFF=00005D_Abstract-realization}
defines the realization of (\ref{eq:=00005BDIFF=00005D_Diffusive-Realization})
and establishes some of its properties. These properties are then
used in Section~\ref{sub:=00005BDIFF=00005D_Asymptotic-stability}
to prove asymptotic stability of the coupled system.
\begin{rem}
The typical standard diffusive operator is the Riemann-Liouville fractional
integral \cite[\S~2.3]{samko1993fractional} \cite{matignon2008introduction}
\begin{equation}
\hat{z}(s)=\frac{1}{s^{\alpha}},\;\dinf\mu(\xi)=\frac{\sin(\alpha\pi)}{\pi}\frac{1}{\xi^{\alpha}}\dinf\xi,\label{eq:=00005BDIFF=00005D_Diffusive-Weight-Fractional}
\end{equation}
where $\alpha\in(0,1)$.
\end{rem}

\begin{rem}
\label{rem:=00005BDIFF=00005D_Completely-Monotonic}The expression
(\ref{eq:=00005BDIFF=00005D_Time-Standard-Diffusive}) arises naturally
when inverting multivalued Laplace transforms, see \cite[Chap.~4]{duffy2004transform}
for applications in partial differential equations. However, a standard
diffusive kernel can also be defined as follows: a causal kernel $z$
is said to be \emph{standard diffusive} if it belongs to $L_{\text{loc}}^{1}([0,\infty))$
and is completely monotone on $(0,\infty)$. By Bernstein's representation
theorem \cite[Thm.~5.2.5]{gripenberg1990volterra}, $z$ is standard
diffusive iff (\ref{eq:=00005BDIFF=00005D_Time-Standard-Diffusive},\ref{eq:=00005BDIFF=00005D_Well-posedness-Condition})
hold. Additionally, a standard diffusive kernel $z$ is integrable
on $(0,\infty)$ iff
\[
\mu(\{0\})=0\quad\text{and}\quad\int_{0}^{\infty}\frac{1}{\xi}\dinf\mu(\xi)<\infty,
\]
a property which will be referred to in Section~\ref{sub:=00005BDIFF=00005D_Abstract-realization}.
State spaces for the realization of classes of completely monotone
kernels have been studied in \cite{desch1988exponential,staffans1994well}.
\end{rem}

\subsection{Abstract realization\label{sub:=00005BDIFF=00005D_Abstract-realization}}

To give a meaning to (\ref{eq:=00005BDIFF=00005D_Diffusive-Realization})
suited for our purpose, we define, for any $s\in\spaceR$, the following
Hilbert space
\[
V_{s}\coloneqq\left\{ \diff:\,(0,\infty)\rightarrow\spaceC\text{ measurable}\;\left|\;\int_{0}^{\infty}\vert\diff(\xi)\vert^{2}(1+\xi)^{s}\,\dinf\mu(\xi)<\infty\right.\right\} ,
\]
with scalar product
\[
(\diff,\test)_{V_{s}}\coloneqq\int_{0}^{\infty}(\diff(\xi),\test(\xi))_{\spaceC}(1+\xi)^{s}\,\dinf\mu(\xi),
\]
so that the triplet $(\spaceDiff_{-1},\spaceDiff_{0},\spaceDiff_{1})$
satisfies the continuous embeddings
\begin{equation}
\spaceDiff_{1}\subset\spaceDiff_{0}\subset\spaceDiff_{-1}.\label{eq:=00005BDIFF=00005D_Triplet-Embedding}
\end{equation}
The space $\spaceDiff_{0}$ will be the energy space of the realization,
see (\ref{eq:=00005BDIFF=00005D_State-Space}). Note that the spaces
$\spaceDiff_{-1}$ and $\spaceDiff_{1}$ defined above are different
from those encountered when defining a well-posed linear system out
of (\ref{eq:=00005BDIFF=00005D_Diffusive-Realization}), see \cite{matignon2010diffusivewp}.
 When $\dinf\mu$ is given
by (\ref{eq:=00005BDIFF=00005D_Diffusive-Weight-Fractional}), the
spaces $\spaceDiff_{0}$ and $\spaceDiff_{1}$ reduce to the spaces
``$H_{\alpha}$'' and ``$V_{\alpha}$'' defined in \cite[\S~3.2]{matignon2014asymptotic}.

On these spaces, we wish to define the unbounded state operator $A$,
the control operator $B$, and the observation operator $C$ so that
\begin{equation}
A:\,\spaceD(A)\coloneqq\spaceDiff_{1}\subset\spaceDiff_{-1}\rightarrow\spaceDiff_{-1},\;B\in\spaceBounded(\spaceC,\spaceDiff_{-1}),\;C\in\spaceBounded(\spaceDiff_{1},\spaceC).\label{eq:=00005BDIFF=00005D_ABC-Definition}
\end{equation}
The state operator is defined as the following multiplication operator
\begin{equation}
\begin{alignedat}{1} & A:\\
\\
\end{alignedat}
\left|\begin{alignedat}{1} & \spaceD\left(A\right)\coloneqq\spaceDiff_{1}\subset\spaceDiff_{-1}\rightarrow\spaceDiff_{-1}\\
 & \diff\mapsto(\xi\mapsto-\xi\diff(\xi)).
\end{alignedat}
\right.\label{eq:=00005BDIFF=00005D_Multiplication-Operator-A}
\end{equation}
The control operator is simply
\begin{equation}
Bu\coloneqq\xi\mapsto u,\label{eq:=00005BDIFF=00005D_Application-Definition-B}
\end{equation}
and belongs to $\spaceBounded(\spaceC,\spaceDiff_{-1})$ thanks to
the condition (\ref{eq:=00005BDIFF=00005D_Well-posedness-Condition})
since, for $u\in\spaceC$,
\[
\Vert Bu\Vert_{\spaceDiff_{-1}}=\left[\int_{0}^{\infty}\frac{1}{1+\xi}\,\dinf\mu(\xi)\right]^{\nicefrac{1}{2}}\vert u\vert.
\]
The observation operator is
\[
C\diff\coloneqq\int_{0}^{\infty}\diff(\xi)\,\dinf\mu(\xi),
\]
and $C\in\spaceBounded(\spaceDiff_{1},\spaceC)$ thanks to (\ref{eq:=00005BDIFF=00005D_Well-posedness-Condition})
as, for $\diff\in\spaceDiff_{1}$,
\[
\vert C\diff\vert\leq\left[\int_{0}^{\infty}\frac{1}{1+\xi}\,\dinf\mu(\xi)\right]^{\nicefrac{1}{2}}\Vert\diff\Vert_{\spaceDiff_{1}}.
\]
The next lemma gathers properties of the triplet $(A,B,C)$ that are
used in Section~\ref{sub:=00005BDIFF=00005D_Asymptotic-stability}
to obtain asymptotic stability. Recall that if $A$ is closed and
$s\in\rho(A)$, then the resolvent operator $R(s,A)$ defined by (\ref{eq:=00005BSTAB=00005D_Resolvent-Operator})
belongs to $\spaceBounded(\spaceDiff_{-1},\spaceDiff_{1})$ \cite[\S~III.6.1]{kato1995ope}.
\begin{lem}
\label{lem:=00005BDIFF=00005D_Multiplication-Operator-A}The operator
$A$ defined by (\ref{eq:=00005BDIFF=00005D_Multiplication-Operator-A})
is injective, generates a strongly continuous semigroup of contractions
on $\spaceDiff_{-1}$, and satisfies $\overline{\spaceC_{0}^{+}}\backslash\{0\}\subset\rho(A)$. \end{lem}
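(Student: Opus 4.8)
The plan is to exploit that $A$ is a multiplication operator, so that both its dissipativity and its resolvent can be read off pointwise in $\xi$. I would organize the argument around the Lumer--Phillips theorem (Theorem~\ref{thm:=00005BSTAB=00005D_Lumer-Phillips}) applied on the Hilbert space $\spaceDiff_{-1}$, using the elementary estimate (\ref{eq:=00005BDIFF=00005D_FirstOrder-Estimate}) to control the resolvent. The injectivity is immediate: if $\diff\in\spaceDiff_{1}$ satisfies $A\diff=0$ in $\spaceDiff_{-1}$, then $\xi\,\diff(\xi)=0$ for $\mu$-almost every $\xi\in(0,\infty)$, hence $\diff(\xi)=0$ $\mu$-a.e.\ since $\xi>0$, i.e.\ $\diff=0$.

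For the generation statement I would verify the three hypotheses of Lumer--Phillips. Density of $\spaceD(A)=\spaceDiff_{1}$ in $\spaceDiff_{-1}$ follows by approximating $\diff\in\spaceDiff_{-1}$ with its truncations $\diff\,\mathbf{1}_{[1/n,n]}$, which lie in $\spaceDiff_{1}$ and converge to $\diff$ in $\spaceDiff_{-1}$ by dominated convergence. Dissipativity is the pointwise computation
\[
(A\diff,\diff)_{\spaceDiff_{-1}}=-\int_{0}^{\infty}\frac{\xi}{1+\xi}\,\vert\diff(\xi)\vert^{2}\,\dinf\mu(\xi)\leq0\quad(\diff\in\spaceDiff_{1}),
\]
which is real and nonpositive, so $\Re(A\diff,\diff)_{\spaceDiff_{-1}}\leq0$.

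It remains to invert $s\opId-A$, which simultaneously yields the resolvent-set claim. For $s\in\overline{\spaceC_{0}^{+}}\backslash\{0\}$ one has $s+\xi\neq0$ for every $\xi\geq0$ (indeed $\Re(s+\xi)\geq0$, with equality only if $s\in i\spaceR^{*}$ and $\xi=0$, in which case $s+\xi=s\neq0$). Hence the only candidate solution of $(s\opId-A)\diff=f$, for $f\in\spaceDiff_{-1}$, is the multiplier $\diff(\xi)\coloneqq f(\xi)/(s+\xi)$, and injectivity of $s\opId-A$ is then clear. Using (\ref{eq:=00005BDIFF=00005D_FirstOrder-Estimate}) I would estimate
\[
\Vert\diff\Vert_{\spaceDiff_{1}}^{2}=\int_{0}^{\infty}\frac{(1+\xi)^{2}}{\vert s+\xi\vert^{2}}\,\vert f(\xi)\vert^{2}\,\frac{\dinf\mu(\xi)}{1+\xi}\leq2\max\!\left[1,\frac{1}{\vert s\vert^{2}}\right]\Vert f\Vert_{\spaceDiff_{-1}}^{2},
\]
so that $\diff\in\spaceDiff_{1}=\spaceD(A)$ and $f\mapsto\diff$ is bounded from $\spaceDiff_{-1}$ into $\spaceDiff_{1}$, hence into $\spaceDiff_{-1}$. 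This shows $s\opId-A$ is boundedly invertible for every such $s$, providing both the range condition of Lumer--Phillips (take any $s_{0}\in(0,\infty)$) and the inclusion $\overline{\spaceC_{0}^{+}}\backslash\{0\}\subset\rho(A)$, the constructed operator being $R(s,A)$.

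By Lumer--Phillips, $A$ therefore generates a strongly continuous semigroup of contractions on $\spaceDiff_{-1}$, and the nonempty resolvent set confirms that $A$ is closed. The only point that is not purely formal is the last one: the resolvent, namely multiplication by $(s+\xi)^{-1}$, must gain two powers of the weight $(1+\xi)$ in order to land in the smaller space $\spaceDiff_{1}$, and this must hold uniformly down to the imaginary axis, where $s+\xi$ may become small. This is exactly what the estimate (\ref{eq:=00005BDIFF=00005D_FirstOrder-Estimate}) furnishes; the blow-up of its constant as $\vert s\vert\to0$ is precisely what forces the exclusion of $s=0$.
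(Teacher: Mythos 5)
Your proof is correct and follows essentially the same route as the paper's: injectivity read off pointwise, dissipativity by the explicit computation of $(A\diff,\diff)_{\spaceDiff_{-1}}$, and bounded invertibility of $s\opId-A$ via the multiplier $\srcdiff\mapsto(s+\xi)^{-1}\srcdiff$ controlled by the estimate (\ref{eq:=00005BDIFF=00005D_FirstOrder-Estimate}), concluded with Lumer--Phillips. The only (harmless) organizational difference is that you prove the resolvent bound directly on all of $\overline{\spaceC_{0}^{+}}\backslash\{0\}$, whereas the paper checks it on $(0,\infty)\cup i\spaceR^{*}$ and recovers the open half-plane from the contraction-semigroup property; you also make the density of $\spaceD(A)$ explicit, which the paper leaves implicit.
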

\begin{proof}
The proof is split into three steps, (a), (b), and (c). (a) The injectivity
of $A$ follows directly from its definition. (b) Let us show that
$(0,\infty)\cup i\spaceR^{*}\subset\rho(A)$. Let $\srcdiff\in\spaceDiff_{-1}$,
$s\in(0,\infty)\cup i\spaceR^{*}$, and define
\begin{equation}
\diff(\xi)\coloneqq\frac{1}{s+\xi}\srcdiff(\xi)\quad\text{a.e. on }(0,\infty).\label{eq:=00005BDIFF=00005D_Multiplication-Operator-Resolvent}
\end{equation}
Using the estimate (\ref{eq:=00005BDIFF=00005D_FirstOrder-Estimate}),
we have
\begin{alignat*}{1}
\Vert\diff\Vert_{\spaceDiff_{1}} & \leq\sqrt{2}\max\left[1,\frac{1}{\vert s\vert}\right]\Vert\srcdiff\Vert_{\spaceDiff_{-1}},
\end{alignat*}
so that $\diff$ belongs to $\spaceDiff_{1}$ and $(s\opId-A)\diff=\srcdiff$
is well-posed. (c) For any $\diff\in\spaceDiff_{1}$, we have $\Re\left[(A\diff,\diff)_{\spaceDiff_{-1}}\right]\leq-\Vert\diff\Vert_{\spaceDiff_{0}}^{2}$,
so $A$ is dissipative. By the Lumer-Phillips theorem, $A$ generates
a strongly continuous semigroup of contractions on $\spaceDiff_{-1}$,
so that $\spaceC_{0}^{+}\subset\rho(A)$ \cite[Cor.~3.6]{pazy1983stability}.\end{proof}
\begin{lem}
\label{lem:=00005BDIFF=00005D_Admissible}The triplet of operators
$(A,B,C)$ defined above satisfies (\ref{eq:=00005BDIFF=00005D_ABC-Definition})
as well as the following properties.
\begin{enumerate}
\item[\textup{(i)}] (Stability) $A$ is closed and injective with $\overline{\spaceC_{0}^{+}}\backslash\{0\}\subset\rho(A)$.
\item[\textup{(ii)}] (Regularity)

\begin{enumerate}
\item $A\in\spaceBounded(\spaceDiff_{1},\spaceDiff_{-1})$.
\item For any $s\in\overline{\spaceC_{0}^{+}}\backslash\{0\}$,
\begin{equation}
  AR(s,A)_{\vert\spaceDiff_{0}}\in\spaceBounded(\spaceDiff_{0},\spaceDiff_{0}),
  \label{eq:=00005BDIFF=00005D_Definition-Regularity}
\end{equation}
where the vertical line denotes the restriction.
\end{enumerate}
\item[\textup{(iii)}] (Reality) For any $s\in(0,\infty)$,
\begin{equation}
CR(s,A)B_{\vert\spaceR}\in\spaceR,\label{eq:=00005BDIFF=00005D_Definition-Reality}
\end{equation}

\item[\textup{(iv)}] (Passivity) For any $(\diff,u)\in\spaceD(A\&B)$,
\begin{equation}
\Re\left[(A\diff+Bu,\diff)_{\spaceDiff_{0}}-(u,C\diff)_{\spaceC}\right]\leq0,\label{eq:=00005BDIFF=00005D_Definition-Dissipativity}
\end{equation}
where we define
\[
\spaceD(A\&B)\coloneqq\left\{ (\diff,u)\in\spaceDiff_{1}\times\spaceC\;\vert\;A\diff+Bu\in\spaceDiff_{0}\right\} .
\]

\end{enumerate}
\end{lem}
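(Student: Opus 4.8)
The plan is to exploit the fact that $A$ and its resolvent are multiplication operators against the spectral variable $\xi$, so that every claim reduces to a pointwise estimate on the kernel $\frac{1}{s+\xi}$ integrated against the positive measure $\mu$. Property (i) requires essentially no new work: Lemma~\ref{lem:=00005BDIFF=00005D_Multiplication-Operator-A} already supplies injectivity, generation of a contraction semigroup on $\spaceDiff_{-1}$, and $\overline{\spaceC_{0}^{+}}\backslash\{0\}\subset\rho(A)$, while closedness is automatic for an infinitesimal generator. Since $s\in\rho(A)$, the resolvent $R(s,A)$ is simply multiplication by $\frac{1}{s+\xi}$, a fact I would record at the outset and use throughout.

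For the regularity in (ii)(a), I would estimate directly: for $\diff\in\spaceDiff_{1}$,
\[
\Vert A\diff\Vert_{\spaceDiff_{-1}}^{2}=\int_{0}^{\infty}\frac{\xi^{2}}{1+\xi}\vert\diff(\xi)\vert^{2}\,\dinf\mu(\xi)\leq\int_{0}^{\infty}(1+\xi)\vert\diff(\xi)\vert^{2}\,\dinf\mu(\xi)=\Vert\diff\Vert_{\spaceDiff_{1}}^{2},
\]
using $\frac{\xi^{2}}{(1+\xi)^{2}}\leq1$, so $A\in\spaceBounded(\spaceDiff_{1},\spaceDiff_{-1})$ with norm at most $1$. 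For (ii)(b) I would first check, via the estimate (\ref{eq:=00005BDIFF=00005D_FirstOrder-Estimate}), that $R(s,A)$ maps $\spaceDiff_{0}$ into $\spaceDiff_{1}=\spaceD(A)$, so that $AR(s,A)$ is well defined on $\spaceDiff_{0}$; then, since $AR(s,A)$ acts as multiplication by $\frac{-\xi}{s+\xi}$ and $\Re(s)\geq0$ forces $\vert s+\xi\vert\geq\xi$, one has $\left\vert\frac{\xi}{s+\xi}\right\vert\leq1$ and hence $\Vert AR(s,A)\diff\Vert_{\spaceDiff_{0}}\leq\Vert\diff\Vert_{\spaceDiff_{0}}$, which is (\ref{eq:=00005BDIFF=00005D_Definition-Regularity}).

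The reality (iii) and passivity (iv) are both direct computations. For (iii), $R(s,A)Bu$ is the function $\xi\mapsto\frac{u}{s+\xi}$, so $CR(s,A)Bu=u\int_{0}^{\infty}\frac{\dinf\mu(\xi)}{s+\xi}=u\,\hat{z}(s)$ by (\ref{eq:=00005BDIFF=00005D_Laplace-Standard-Diffusive}); for $s\in(0,\infty)$ and $u\in\spaceR$ the integrand is real and positive, whence the result is real. For (iv), writing $(A\diff+Bu)(\xi)=-\xi\diff(\xi)+u$ and expanding the $\spaceDiff_{0}$ inner product, the two control contributions cancel exactly:
\[
(A\diff+Bu,\diff)_{\spaceDiff_{0}}-(u,C\diff)_{\spaceC}=-\int_{0}^{\infty}\xi\vert\diff(\xi)\vert^{2}\,\dinf\mu(\xi)\leq0,
\]
which is real, so its real part is this nonpositive quantity.

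The computations are elementary; the only points requiring care are bookkeeping on the weighted $L^{2}(\dinf\mu)$ spaces. First, in (ii)(b) one must confirm that $R(s,A)$ genuinely lands in $\spaceDiff_{1}$ before applying $A$, which is precisely where (\ref{eq:=00005BDIFF=00005D_FirstOrder-Estimate}) is needed. Second, in (iv) the splitting of $(A\diff+Bu,\diff)_{\spaceDiff_{0}}$ into $-\int\xi\vert\diff\vert^{2}\,\dinf\mu$ and $u\int\overline{\diff}\,\dinf\mu$ must be justified by showing each integral converges separately: the first is finite because $\diff\in\spaceDiff_{1}$, and the second by Cauchy--Schwarz against (\ref{eq:=00005BDIFF=00005D_Well-posedness-Condition}), which also guarantees that $C\diff$ is well defined. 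I therefore expect no genuine obstacle, only attention to the admissibility of these manipulations.
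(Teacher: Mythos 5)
Your proposal is correct and follows essentially the same route as the paper: every item is reduced to a pointwise estimate on the multiplication kernels $\tfrac{\xi^{2}}{1+\xi}$, $\tfrac{\xi}{s+\xi}$, $\tfrac{1}{s+\xi}$ against $\dinf\mu$, with (i) inherited from Lemma~\ref{lem:=00005BDIFF=00005D_Multiplication-Operator-A} and (iv) collapsing to $-\int_{0}^{\infty}\xi\vert\diff(\xi)\vert^{2}\,\dinf\mu(\xi)\leq0$ after the exact cancellation $(Bu,\diff)_{\spaceDiff_{0}}=(u,C\diff)_{\spaceC}$. Your extra care in justifying that $R(s,A)$ lands in $\spaceDiff_{1}$ before applying $A$ and that the split integrals in (iv) converge separately is sound bookkeeping that the paper leaves implicit.
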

\begin{proof}
Let $A,$ $B$, and $C$ be defined as above. Each of the properties
is proven below.
\begin{itemize}
\item[(i)]  This condition is satisfied from Lemma~\ref{lem:=00005BDIFF=00005D_Multiplication-Operator-A}.
\item[(iia)]  Let $\diff\in\spaceDiff_{1}$. We have
\begin{alignat*}{1}
\Vert A\diff\Vert_{\spaceDiff_{-1}}^{2}= & \int_{0}^{\infty}\vert\diff(\xi)\vert^{2}\frac{\xi^{2}}{1+\xi}\,\dinf\mu(\xi)\\
\leq & \int_{0}^{\infty}\vert\diff(\xi)\vert^{2}(1+\xi)\,\dinf\mu(\xi)=\Vert\diff\Vert_{\spaceDiff_{1}}^{2},
\end{alignat*}
using the inequality $\xi^{2}\leq(1+\xi)^{2}.$
\item[(iib)]  Let $\srcdiff\in\spaceDiff_{0}$ and $s\in\overline{\spaceC_{0}^{+}}\backslash\{0\}$,
\[
\Vert AR(s,A)\srcdiff\Vert_{\spaceDiff_{0}}=\left[\int_{0}^{\infty}\left|\frac{\xi}{s+\xi}\srcdiff\right|^{2}\,\dinf\mu(\xi)\right]^{\nicefrac{1}{2}}\leq\Vert\srcdiff\Vert_{\spaceDiff_{0}},
\]
where we have used $\left|\frac{\xi}{s+\xi}\right|\leq\frac{\xi}{\Re(s)+\xi}\leq1.$
\item[(iii)]  Let $s\in(0,\infty)$ and $u\in\spaceR$. The reality condition
is fulfilled since
\[
CR(s,A)Bu=u\int_{0}^{\infty}\frac{\dinf\mu(\xi)}{s+\xi}.
\]

\item[(iv)]  Let $(\diff,u)\in\spaceD(A\&B)$. We have
\begin{equation}
\Re\left[(A\diff+Bu,\diff)_{\spaceDiff_{0}}-(u,C\diff)_{\spaceC}\right]=-\Re\left[\int_{0}^{\infty}\xi\vert\diff(\xi)\vert^{2}\,\dinf\mu(\xi)\right]\leq0,\label{eq:=00005BDIFF=00005D_Application-Passivity}
\end{equation}
so that the passivity condition is satisfied.
\end{itemize}
\end{proof}
\begin{rem}
The space $\spaceD(A\&B)$ is nonempty. Indeed, it contains at least
the following one dimensional subspace
\[
\left\{ (\diff,u)\in\spaceDiff_{1}\times\spaceC\;\vert\;\diff=R(s,A)Bu\right\}
\]
for any $s\in\rho(A)$ (which is nonempty from Lemma~\ref{lem:=00005BDIFF=00005D_Admissible}(i));
this follows from
\begin{alignat*}{1}
A\diff+Bu= & AR(s,A)Bu+Bu\\
= & sR(s,A)Bu\in\spaceDiff_{1}.
\end{alignat*}
It also contains $\left\{ (R(s,A)\diff,0)\;\vert\;\diff\in\spaceDiff_{0}\right\} $.
\end{rem}
For any $s\in\rho(A)$, we define
\begin{equation}
\hfun\coloneqq s\mapsto CR(s,A)B,\label{eq:=00005BDIFF=00005D_Transfer-Function-ABCD}
\end{equation}
which is analytic, from the analyticity of $R(\cdot,A)$ \cite[Thm.~III.6.7]{kato1995ope}.
Additionally, we have $\hfun(s)\in\spaceR$ for $s\in(0,\infty)$
from (\ref{eq:=00005BDIFF=00005D_Definition-Reality}), and $\Re(\hfun(s))\geq0$
from the passivity condition (\ref{eq:=00005BDIFF=00005D_Definition-Dissipativity})
with $\diff\coloneqq R(s,A)Bu\in\spaceD(A\&B)$:
\[
\Re(s)\Vert R(s,A)Bu\Vert_{\spaceDiff_{0}}^{2}\leq\Re\left[(u,\hfun(s)u)_{\spaceC}\right].
\]
Since $\spaceC_{0}^{+}\subset\rho(A)$, the function $\hfun$ defined
by (\ref{eq:=00005BDIFF=00005D_Transfer-Function-ABCD}) is positive-real.

\subsection{Asymptotic stability\label{sub:=00005BDIFF=00005D_Asymptotic-stability}}

Let $(A,B,C)$ be defined as in Section~\ref{sub:=00005BDIFF=00005D_Abstract-realization}.
We further assume that $A$, $B$, and $C$ are non-null operators.
The coupling between the wave equation (\ref{eq:=00005BMOD=00005D_Wave-Equation})
and the infinite-dimensional realization $(A,B,C)$ can be formulated
as the abstract Cauchy problem (\ref{eq:=00005BSTAB=00005D_Abstract-Cauchy-Problem})
using the following definitions. The extended state space is
\begin{equation}
\begin{gathered}\spaceState\coloneqq\nabla H^{1}(\Omega)\times L^{2}(\Omega)\times L^{2}(\partial\Omega;\spaceDiff_{0}),\\
((\uac,\pac,\diff),(\srcuac,\srcpac,\srcdiff))_{\spaceState}\coloneqq(\uac,\srcuac)+(\pac,\srcpac)+(\diff,\srcdiff)_{L^{2}(\partial\Omega;\spaceDiff_{0})},
\end{gathered}
\label{eq:=00005BDIFF=00005D_State-Space}
\end{equation}
and the evolution operator $\opA$ is
\begin{equation}
\begin{gathered}\spaceDomain\ni\state\coloneqq\left(\begin{array}{c}
\uac\\
\pac\\
\diff
\end{array}\right)\longmapsto\opA\state\coloneqq\left(\begin{array}{c}
-\nabla\pac\\
-\opDiv\uac\\
A\diff+B\uac\cdot\normal
\end{array}\right),\\
\spaceDomain\coloneqq\left\{ (\uac,\pac,\diff)\in\spaceState\;\left|\;\begin{alignedat}{1} & (\uac,\pac,\diff)\in H_{\opDiv}(\Omega)\times H^{1}(\Omega)\times L^{2}(\partial\Omega;\spaceDiff_{1})\\
 & (A\diff+B\uac\cdot\normal)\in L^{2}(\partial\Omega;\spaceDiff_{0})\\
 & \pac=C\diff\;\text{in }H^{\frac{1}{2}}(\partial\Omega)
\end{alignedat}
\right.\right\} ,
\end{gathered}
\label{eq:=00005BDIFF=00005D_Definition-A}
\end{equation}
where the IBC (\ref{eq:=00005BMOD=00005D_IBC},\ref{eq:=00005BDIFF=00005D_Diffusive-Realization})
is the third equation in $\spaceDomain$.
\begin{rem}
In the definition of $\opA$, there is an abuse of notation. Indeed,
we still denote by $A$ the following operator
\[
\left|\begin{alignedat}{1} & L^{2}(\partial\Omega;\spaceDiff_{1})\rightarrow L^{2}(\partial\Omega;\spaceDiff_{-1})\\
 & \diff\mapsto(\coordx\mapsto A\diff(\coordx,\cdot)),
\end{alignedat}
\right.
\]
which is well-defined from Lemma~\ref{lem:=00005BDIFF=00005D_Admissible}(iia)
and Remark~\ref{rem:=00005BDELAY=00005D_Bochner-Integration}. A
similar abuse of notation is employed for $B$ and $C$.
\end{rem}
Asymptotic stability is proven by applying Corollary~\ref{cor:=00005BSTAB=00005D_Asymptotic-Stability}
through Lemmas \ref{lem:=00005BDIFF=00005D_Dissipativity}, \ref{lem:=00005BDIFF=00005D_Injectivity},
and \ref{lem:=00005BDIFF=00005D_Bijectivity} below. In order to clarify
the proofs presented in Lemmas~\ref{lem:=00005BDIFF=00005D_Dissipativity}
and \ref{lem:=00005BDIFF=00005D_Injectivity}, we first prove a regularity
property on $\uac$ that follows from the definition of $\spaceDomain$.
\begin{lem}[Boundary regularity]
\label{lem:=00005BDIFF=00005D_Boundary-Regularity-u}If $\state=(\uac,\pac,\diff)\in\spaceDomain$,
then $\uac\cdot\normal\in L^{2}(\partial\Omega)$.\end{lem}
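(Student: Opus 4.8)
The plan is to exploit that the control operator $B$ is, up to a positive multiplicative constant, an isometry from $\spaceC$ into $\spaceDiff_{-1}$, together with the two regularity conditions built into $\spaceDomain$. First I would record what membership $\state=(\uac,\pac,\diff)\in\spaceDomain$ provides: since $\uac\in H_{\opDiv}(\Omega)$, the normal trace $\uac\cdot\normal$ is a priori only an element of $H^{-\frac12}(\partial\Omega)$; moreover $\diff\in L^2(\partial\Omega;\spaceDiff_1)$ and the combination $A\diff+B\,\uac\cdot\normal$ lies in $L^2(\partial\Omega;\spaceDiff_0)$. The goal is to upgrade $\uac\cdot\normal$ from $H^{-\frac12}(\partial\Omega)$ to $L^2(\partial\Omega)$. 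This is the diffusive analogue of the delay case, where the domain condition $\delay(\cdot,0)=\uac\cdot\normal$ with $\delay(\cdot,0)\in L^2(\partial\Omega)$ gave the result at once; here the information is instead encoded in the coupled expression $A\diff+B\,\uac\cdot\normal$, so an extra subtraction is needed.

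The key observation is that $A\diff$ on its own is already controlled: by Lemma~\ref{lem:=00005BDIFF=00005D_Admissible}(iia) one has $A\in\spaceBounded(\spaceDiff_1,\spaceDiff_{-1})$, and the Bochner-integral stability recalled in Remark~\ref{rem:=00005BDELAY=00005D_Bochner-Integration} then gives $A\diff\in L^2(\partial\Omega;\spaceDiff_{-1})$. Using the continuous embedding $\spaceDiff_0\subset\spaceDiff_{-1}$ from (\ref{eq:=00005BDIFF=00005D_Triplet-Embedding}), the domain condition also places $A\diff+B\,\uac\cdot\normal$ in $L^2(\partial\Omega;\spaceDiff_{-1})$. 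I would then subtract, writing $B\,\uac\cdot\normal=(A\diff+B\,\uac\cdot\normal)-A\diff$, to conclude $B\,\uac\cdot\normal\in L^2(\partial\Omega;\spaceDiff_{-1})$.

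It then remains to transfer this back to $\uac\cdot\normal$ itself, for which I would use the explicit identity $\Vert Bv\Vert_{\spaceDiff_{-1}}=c\,\vert v\vert$ with $c\coloneqq[\int_0^\infty(1+\xi)^{-1}\,\dinf\mu(\xi)]^{1/2}$, which is finite by the well-posedness condition (\ref{eq:=00005BDIFF=00005D_Well-posedness-Condition}) and strictly positive since $B$ is non-null. As $B$ is thus bounded below with one-dimensional range, it admits a bounded left inverse on that range; applying this left inverse pointwise (again via the Bochner result) to $B\,\uac\cdot\normal$ produces an $L^2(\partial\Omega)$ function that coincides with $\uac\cdot\normal$, so that $\Vert\uac\cdot\normal\Vert_{L^2(\partial\Omega)}=c^{-1}\Vert B\,\uac\cdot\normal\Vert_{L^2(\partial\Omega;\spaceDiff_{-1})}<\infty$.

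The step requiring the most care, and what I expect to be the main obstacle, is giving rigorous meaning to the manipulations involving $B\,\uac\cdot\normal$ while $\uac\cdot\normal$ is still only known to lie in $H^{-\frac12}(\partial\Omega)$: one must interpret the sum $A\diff+B\,\uac\cdot\normal$ in the larger space so that the subtraction is legitimate, and then verify that the $L^2(\partial\Omega)$ object recovered through the left inverse of $B$ genuinely represents the distribution $\uac\cdot\normal$ rather than some surrogate. Once this identification is secured, the norm identity for $B$ closes the argument immediately.
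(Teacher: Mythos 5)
Your proposal is correct and follows essentially the same route as the paper's proof: deduce $A\diff\in L^{2}(\partial\Omega;\spaceDiff_{-1})$ from Lemma~\ref{lem:=00005BDIFF=00005D_Admissible}(iia) and Remark~\ref{rem:=00005BDELAY=00005D_Bochner-Integration}, subtract it from $A\diff+B\,\uac\cdot\normal\in L^{2}(\partial\Omega;\spaceDiff_{0})\subset L^{2}(\partial\Omega;\spaceDiff_{-1})$ to get $B\,\uac\cdot\normal\in L^{2}(\partial\Omega;\spaceDiff_{-1})$, and conclude via the explicit norm identity $\Vert Bv\Vert_{\spaceDiff_{-1}}=c\vert v\vert$ with $0<c<\infty$ from (\ref{eq:=00005BDIFF=00005D_Well-posedness-Condition}) and the non-nullity of $B$. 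Your extra care about the left inverse of $B$ and the identification of the recovered $L^{2}(\partial\Omega)$ function with the distribution $\uac\cdot\normal$ only makes explicit what the paper leaves implicit in ``the conclusion then follows from the definition of $B$.''
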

\begin{proof}
Let $\state\in\spaceDomain$. By definition of $\spaceDomain$, we
have $\diff\in L^{2}(\partial\Omega;\spaceDiff_{1})$ so that $A\diff\in L^{2}(\partial\Omega;\spaceDiff_{-1})$
from Lemma~\ref{lem:=00005BDIFF=00005D_Admissible}(iia) and Remark~\ref{rem:=00005BDELAY=00005D_Bochner-Integration}.
From
\[
B\uac\cdot\normal=\underbrace{A\diff+B\uac\cdot\normal}_{\mathclap{\in L^{2}(\partial\Omega;\spaceDiff_{0})}}-\overbrace{A\diff}^{\mathclap{\in L^{2}(\partial\Omega;\spaceDiff_{-1})}},
\]
we deduce that $B\uac\cdot\normal\in L^{2}(\partial\Omega;\spaceDiff_{-1})$.
The conclusion then follows from the definition of $B$ and the condition
(\ref{eq:=00005BDIFF=00005D_Well-posedness-Condition}).\end{proof}
\begin{lem}
\label{lem:=00005BDIFF=00005D_Dissipativity}The operator $\opA$
given by (\ref{eq:=00005BDIFF=00005D_Definition-A}) is dissipative.\end{lem}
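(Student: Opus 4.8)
The plan is to compute $\Re(\opA\state,\state)_{\spaceState}$ directly for $\state=(\uac,\pac,\diff)\in\spaceDomain$ and show it is nonpositive, splitting the computation into the wave part (the first two components) and the diffusive part (the third component). The crucial point will be to reduce the boundary contribution to the pointwise passivity estimate (iv) of Lemma~\ref{lem:=00005BDIFF=00005D_Admissible}, integrated over $\partial\Omega$.

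First I would treat the wave part. Applying Green's formula (\ref{eq:=00005BPRE=00005D_Green-Formula}) to $\Re[(-\nabla\pac,\uac)+(-\opDiv\uac,\pac)]$ produces the single boundary term $-\Re[\langle\uac\cdot\normal,\overline{\pac}\rangle_{H^{-\frac{1}{2}}(\partial\Omega),H^{\frac{1}{2}}(\partial\Omega)}]$, exactly as in the delay case. Here Lemma~\ref{lem:=00005BDIFF=00005D_Boundary-Regularity-u} is essential: it guarantees $\uac\cdot\normal\in L^{2}(\partial\Omega)$, so that the duality bracket collapses to the (finite) $L^{2}(\partial\Omega)$ inner product $(\uac\cdot\normal,\pac)_{L^{2}(\partial\Omega)}$. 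Using the IBC $\pac=C\diff$ from the definition of $\spaceDomain$, this boundary term becomes $-\Re[(\uac\cdot\normal,C\diff)_{L^{2}(\partial\Omega)}]$.

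Next I would collect the contributions and recognise the passivity structure. Adding the third component gives
\[
\Re(\opA\state,\state)_{\spaceState}=\Re\left[(A\diff+B\,\uac\cdot\normal,\diff)_{L^{2}(\partial\Omega;\spaceDiff_{0})}-(\uac\cdot\normal,C\diff)_{L^{2}(\partial\Omega)}\right].
\]
The key observation is that, by the definition of $\spaceDomain$, for a.e. $\coordx\in\partial\Omega$ the pair $(\diff(\coordx,\cdot),(\uac\cdot\normal)(\coordx))$ belongs to $\spaceD(A\&B)$, since $\diff\in L^{2}(\partial\Omega;\spaceDiff_{1})$ and $A\diff+B\,\uac\cdot\normal\in L^{2}(\partial\Omega;\spaceDiff_{0})$. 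Property (iv) of Lemma~\ref{lem:=00005BDIFF=00005D_Admissible} then applies fibrewise: for a.e. $\coordx$, one has $\Re[(A\diff+B\,\uac\cdot\normal,\diff)_{\spaceDiff_{0}}-(\uac\cdot\normal,C\diff)_{\spaceC}]\leq0$. Integrating this scalar inequality over $\partial\Omega$ and identifying the two integrals with the terms in the displayed equation yields $\Re(\opA\state,\state)_{\spaceState}\leq0$, i.e. dissipativity.

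The main obstacle I anticipate is the bookkeeping needed to legitimise this pointwise reduction: one must verify that the fibrewise quantities are strongly measurable and integrable in $\coordx$ (so that the fibrewise inner products integrate to the Bochner inner products on $L^{2}(\partial\Omega;\spaceDiff_{0})$, in the spirit of Remark~\ref{rem:=00005BDELAY=00005D_Bochner-Integration}), and that the membership in $\spaceD(A\&B)$ holds for a.e. $\coordx$ rather than merely in an integrated sense. Once this measurability and integrability layer is in place, the conclusion is an immediate integration of the abstract passivity inequality (iv).
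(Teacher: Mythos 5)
Your proposal is correct and follows essentially the same route as the paper: Green's formula plus the boundary regularity lemma to reduce the duality bracket to an $L^{2}(\partial\Omega)$ pairing, the IBC $\pac=C\diff$ to rewrite the boundary term, and the passivity property (iv) of Lemma~\ref{lem:=00005BDIFF=00005D_Admissible} applied fibrewise and integrated over $\partial\Omega$. The paper's proof is simply a terser version of this, leaving the fibrewise membership in $\spaceD(A\&B)$ and the Bochner-integration bookkeeping implicit.
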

\begin{proof}
Let $\state\in\spaceDomain$. In particular, $\uac\cdot\normal\in L^{2}(\partial\Omega)$
from Lemma~\ref{lem:=00005BDIFF=00005D_Boundary-Regularity-u}. Green's
formula (\ref{eq:=00005BPRE=00005D_Green-Formula}) and the inequality
(\ref{eq:=00005BDIFF=00005D_Definition-Dissipativity}) yield
\begin{alignat*}{1}
\Re(\opA\state,\state)_{\spaceState} & =\Re\left[(A\diff+B\uac\cdot\normal,\diff)_{L^{2}(\partial\Omega;\spaceDiff_{0})}-\langle\uac\cdot\normal,\overline{\pac}\rangle_{H^{-\frac{1}{2}}(\partial\Omega),H^{\frac{1}{2}}(\partial\Omega)}\right]\\
 & =\Re\left[(A\diff+B\uac\cdot\normal,\diff)_{L^{2}(\partial\Omega;\spaceDiff_{0})}-(\uac\cdot\normal,C\diff){}_{L^{2}(\partial\Omega)}\right]\leq0,
\end{alignat*}
where we have used that $\uac\cdot\normal\in L^{2}(\partial\Omega)$.\end{proof}
\begin{lem}
\label{lem:=00005BDIFF=00005D_Injectivity}The operator $\opA$ given
by (\ref{eq:=00005BDIFF=00005D_Definition-A}) is injective.\end{lem}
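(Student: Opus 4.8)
The plan is to mimic the injectivity argument used for the delay impedance (Lemma~\ref{lem:=00005BDELAY=00005D_Injectivity}): assume $\state=(\uac,\pac,\diff)\in\spaceDomain$ satisfies $\opA\state=0$ and deduce $\state=0$. Reading off the three components of (\ref{eq:=00005BDIFF=00005D_Definition-A}), this means $\nabla\pac=\vector 0$, $\opDiv\uac=0$, and $A\diff+B\uac\cdot\normal=0$ in $L^{2}(\partial\Omega;\spaceDiff_{0})$. Before anything else I would record that $\uac\cdot\normal\in L^{2}(\partial\Omega)$ by Lemma~\ref{lem:=00005BDIFF=00005D_Boundary-Regularity-u}, so that every boundary pairing below is a genuine $L^{2}$ inner product through the pivot-space identity (\ref{eq:=00005BPRE=00005D_L2-Pivot-Space}).

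The key observation is that $\opA\state=0$ forces the dissipation to vanish identically. Since $(\opA\state,\state)_{\spaceState}=0$, the computation already carried out in Lemma~\ref{lem:=00005BDIFF=00005D_Dissipativity} (which uses Green's formula (\ref{eq:=00005BPRE=00005D_Green-Formula}) and the IBC $\pac=C\diff$), combined with the explicit passivity identity (\ref{eq:=00005BDIFF=00005D_Application-Passivity}) applied pointwise in $\coordx$ and integrated over $\partial\Omega$, yields
\[
0=\Re(\opA\state,\state)_{\spaceState}=-\int_{\partial\Omega}\int_{0}^{\infty}\xi\,\vert\diff(\coordx,\xi)\vert^{2}\,\dinf\mu(\xi)\,\dinf\coordx.
\]
Hence $\xi\,\vert\diff(\coordx,\xi)\vert^{2}=0$ for a.e. $\coordx\in\partial\Omega$ and $\mu$-a.e. $\xi$, from which I would conclude $\diff(\coordx,\cdot)=0$ in $\spaceDiff_{0}$ for a.e. $\coordx$; here the injectivity of $A$ from Lemma~\ref{lem:=00005BDIFF=00005D_Multiplication-Operator-A} guarantees that the degeneracy of the weight $\xi$ at the origin carries no residual information, so that $\diff=0$ genuinely follows.

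It then remains to propagate $\diff=0$ back to $\uac$ and $\pac$. As $A\diff=0$, the third equation forces $B\uac\cdot\normal=0$ in $L^{2}(\partial\Omega;\spaceDiff_{-1})$; since $\Vert B\uac\cdot\normal\Vert_{L^{2}(\partial\Omega;\spaceDiff_{-1})}=\left[\int_{0}^{\infty}(1+\xi)^{-1}\,\dinf\mu(\xi)\right]^{1/2}\Vert\uac\cdot\normal\Vert_{L^{2}(\partial\Omega)}$ and the constant is positive and finite by (\ref{eq:=00005BDIFF=00005D_Well-posedness-Condition}) and the non-nullity of $B$, I get $\uac\cdot\normal=0$ a.e. on $\partial\Omega$. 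Then $\uac\in\nabla H^{1}(\Omega)$ is divergence-free with vanishing normal trace, so the orthogonal decomposition (\ref{eq:=00005BPRE=00005D_Hodge-Decomposition}) gives $\uac=\vector 0$. Finally $\pac=C\diff=0$ on $\partial\Omega$ by the IBC, and since $\nabla\pac=\vector 0$ this propagates to $\pac=0$ a.e. in $\Omega$. The single delicate point I anticipate is precisely the passage from the vanishing of $\int_{\partial\Omega}\int_{0}^{\infty}\xi\vert\diff\vert^{2}$ to $\diff=0$: the weight $\xi$ degenerates at the origin, so the argument leans essentially on the injectivity of $A$ (equivalently, on $\mu$ charging $(0,\infty)$), with the non-nullity of $B$ then playing the symmetric role of converting $B\uac\cdot\normal=0$ into $\uac\cdot\normal=0$.
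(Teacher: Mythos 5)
Your argument is correct, but it follows a genuinely different route from the paper's. The paper never invokes the energy identity: from $\opA\state=0$ it extracts the single scalar relation $(\uac\cdot\normal,C\diff)_{L^{2}(\partial\Omega)}=0$ via Green's formula and the IBC, and then solves the realization equation $A\diff=-B\uac\cdot\normal$ through a case analysis on whether $0\in\rho(A)$ or $0\in\sigma_{r}(A)\cup\sigma_{c}(A)$, concluding $\uac\cdot\normal=0$ from the non-nullity of $\zfun(0)=CR(0,A)B$, respectively of $-CA^{-1}B$. You instead observe that $\opA\state=0$ forces $\Re(\opA\state,\state)_{\spaceState}=0$ and that the passivity computation (\ref{eq:=00005BDIFF=00005D_Application-Passivity}) is an exact \emph{equality}, not merely an inequality, so that $\int_{\partial\Omega}\int_{0}^{\infty}\xi\vert\diff(\coordx,\xi)\vert^{2}\,\dinf\mu(\xi)\,\dinf\coordx=0$ and hence $\diff=0$; the remaining steps ($B$ injective gives $\uac\cdot\normal=0$, then (\ref{eq:=00005BPRE=00005D_Hodge-Decomposition}) and $\nabla\pac=\vector 0$) coincide with the paper's. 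Your version is shorter and sidesteps the spectral discussion entirely, in particular the somewhat delicate meaning of $-CA^{-1}B$ when $0$ lies in the continuous spectrum of $A$; the price is that it uses the exact form of the dissipation rather than only its sign, and it hinges on the point you correctly flag: the vanishing of the weighted integral only annihilates $\diff$ on $\{\xi>0\}$, which suffices precisely because the injectivity of the multiplication operator $A$ (equivalently, the fact that $\spaceDiff_{0}$ does not see any mass of $\mu$ at $\xi=0$) is already part of the paper's standing framework. Since $B\neq0$, $C\neq0$, and $\mu\neq0$ are all equivalent here, the two proofs rest on the same non-degeneracy hypothesis, and yours can be spliced in without loss of generality.
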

\begin{proof}
Assume $\state\in\spaceDomain$ satisfies $\opA\state=0$. In particular
$\nabla\pac=\vector 0$ and $\opDiv\uac=0$, so that Green's formula
(\ref{eq:=00005BPRE=00005D_Green-Formula}) yields
\[
\langle\uac\cdot\normal,\overline{\pac}\rangle_{H^{-\frac{1}{2}}(\partial\Omega),H^{\frac{1}{2}}(\partial\Omega)}=0,
\]
and by combining with the IBC (i.e. the third equation
in $\spaceDomain$)
\begin{equation}
(\uac\cdot\normal,C\diff)_{L^{2}(\partial\Omega)}=0,\label{eq:=00005BDIFF=00005D_Injectivity-1}
\end{equation}
where we have used that $\uac\cdot\normal\in L^{2}(\partial\Omega)$
from Lemma~\ref{lem:=00005BDIFF=00005D_Boundary-Regularity-u}. The
third equation that comes from $\opA\state=0$ is
\begin{equation}
A\diff(\coordx,\cdot)+B\uac(\coordx)\cdot\normal(\coordx)=0\quad\text{in }\spaceDiff_{0}\text{ for a.e. }\coordx\in\partial\Omega.\label{eq:=00005BDIFF=00005D_Injectivity-2}
\end{equation}
We now prove that $\state=0$, the key step being solving (\ref{eq:=00005BDIFF=00005D_Injectivity-2}).
Since $A$ is injective, (\ref{eq:=00005BDIFF=00005D_Injectivity-2})
has at most one solution $\diff\in L^{2}(\partial\Omega;\spaceDiff_{1})$.
Let us distinguish the possible cases.
\begin{itemize}
\item If $0\in\rho(A)$, then $\diff=R(0,A)B\uac\cdot\normal\in L^{2}(\partial\Omega;\spaceDiff_{1})$
is the unique solution. Inserting in (\ref{eq:=00005BDIFF=00005D_Injectivity-1})
and using (\ref{eq:=00005BDIFF=00005D_Transfer-Function-ABCD}) yields
\[
(\uac\cdot\normal,\zfun(0)\uac\cdot\normal)_{L^{2}(\partial\Omega)}=0,
\]
from which we deduce that $\uac\cdot\normal=0$ since $\zfun(0)$
is non-null.
\item If $0\in\sigma_{r}(A)\cup\sigma_{c}(A)$, then either $\overline{R(A)}\neq\spaceDiff_{-1}$
(definition of the residual spectrum) or $\overline{R(A)}=\spaceDiff_{-1}$
but $R(A)\neq\spaceDiff_{-1}$ (definition of the continuous spectrum
combined with the closed graph theorem, since $A$ is closed). $R\left(A\right)$
is equipped with the norm from $\spaceDiff_{-1}$. If $B\uac\cdot\normal\notin L^{2}(\partial\Omega;R(A))$,
then the only solution is $\diff=0$ and $\uac\cdot\normal=0$. If
$B\uac\cdot\normal\in L^{2}(\partial\Omega;R(A))$, then $\diff=-A^{-1}B\uac\cdot\normal$
is the unique solution, where $A^{-1}:\,R(A)\rightarrow\spaceDiff_{1}$
is an unbounded closed bijection. Inserting in (\ref{eq:=00005BDIFF=00005D_Injectivity-1})
yields
\[
(\uac\cdot\normal,(-CA^{-1}B)\uac\cdot\normal)_{L^{2}(\partial\Omega)}=0.
\]
Since $(-CA^{-1}B)\in\spaceC$ is non-null, we deduce that $\uac\cdot\normal=0$.
\end{itemize}

In summary, $\uac\in H_{\opDiv0,0}(\Omega)$, $\diff=0$ in $L^{2}(\partial\Omega;\spaceDiff_{1})$,
and $\pac=0$ in $L^{2}(\partial\Omega)$. The nullity of $\pac$
follows from $\nabla\pac=0$. The nullity of $\uac$ follows from
$H_{\opDiv0,0}(\Omega)\cap\nabla H^{1}(\Omega)=\{0\}$, see (\ref{eq:=00005BPRE=00005D_Hodge-Decomposition}).

\end{proof}
\begin{lem}
\label{lem:=00005BDIFF=00005D_Bijectivity}Let $\opA$ be given by
(\ref{eq:=00005BDIFF=00005D_Definition-A}). Then, $s\opId-\opA$
is bijective for $s\in(0,\infty)\cup i\spaceR^{*}$.\end{lem}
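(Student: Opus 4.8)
The plan is to follow verbatim the three-step structure of the proof of Lemma~\ref{lem:=00005BDELAY=00005D_Bijectivity}, the only difference being that the transport realization is replaced by the diffusive triplet $(A,B,C)$. Fix $\srcstate=(\srcuac,\srcpac,\srcdiff)\in\spaceState$ and $s\in(0,\infty)\cup i\spaceR^{*}$; I seek a \emph{unique} $\state=(\uac,\pac,\diff)\in\spaceDomain$ with $(s\opId-\opA)\state=\srcstate$, i.e.
\[
s\uac+\nabla\pac=\srcuac,\quad s\pac+\opDiv\uac=\srcpac,\quad (s\opId-A)\diff-B\,\uac\cdot\normal=\srcdiff.
\]
The whole point is that the third (diffusive) equation can be solved explicitly in terms of the boundary trace $\uac\cdot\normal$, reducing the system to the scalar weak formulation of Theorem~\ref{thm:=00005BMOD=00005D_Weak-Form-p}.

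For step~(a), assume such a $\state$ exists. Since $s\in\overline{\spaceC_{0}^{+}}\backslash\{0\}\subset\rho(A)$ by Lemma~\ref{lem:=00005BDIFF=00005D_Multiplication-Operator-A}, and since $\uac\cdot\normal\in L^{2}(\partial\Omega)$ by Lemma~\ref{lem:=00005BDIFF=00005D_Boundary-Regularity-u}, the third equation is uniquely solved pointwise on $\partial\Omega$ as
\[
\diff=R(s,A)\srcdiff+R(s,A)B\,\uac\cdot\normal\in L^{2}(\partial\Omega;\spaceDiff_{1}),
\]
where membership in $L^{2}(\partial\Omega;\spaceDiff_{1})$ follows from $R(s,A)\in\spaceBounded(\spaceDiff_{-1},\spaceDiff_{1})$ and the Bochner framework of Remark~\ref{rem:=00005BDELAY=00005D_Bochner-Integration}. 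Feeding this into the IBC $\pac=C\diff$ and using $\zfun(s)=CR(s,A)B$ gives $\pac=CR(s,A)\srcdiff+\zfun(s)\,\uac\cdot\normal$ on $\partial\Omega$. Combining the first two equations with Green's formula~(\ref{eq:=00005BPRE=00005D_Green-Formula}) and substituting $\uac\cdot\normal=\zfun(s)^{-1}(\pac-CR(s,A)\srcdiff)$ then yields, for all $\test\in H^{1}(\Omega)$,
\[
(\nabla\pac,\nabla\test)+s^{2}(\pac,\test)+\frac{s}{\zfun(s)}(\pac,\test)_{L^{2}(\partial\Omega)}=(\srcuac,\nabla\test)+s(\srcpac,\test)+\frac{s}{\zfun(s)}(CR(s,A)\srcdiff,\test)_{L^{2}(\partial\Omega)},
\]
which is exactly~(\ref{eq:=00005BMOD=00005D_Weak-Form-p}) with $z(s)\coloneqq\zfun(s)$.

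For step~(b) I run this backwards: let $\pac\in H^{1}(\Omega)$ be the unique solution furnished by Theorem~\ref{thm:=00005BMOD=00005D_Weak-Form-p}, define $\uac\coloneqq s^{-1}(\srcuac-\nabla\pac)\in\nabla H^{1}(\Omega)$, and check with test functions in $\spaceContinuous_{0}^{\infty}(\Omega)$ that $\uac\in H_{\opDiv}(\Omega)$ with $s\pac+\opDiv\uac=\srcpac$. Green's formula then recovers $\uac\cdot\normal\in L^{2}(\partial\Omega)$ together with the boundary identity $\pac=\zfun(s)\,\uac\cdot\normal+CR(s,A)\srcdiff$. Setting $\diff\coloneqq R(s,A)\srcdiff+R(s,A)B\,\uac\cdot\normal\in L^{2}(\partial\Omega;\spaceDiff_{1})$, one computes $A\diff+B\uac\cdot\normal=s\diff-\srcdiff\in L^{2}(\partial\Omega;\spaceDiff_{0})$ and $C\diff=CR(s,A)\srcdiff+\zfun(s)\,\uac\cdot\normal=\pac$, so that $\state=(\uac,\pac,\diff)\in\spaceDomain$ solves the system. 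For step~(c), $\pac$ is unique by Theorem~\ref{thm:=00005BMOD=00005D_Weak-Form-p}, $\uac$ is unique in $H_{\opDiv}(\Omega)\cap\nabla H^{1}(\Omega)$ by the Hodge decomposition~(\ref{eq:=00005BPRE=00005D_Hodge-Decomposition}), and $\diff$ is unique because $s\in\rho(A)$ makes the third equation uniquely solvable.

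The main obstacle is checking that Theorem~\ref{thm:=00005BMOD=00005D_Weak-Form-p} is actually applicable at the $s$ considered, i.e.\ that $\zfun(s)\in\spaceC_{0}^{+}$, so that $\zfun(s)^{-1}$ exists and the coercivity (for real $s$) or Fredholm (for $s\in i\spaceR^{*}$) argument of the theorem goes through. For $s\in(0,\infty)$ this is immediate since $\zfun(s)=\int_{0}^{\infty}(s+\xi)^{-1}\,\dinf\mu(\xi)>0$. For $s=i\omega$, $\omega\in\spaceR^{*}$, positive-realness only gives $\Re(\zfun(i\omega))\geq0$, yet the proof of Theorem~\ref{thm:=00005BMOD=00005D_Weak-Form-p} (through Lemma~\ref{lem:=00005BMOD=00005D_Weak-Form-p_K-eigenvalue}) genuinely needs the strict inequality; here the standing assumption that $A$ is non-null is what saves the day, because $A\neq0$ forces $\mu$ to give positive mass to $(0,\infty)$, whence $\Re(\zfun(i\omega))=\int_{0}^{\infty}\frac{\xi}{\omega^{2}+\xi^{2}}\,\dinf\mu(\xi)>0$. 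The remaining care is bookkeeping: one must repeatedly invoke the Bochner framework of Remark~\ref{rem:=00005BDELAY=00005D_Bochner-Integration} and the boundedness of $R(s,A)$ on the scale $\spaceDiff_{1}\subset\spaceDiff_{0}\subset\spaceDiff_{-1}$ to ensure that $\diff$ lands in $L^{2}(\partial\Omega;\spaceDiff_{1})$ and that $CR(s,A)\srcdiff\in L^{2}(\partial\Omega)$, so that the right-hand side above is a bounded functional of $\test$.
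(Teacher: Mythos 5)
Your proposal is correct and follows essentially the same route as the paper's proof: the same reduction of the resolvent equation to the scalar weak formulation of Theorem~\ref{thm:=00005BMOD=00005D_Weak-Form-p} via $\diff=R(s,A)(B\uac\cdot\normal+\srcdiff)$, the same reconstruction of $\uac$ and $\diff$, and the same uniqueness argument. Your explicit check that $\Re(\zfun(i\omega))=\int_{0}^{\infty}\tfrac{\xi}{\omega^{2}+\xi^{2}}\,\dinf\mu(\xi)>0$ once $A$ is non-null is a detail the paper leaves implicit (it only records $\Re(\zfun(s))\geq0$ before invoking the theorem, whose hypothesis requires strict positivity), and is a welcome clarification.
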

\begin{proof}
Let $\srcstate\in\spaceState$ and $s\in(0,\infty)\cup i\spaceR^{*}$.
We seek a \emph{unique} $\state\in\spaceDomain$ such that $(s\opId-\opA)\state=\srcstate$,
i.e.
\begin{equation}
\begin{cases}
s\uac+\nabla\pac=\srcuac & \text{(a)}\\
s\pac+\opDiv\uac=\srcpac & \text{(b)}\\
s\diff-A\diff-B\uac\cdot\normal=\srcdiff. & \text{(c)}
\end{cases}\label{eq:=00005BDIFF=00005D_Bijectivity-Eq}
\end{equation}
For later use, let us note that Equation~(\ref{eq:=00005BDIFF=00005D_Bijectivity-Eq}c)
and the IBC (i.e. the third equation in $\spaceDomain$)
imply
\begin{alignat}{2}
\diff & =R(s,A)(B\uac\cdot\normal+\srcdiff) &  & \quad\text{in }L^{2}(\partial\Omega;\spaceDiff_{1})\label{eq:=00005BDIFF=00005D_Bijectivity-DiffState}\\
\pac & =\zfun(s)\uac\cdot\normal+CR(s,A)\srcdiff &  & \quad\text{in }L^{2}(\partial\Omega).\label{eq:=00005BDIFF=00005D_Bijectivity-IBC}
\end{alignat}
Let $\test\in H^{1}(\Omega)$. Combining $(\srcuac,\nabla\test)+s(\srcpac,\test)$
with (\ref{eq:=00005BDIFF=00005D_Bijectivity-IBC}) yields
\begin{equation}
\begin{alignedat}{1}(\nabla\pac,\nabla\test)+s^{2}(\pac,\test)+\frac{s}{\zfun(s)}(\pac,\test)_{L^{2}(\partial\Omega)}=\; & (\srcuac,\nabla\test)+s(\srcpac,\test)\\
 & +\frac{s}{\zfun(s)}(CR(s,A)\srcdiff,\test)_{L^{2}(\partial\Omega)}.
\end{alignedat}
\label{eq:=00005BDIFF=00005D_Bijectivity-WeakForm-p}
\end{equation}
Note that since $CR(s,A)\in\spaceBounded(\spaceDiff_{-1},\spaceC),$
we have
\[
\coordx\mapsto CR(s,A)\srcdiff(\coordx)\in L^{2}(\partial\Omega),
\]
so that (\ref{eq:=00005BDIFF=00005D_Bijectivity-WeakForm-p}) is meaningful.
Moreover, we have $\Re(\zfun(s))\geq0$, and $\zfun(s)\in(0,\infty)$
for $s\in(0,\infty$). Therefore, we can apply Theorem~\ref{thm:=00005BMOD=00005D_Weak-Form-p},
pointwise, for $s\in(0,\infty)\cup i\spaceR^{*}$.

Let us denote by $\pac$ the unique solution of (\ref{eq:=00005BDIFF=00005D_Bijectivity-WeakForm-p})
in $H^{1}(\Omega)$, obtained from Theorem~\ref{thm:=00005BMOD=00005D_Weak-Form-p}.
It remains to find suitable $\uac$ and $\diff$.

Let us \emph{define} $\uac\in\nabla H^{1}(\Omega)$ by (\ref{eq:=00005BDIFF=00005D_Bijectivity-Eq}a).
Taking $\test\in\spaceContinuous_{0}^{\infty}(\Omega)$ in (\ref{eq:=00005BDIFF=00005D_Bijectivity-WeakForm-p})
shows that $\uac\in H_{\opDiv}(\Omega)$ and (\ref{eq:=00005BDIFF=00005D_Bijectivity-Eq}b)
holds. Using the expressions of $\uac$ and $\opDiv\uac$, and Green's
formula (\ref{eq:=00005BPRE=00005D_Green-Formula}), the weak formulation
(\ref{eq:=00005BDIFF=00005D_Bijectivity-WeakForm-p}) can be rewritten
as
\[
\langle\uac\cdot\normal,\overline{\test}\rangle_{H^{-\frac{1}{2}}(\partial\Omega),H^{\frac{1}{2}}(\partial\Omega)}=\zfun(s)^{-1}(\pac,\test)_{L^{2}(\partial\Omega)}-\zfun(s)^{-1}(CR(s,A)\srcdiff,\test)_{L^{2}(\partial\Omega)},
\]
which shows that $\pac$ and $\uac$ satisfy (\ref{eq:=00005BDIFF=00005D_Bijectivity-IBC}).

Let us now \emph{define} $\diff$ with (\ref{eq:=00005BDIFF=00005D_Bijectivity-DiffState}),
which belongs to $L^{2}(\partial\Omega;\spaceDiff_{1})$.
By rewriting (\ref{eq:=00005BDIFF=00005D_Bijectivity-IBC}) as
\[
\pac=(\zfun(s)-CR(s,A)B)\uac\cdot\normal+CR(s,A)(B\uac\cdot\normal+\srcdiff),
\]
we obtain from (\ref{eq:=00005BDIFF=00005D_Transfer-Function-ABCD})
and (\ref{eq:=00005BDIFF=00005D_Bijectivity-DiffState}) that the
IBC holds.

To obtain $(\uac,\pac,\diff)\in\spaceDomain$ it remains to show that
$A\diff+B\uac\cdot\normal$ belongs to $L^{2}(\partial\Omega;\spaceDiff_{0})$.
Using the definition (\ref{eq:=00005BDIFF=00005D_Bijectivity-DiffState})
of $\diff$, we have
\begin{alignat*}{1}
A\diff+B\uac\cdot\normal & =AR(s,A)(B\uac\cdot\normal+\srcdiff)+B\uac\cdot\normal\\
 & =(AR(s,A)+\opId)B\uac\cdot\normal+AR(s,A)\srcdiff\\
 & =sR(s,A)B\uac\cdot\normal+AR(s,A)\srcdiff.
\end{alignat*}
Since $\uac\cdot\normal\in L^{2}(\partial\Omega)$ and $R(s,A)B\in\spaceBounded(\spaceC,\spaceDiff_{1})$,
we have
\[
sR(s,A)B\uac\cdot\normal\in L^{2}(\partial\Omega;\spaceDiff_{1}).
\]
The property (\ref{eq:=00005BDIFF=00005D_Definition-Regularity})
implies that
\[
AR(s,A)\srcdiff\in L^{2}(\partial\Omega;\spaceDiff_{0}),
\]
hence that $(\uac,\pac,\diff)\in\spaceDomain$.

The uniqueness of $\pac$ follows from Theorem~\ref{thm:=00005BMOD=00005D_Weak-Form-p},
that of $\uac$ from (\ref{eq:=00005BPRE=00005D_Hodge-Decomposition}),
and that of $\diff$ from the bijectivity of $s\idMat-A$.
\end{proof}

\begin{rem}
The time-delay case does not fit into the framework proposed in Section~\ref{sub:=00005BDIFF=00005D_Abstract-realization},
see Remark~\ref{rem:=00005BDelay=00005D_Formal-Resolvent-Operator}.
This justifies why delay and standard diffusive IBCs are covered separately.
\end{rem}

\section{Extended diffusive impedance\label{sec:=00005BEXTDIFF=00005D_Extended-diffusive-impedance}}

In this section, we focus on a variant of the standard diffusive kernel,
namely the so-called \emph{extended diffusive} kernel given by
\begin{equation}
\hat{z}(s)\coloneqq\int_{0}^{\infty}\frac{s}{s+\xi}\dinf\mu(\xi),\label{eq:=00005BEXTDIFF=00005D_Laplace-Extended-Diffusive}
\end{equation}
where $\mu$ is a Radon measure that satisfies the condition (\ref{eq:=00005BDIFF=00005D_Well-posedness-Condition}),
already encountered in the standard case, and
\begin{equation}
\int_{0}^{\infty}\frac{1}{\xi}\dinf\mu(\xi)=\infty.\label{eq:=00005BEXTDIFF=00005D_Mu-Condition}
\end{equation}
The additional condition (\ref{eq:=00005BEXTDIFF=00005D_Mu-Condition})
implies that $t\mapsto\int_{0}^{\infty}e^{-\xi t}\,\dinf\mu(\xi)$
is not integrable on $(0,\infty)$, see Remark~\ref{rem:=00005BDIFF=00005D_Completely-Monotonic}.

From (\ref{eq:=00005BDIFF=00005D_Diffusive-Realization}), we directly
deduce that $\hat{z}$ \emph{formally} admits the realization
\begin{equation}
\left\{ \begin{alignedat}{1} & \vphantom{\int}\partial_{t}\diff(t,\xi)=-\xi\diff(t,\xi)+u(t),\;\diff(0,\xi)=0\quad\left(\xi\in(0,\infty)\right),\\
 & z\star u(t)=\int_{0}^{\infty}(-\xi\diff(t,\xi)+u(t))\,\dinf\mu(\xi),
\end{alignedat}
\right.\label{eq:=00005BEXTDIFF=00005D_Extended-Diffusive-Realization}
\end{equation}
where $u$ is a causal input. The separate treatment of the standard
(\ref{eq:=00005BDIFF=00005D_Laplace-Standard-Diffusive}) and extended
(\ref{eq:=00005BEXTDIFF=00005D_Laplace-Extended-Diffusive}) cases
is justified by the fact that physical models typically yield non-integrable
kernels, i.e.
\begin{equation}
\int_{0}^{\infty}\dinf\mu(\xi)=+\infty,\label{eq:=00005BEXTDIFF=00005D_Infinite-Integral}
\end{equation}
which prevents from splitting the observation integral in (\ref{eq:=00005BEXTDIFF=00005D_Extended-Diffusive-Realization}):
the observation and feedthrough operators must be
combined into $C\&D$. This justifies why (\ref{eq:=00005BEXTDIFF=00005D_Extended-Diffusive-Realization})
is only formal. Although a functional setting for (\ref{eq:=00005BEXTDIFF=00005D_Extended-Diffusive-Realization})
has been obtained in \cite[\S~B.3]{monteghetti2017delay}, we shall
again follow the philosophy laid out in Section~\ref{sec:=00005BDIFF=00005D_Standard-diffusive-impedance}.
Namely, Section~\ref{sub:=00005BEXTDIFF=00005D_Abstract-realization}
presents an abstract realization framework whose properties are given
in Lemma~\ref{lem:=00005BEXTDIFF=00005D_Admissible}, which slightly
differs from the standard case, and Section~\ref{sub:=00005BEXTDIFF=00005D_Asymptotic-stability}
shows asymptotic stability of the coupled system (\ref{eq:=00005BEXTDIFF=00005D_Definition-A}).
\begin{rem}
\label{rem:=00005BEXTDIFF=00005D_Fractional-Derivative} Let $\alpha\in(0,1)$.
The typical extended diffusive operator is the Riemman-Liouville fractional
derivative \cite[\S~2.3]{podlubny1999fractional} \cite{matignon2008introduction},
obtained for $\hat{z}(s)=s^{1-\alpha}$ and $\dinf\mu$ given by (\ref{eq:=00005BDIFF=00005D_Diffusive-Weight-Fractional}),
which satisfies the condition (\ref{eq:=00005BEXTDIFF=00005D_Mu-Condition}).
For this measure $\dinf\mu$, choosing the initialization $\varphi(0,\xi)=\nicefrac{u(0)}{\xi}$
in (\ref{eq:=00005BEXTDIFF=00005D_Extended-Diffusive-Realization})
yields the Caputo derivative \cite{lombard2016fractional}.
\end{rem}

\subsection{Abstract realization\label{sub:=00005BEXTDIFF=00005D_Abstract-realization}}

To give meaning to the realization (\ref{eq:=00005BEXTDIFF=00005D_Extended-Diffusive-Realization})
we follow a similar philosophy to the standard case, namely the definition
of a triplet of Hilbert spaces $(\spaceDiff_{-1},\spaceDiff_{0},\spaceDiff_{1})$
that satisfies the continuous embeddings (\ref{eq:=00005BDIFF=00005D_Triplet-Embedding})
as well as a suitable triplet of operators $(A,B,C)$.

The Hilbert spaces $\spaceDiff_{-1},$ $\spaceDiff_{0}$, and $\spaceDiff_{1}$
are defined as
\begin{alignat*}{1}
\spaceDiff_{1} & \coloneqq\left\{ \diff:\,(0,\infty)\rightarrow\spaceC\text{ measurable}\;\left|\;\int_{0}^{\infty}\vert\diff(\xi)\vert^{2}(1+\xi)\,\dinf\mu(\xi)<\infty\right.\right\} \\
\spaceDiff_{0} & \coloneqq\left\{ \diff:\,(0,\infty)\rightarrow\spaceC\text{ measurable}\;\left|\;\int_{0}^{\infty}\vert\diff(\xi)\vert^{2}\xi\,\dinf\mu(\xi)<\infty\right.\right\} \\
\spaceDiff_{-1} & \coloneqq\left\{ \diff:\,(0,\infty)\rightarrow\spaceC\text{ measurable}\;\left|\;\int_{0}^{\infty}\vert\diff(\xi)\vert^{2}\frac{\xi}{1+\xi^{2}}\,\dinf\mu(\xi)<\infty\right.\right\} ,
\end{alignat*}
with scalar products
\begin{alignat*}{1}
(\diff,\test)_{\spaceDiff_{1}} & \coloneqq\int_{0}^{\infty}(\diff(\xi),\test(\xi))_{\spaceC}(1+\xi)\,\dinf\mu(\xi)\\
(\diff,\test)_{\spaceDiff_{0}} & \coloneqq\int_{0}^{\infty}(\diff(\xi),\test(\xi))_{\spaceC}\,\xi\,\dinf\mu(\xi)\\
(\diff,\test)_{\spaceDiff_{-1}} & \coloneqq\int_{0}^{\infty}(\diff(\xi),\test(\xi))_{\spaceC}\frac{\xi}{1+\xi^{2}}\,\dinf\mu(\xi),
\end{alignat*}
so that the continuous embeddings (\ref{eq:=00005BDIFF=00005D_Triplet-Embedding})
are satisfied.  Note the change of definition of the energy space
$\spaceDiff_{0}$, which reflects the fact that the Lyapunov functional
of (\ref{eq:=00005BDIFF=00005D_Diffusive-Realization}) is different
from that of (\ref{eq:=00005BEXTDIFF=00005D_Extended-Diffusive-Realization}):
compare the energy balance (\ref{eq:=00005BDIFF=00005D_Application-Passivity})
with (\ref{eq:=00005BEXTDIFF=00005D_Application-Passivity}). The
change in the definition of $\spaceDiff_{-1}$ is a consequence of
this new definition of $\spaceDiff_{0}$. When $\dinf\mu$ is given
by (\ref{eq:=00005BDIFF=00005D_Diffusive-Weight-Fractional}), the
spaces $\spaceDiff_{0}$ and $\spaceDiff_{1}$ reduce to the spaces
``$\tilde{H}_{\alpha}$'' and ``$V_{\alpha}$'' defined in \cite[\S~3.2]{matignon2014asymptotic}.

The operators $A$, $B$, and $C$ satisfy (contrast with (\ref{eq:=00005BDIFF=00005D_ABC-Definition}))
\begin{equation}
A:\,\spaceD(A)\coloneqq\spaceDiff_{0}\subset\spaceDiff_{-1}\rightarrow\spaceDiff_{-1},\;B\in\spaceBounded(\spaceC,\spaceDiff_{-1}),\;C\in\spaceBounded(\spaceDiff_{1},\spaceC).\label{eq:=00005BEXTDIFF=00005D_ABC-Definition}
\end{equation}
 The state operator $A$ is still the multiplication operator (\ref{eq:=00005BDIFF=00005D_Multiplication-Operator-A}),
but with domain $\spaceDiff_{0}$ instead of $\spaceDiff_{1}$. Let
us check that this definition makes sense. For any $\diff\in\spaceDiff_{0}$,
we have
\begin{alignat}{1}
\Vert A\diff\Vert_{\spaceDiff_{-1}} & =\left[\int_{0}^{\infty}\vert\diff(\xi)\vert^{2}\frac{\xi^{3}}{1+\xi^{2}}\,\dinf\mu(\xi)\right]^{\nicefrac{1}{2}}\leq\Vert\diff\Vert_{\spaceDiff_{0}}.\label{eq:=00005BEXTDIFF=00005D_Estimate-Aphi}
\end{alignat}
The control operator $B$ is defined as (\ref{eq:=00005BDIFF=00005D_Application-Definition-B})
and we have for any $u\in\spaceC$
\begin{alignat*}{1}
\Vert Bu\Vert_{\spaceDiff_{-1}} & =\left[\int_{0}^{\infty}\vert u\vert^{2}\frac{\xi}{1+\xi^{2}}\,\dinf\mu(\xi)\right]^{\nicefrac{1}{2}}\leq\tilde{C}\left[\int_{0}^{\infty}\frac{1}{1+\xi}\,\dinf\mu(\xi)\right]^{\nicefrac{1}{2}}\vert u\vert,
\end{alignat*}
where the constant $\tilde{C}>0$ is
\[
\tilde{C}\coloneqq\left\Vert \frac{\xi(1+\xi)}{1+\xi^{2}}\right\Vert _{L^{\infty}(0,\infty)}.
\]
The observation operator $C$ is identical to the standard case. For
use in Section~\ref{sub:=00005BEXTDIFF=00005D_Asymptotic-stability},
properties of $(A,B,C)$ are gathered in Lemma~\ref{lem:=00005BEXTDIFF=00005D_Admissible}
below.
\begin{lem}
\label{lem:=00005BEXTDIFF=00005D_Multiplication-Operator-1}The operator
$A$ generates a strongly continuous semigroup of contractions on
$\spaceDiff_{-1}$ and satisfies  $\overline{\spaceC_{0}^{+}}\backslash\{0\}\subset\rho(A)$. \end{lem}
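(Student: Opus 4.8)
The plan is to reproduce the three ingredients of the proof of Lemma~\ref{lem:=00005BDIFF=00005D_Multiplication-Operator-A}, adapting them to the new energy space $\spaceDiff_{0}$ and the new domain $\mathcal D(A)=\spaceDiff_{0}\subset\spaceDiff_{-1}$. I would obtain the generation statement from the Lumer--Phillips theorem on $\spaceDiff_{-1}$, for which I must check that $A$ is densely defined, that $A$ is dissipative, and that the range condition $R(\lambda_{0}\opId-A)=\spaceDiff_{-1}$ holds for a single $\lambda_{0}>0$. The inclusion $\overline{\spaceC_{0}^{+}}\backslash\{0\}\subset\rho(A)$ would then follow from an explicit pointwise inversion of $s\opId-A$, which simultaneously supplies the range condition above.

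For dissipativity, note first that $A\diff\in\spaceDiff_{-1}$ for $\diff\in\spaceDiff_{0}$ by \eqref{eq:=00005BEXTDIFF=00005D_Estimate-Aphi}, and compute
\[
(A\diff,\diff)_{\spaceDiff_{-1}}=-\int_{0}^{\infty}\vert\diff(\xi)\vert^{2}\frac{\xi^{2}}{1+\xi^{2}}\,\dinf\mu(\xi)\le0,
\]
which is real, finite (since $\tfrac{\xi^{2}}{1+\xi^{2}}\le\xi$ and $\diff\in\spaceDiff_{0}$), and nonpositive, so $\Re(A\diff,\diff)_{\spaceDiff_{-1}}\le0$. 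For density of $\spaceDiff_{0}$ in $\spaceDiff_{-1}$, I would approximate $\srcdiff\in\spaceDiff_{-1}$ by its truncations $\srcdiff\,\mathbf 1_{\{\xi\le n\}}$: each truncation lies in $\spaceDiff_{0}$ because $\xi\le(1+n^{2})\frac{\xi}{1+\xi^{2}}$ on $\{\xi\le n\}$, while $\Vert\srcdiff-\srcdiff\,\mathbf 1_{\{\xi\le n\}}\Vert_{\spaceDiff_{-1}}\to0$ by dominated convergence. This step is insensitive to the measure conditions and uses only that $\spaceDiff_{-1}$ is a weighted $L^{2}$ space.

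For the range and resolvent-set claims, given $s\in\overline{\spaceC_{0}^{+}}\backslash\{0\}$ and $\srcdiff\in\spaceDiff_{-1}$, I would set $\diff(\xi)\coloneqq\frac{1}{s+\xi}\srcdiff(\xi)$, the only candidate solving $(s\opId-A)\diff=\srcdiff$ pointwise, noting $s+\xi\neq0$ throughout. The decisive point, and the place where the change of energy space makes itself felt, is to verify $\diff\in\spaceDiff_{0}=\mathcal D(A)$: squaring the estimate \eqref{eq:=00005BDIFF=00005D_FirstOrder-Estimate} and multiplying by $\xi$ gives
\[
\frac{\xi}{\vert s+\xi\vert^{2}}\le 2\max\!\left[1,\frac{1}{\vert s\vert^{2}}\right]\frac{\xi}{(1+\xi)^{2}}\le 2\max\!\left[1,\frac{1}{\vert s\vert^{2}}\right]\frac{\xi}{1+\xi^{2}},
\]
where the last inequality uses $(1+\xi)^{2}\ge1+\xi^{2}$; integrating against $\vert\srcdiff\vert^{2}\,\dinf\mu$ yields $\Vert\diff\Vert_{\spaceDiff_{0}}\le\sqrt2\max[1,1/\vert s\vert]\,\Vert\srcdiff\Vert_{\spaceDiff_{-1}}$. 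Hence $s\opId-A:\spaceDiff_{0}\to\spaceDiff_{-1}$ is a bijection with bounded inverse for every $s\in\overline{\spaceC_{0}^{+}}\backslash\{0\}$, which establishes $\overline{\spaceC_{0}^{+}}\backslash\{0\}\subset\rho(A)$ and, taking e.g. $\lambda_{0}=1$, the range condition. The Lumer--Phillips theorem (Theorem~\ref{thm:=00005BSTAB=00005D_Lumer-Phillips}) then produces the $C_{0}$-semigroup of contractions on $\spaceDiff_{-1}$. I expect the only real obstacle to be precisely this domain check: because the $\spaceDiff_{-1}$ weight is now $\frac{\xi}{1+\xi^{2}}$ rather than $\frac{1}{1+\xi}$, the algebraic comparison $\frac{\xi}{(1+\xi)^{2}}\le\frac{\xi}{1+\xi^{2}}$ is exactly what reconciles the resolvent bound with the new norm.
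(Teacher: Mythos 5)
Your proposal is correct and follows essentially the same route as the paper: the pointwise resolvent candidate $\diff=(s+\xi)^{-1}\srcdiff$, the squared form of estimate (\ref{eq:=00005BDIFF=00005D_FirstOrder-Estimate}) together with $(1+\xi)^{2}\geq1+\xi^{2}$ to show $\diff\in\spaceDiff_{0}$, dissipativity of $A$ in $\spaceDiff_{-1}$, and the Lumer--Phillips theorem. The only addition is your explicit density check of $\spaceDiff_{0}$ in $\spaceDiff_{-1}$, which the paper leaves implicit since its version of Lumer--Phillips (Theorem~\ref{thm:=00005BSTAB=00005D_Lumer-Phillips}) obtains density automatically from reflexivity.
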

\begin{proof}
The proof is similar to that of Lemma~\ref{lem:=00005BDIFF=00005D_Multiplication-Operator-A}.
Let $s\in\overline{\spaceC_{0}^{+}}\backslash\{0\}$ and $\srcdiff\in\spaceDiff_{-1}$.
Let us define $\diff$ by (\ref{eq:=00005BDIFF=00005D_Multiplication-Operator-Resolvent}).
(a) We have
\begin{alignat*}{1}
\Vert\diff\Vert_{\spaceDiff_{0}} & =\left[\int_{0}^{\infty}\left|\frac{1}{s+\xi}\srcdiff\right|^{2}\xi\,\dinf\mu(\xi)\right]^{\nicefrac{1}{2}}\\
 & \leq\sqrt{2}\max\left[1,\frac{1}{\vert s\vert}\right]\left\Vert \frac{1+\xi^{2}}{(1+\xi)^{2}}\right\Vert _{L^{\infty}(0,\infty)}\Vert\srcdiff\Vert_{\spaceDiff_{-1}},
\end{alignat*}
so that $\diff$ solves $(s\opId-A)\diff=\srcdiff$ in $\spaceDiff_{0}$.
Since $s\opId-A$ is injective, we deduce that $s\in\rho(A)$. (b)
Let $\diff\in\spaceDiff_{0}$. We have
\[
(A\diff,\diff)_{\spaceDiff_{-1}}=-\int_{0}^{\infty}\vert\diff(\xi)\vert^{2}\frac{\xi^{2}}{1+\xi^{2}}\,\dinf\mu(\xi)\leq-\Vert\diff\Vert_{\spaceDiff_{0}}^{2},
\]
so that $A$ is dissipative. The conclusion follows from the Lumer-Phillips
theorem.
\end{proof}

\begin{lem}
\label{lem:=00005BEXTDIFF=00005D_Multiplication-Operator-2}The operators
$A$ and $B$ are injective. Moreover, if (\ref{eq:=00005BEXTDIFF=00005D_Mu-Condition})
holds, then $R(A)\cap R(B)=\{0\}$.\end{lem}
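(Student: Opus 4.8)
The plan is to dispatch the two injectivity claims by direct computation from the definitions of $A$ and $B$---neither needs hypothesis~(\ref{eq:=00005BEXTDIFF=00005D_Mu-Condition})---and then to concentrate all the effort on the range intersection, which is exactly where~(\ref{eq:=00005BEXTDIFF=00005D_Mu-Condition}) enters decisively. Both injectivity arguments mirror the standard case treated in Lemma~\ref{lem:=00005BDIFF=00005D_Multiplication-Operator-A} and introduce no new idea.

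For $A$ I would argue as follows. Since $A\diff=(\xi\mapsto-\xi\diff(\xi))$, the identity $A\diff=0$ in $\spaceDiff_{-1}$ reads $\int_0^\infty\xi^3\vert\diff(\xi)\vert^2(1+\xi^2)^{-1}\,\dinf\mu(\xi)=0$; as the weight $\xi^3(1+\xi^2)^{-1}$ is strictly positive for $\xi>0$, this forces $\diff=0$ $\mu$-a.e. on $(0,\infty)$, i.e. $\diff=0$ in $\spaceDiff_0$. For $B$, the equality $Bu=0$ in $\spaceDiff_{-1}$ is $\vert u\vert^2\int_0^\infty\xi(1+\xi^2)^{-1}\,\dinf\mu(\xi)=0$, and since $\mu$ carries positive mass on $(0,\infty)$ the integral is strictly positive, whence $u=0$.

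The crux is $R(A)\cap R(B)=\{0\}$. The key observation is that $R(B)$ consists precisely of the constant functions $\xi\mapsto u$, which lie in $\spaceDiff_{-1}$ since $B\in\spaceBounded(\spaceC,\spaceDiff_{-1})$. I would then take $\psi\in R(A)\cap R(B)$, so that $\psi$ equals some constant $u$ and, at the same time, $\psi=A\diff=-\xi\diff$ for some $\diff\in\spaceDiff_0$. Because $A$ is injective, the preimage is uniquely determined: equating the two representations $\mu$-a.e. on $(0,\infty)$ gives $\diff(\xi)=-u/\xi$. The whole point is that the admissibility $\diff\in\spaceDiff_0$ of this preimage is governed exactly by the integral in~(\ref{eq:=00005BEXTDIFF=00005D_Mu-Condition}):
\[
\Vert\diff\Vert_{\spaceDiff_0}^2=\int_0^\infty\frac{\vert u\vert^2}{\xi^2}\,\xi\,\dinf\mu(\xi)=\vert u\vert^2\int_0^\infty\frac{\dinf\mu(\xi)}{\xi}.
\]
Under~(\ref{eq:=00005BEXTDIFF=00005D_Mu-Condition}) the right-hand side is infinite unless $u=0$, so the only constant belonging to $R(A)$ is $0$ and the intersection is trivial.

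The main obstacle is conceptual rather than computational: one must recognize that the unique $A$-preimage of a constant is $\xi\mapsto-u/\xi$ (legitimate precisely because $A$ is injective) and that its $\spaceDiff_0$-norm is \emph{exactly} the divergent integral of~(\ref{eq:=00005BEXTDIFF=00005D_Mu-Condition}). A minor point of care is the $\mu$-a.e. identification near $\xi=0$: since the energy weight $\xi$ of $\spaceDiff_0$ vanishes there, values of $\diff$ on an atom $\{0\}$ are invisible, so all equalities above should be read $\mu$-a.e. on $(0,\infty)$; this is harmless here, and it is precisely this failure of $0$ to be a regular point of $A$---in contrast to the standard case, where $0\in\rho(A)$ may hold---that makes the separate treatment of extended diffusive kernels necessary.
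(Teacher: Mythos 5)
Your proposal is correct and follows essentially the same route as the paper: the injectivity claims are immediate from the strictly positive weights in the $\spaceDiff_{-1}$ norm, and the range intersection is handled exactly as in the paper by solving $-\xi\diff(\xi)=u$ for $\diff(\xi)=-u/\xi$ and observing that $\Vert\diff\Vert_{\spaceDiff_{0}}^{2}=\vert u\vert^{2}\int_{0}^{\infty}\xi^{-1}\,\dinf\mu(\xi)$, which is finite only for $u=0$ under (\ref{eq:=00005BEXTDIFF=00005D_Mu-Condition}). The paper simply states the injectivity as immediate, so your added detail (and the harmless remark on $\mu$-a.e.\ identification) is a faithful expansion rather than a different argument.
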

\begin{proof}
The injectivity of $A$ and $B$ is immediate. Let $\srcdiff\in R(A)\cap R(B)$,
so that there is $\diff\in\spaceDiff_{0}$ and $u\in\spaceC$ such
that $A\diff=Bu$, i.e. $-\xi\diff(\xi)=u$ a.e. on $(0,\infty)$.
The function $\diff$ belongs to $\spaceDiff_{0}$ if and only if
\[
\vert u\vert^{2}\int_{0}^{\infty}\frac{1}{\xi}\dinf\mu(\xi)<\infty.
\]
So that, assuming (\ref{eq:=00005BEXTDIFF=00005D_Mu-Condition}),
$\diff$ belongs to $\spaceDiff_{0}$ if and only if $u=0$ a.e on
$(0,\infty)$.\end{proof}
\begin{lem}
\label{lem:=00005BEXTDIFF=00005D_Admissible}The triplet of operators
$(A,B,C)$ defined above satisfies (\ref{eq:=00005BEXTDIFF=00005D_ABC-Definition})
as well as the following properties.
\begin{enumerate}
\item[\textup{(i)}] (Stability) $A$ is closed with $\overline{\spaceC_{0}^{+}}\backslash\{0\}\subset\rho(A)$
and satisfies
\begin{equation}
\forall(\diff,u)\in\spaceD(C\&D),\;A\diff=Bu\Rightarrow(\diff,u)=(0,0),\label{eq:=00005BEXTDIFF=00005D_Definition-Injectivity}
\end{equation}
where we define
\[
\spaceD(C\&D)\coloneqq\left\{ (\diff,u)\in\spaceDiff_{0}\times\spaceC\;\vert\;A\diff+Bu\in\spaceDiff_{1}\right\} .
\]

\item[\textup{(ii)}] (Regularity)

\begin{enumerate}
\item[\textup{(a)}] $A\in\spaceBounded(\spaceDiff_{0},\spaceDiff_{-1})$.
\item[\textup{(b)}] For any $s\in\overline{\spaceC_{0}^{+}}\backslash\{0\}$,
\begin{equation}
AR(s,A)_{\vert\spaceDiff_{0}}\in\spaceBounded(\spaceDiff_{0},\spaceDiff_{1}),\;R(s,A)B\in\spaceBounded(\spaceC,\spaceDiff_{1}).\label{eq:=00005BEXTDIFF=00005D_Definition-Regularity}
\end{equation}

\end{enumerate}
\item[\textup{(iii)}] (Reality) Identical to Lemma~\ref{lem:=00005BDIFF=00005D_Admissible}(iii).
\item[\textup{(iv)}] (Passivity) For any $(\diff,u)\in\spaceD(C\&D)$,
\begin{equation}
\Re\left[(A\diff+Bu,\diff)_{\spaceDiff_{0}}-(u,C(A\diff+Bu))_{\spaceC}\right]\leq0.\label{eq:=00005BEXTDIFF=00005D_Definition-Dissipativity}
\end{equation}

\end{enumerate}
\end{lem}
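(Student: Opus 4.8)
The plan is to verify the four listed properties in turn. The membership relations (\ref{eq:=00005BEXTDIFF=00005D_ABC-Definition}) follow from the norm bounds established just before the statement, so I would begin from those. Most of the remaining work consists of weighted-$L^{2}(\dinf\mu)$ estimates that parallel the standard-case Lemma~\ref{lem:=00005BDIFF=00005D_Admissible}, but carried out with the new weights $\xi$ (for $\spaceDiff_{0}$) and $\xi/(1+\xi^{2})$ (for $\spaceDiff_{-1}$). Two features are genuinely new and deserve attention: the mapping regularity (ii)(b), where $AR(s,A)$ and $R(s,A)B$ must now land in $\spaceDiff_{1}$ rather than merely in $\spaceDiff_{0}$, and the passivity estimate (iv), where the observation and feedthrough are fused into the single functional $C(A\diff+Bu)$.

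For the stability property (i), closedness of $A$ and the inclusion $\overline{\spaceC_{0}^{+}}\backslash\{0\}\subset\rho(A)$ are exactly the content of Lemma~\ref{lem:=00005BEXTDIFF=00005D_Multiplication-Operator-1}. To obtain the implication (\ref{eq:=00005BEXTDIFF=00005D_Definition-Injectivity}), I would take $(\diff,u)\in\spaceD(C\&D)\subset\spaceDiff_{0}\times\spaceC$ with $A\diff=Bu$; the common value then lies in $R(A)\cap R(B)$, which is $\{0\}$ by Lemma~\ref{lem:=00005BEXTDIFF=00005D_Multiplication-Operator-2}. This is precisely the place where the extended-case hypothesis (\ref{eq:=00005BEXTDIFF=00005D_Mu-Condition}) is used. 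Injectivity of $A$ and of $B$ then forces $\diff=0$ and $u=0$. Property (ii)(a) is the bound (\ref{eq:=00005BEXTDIFF=00005D_Estimate-Aphi}) already recorded above, and (iii) reproduces verbatim the standard-case computation, so both are immediate.

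For (ii)(b), writing $R(s,A)\srcdiff$ as $\xi\mapsto\srcdiff(\xi)/(s+\xi)$, the first inclusion reduces to bounding
\[
\Vert AR(s,A)\srcdiff\Vert_{\spaceDiff_{1}}^{2}=\int_{0}^{\infty}\Bigl|\tfrac{\xi}{s+\xi}\srcdiff(\xi)\Bigr|^{2}(1+\xi)\,\dinf\mu(\xi)
\]
by a multiple of $\Vert\srcdiff\Vert_{\spaceDiff_{0}}^{2}=\int_{0}^{\infty}|\srcdiff(\xi)|^{2}\,\xi\,\dinf\mu(\xi)$; the estimate (\ref{eq:=00005BDIFF=00005D_FirstOrder-Estimate}) yields the pointwise bound $\xi(1+\xi)/|s+\xi|^{2}\leq 2\max[1,|s|^{-2}]\,\xi/(1+\xi)\leq 2\max[1,|s|^{-2}]$, which closes the argument. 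For $R(s,A)B$ I would compute $\Vert R(s,A)Bu\Vert_{\spaceDiff_{1}}^{2}=|u|^{2}\int_{0}^{\infty}(1+\xi)|s+\xi|^{-2}\,\dinf\mu(\xi)$ and apply (\ref{eq:=00005BDIFF=00005D_FirstOrder-Estimate}) once more to reduce the integrand to $2\max[1,|s|^{-2}]/(1+\xi)$, which is integrable by the well-posedness condition (\ref{eq:=00005BDIFF=00005D_Well-posedness-Condition}).

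I expect the passivity (iv) to be the main step, since it is here that the combined $C\&D$ structure forces a computation unlike the standard case. Given $(\diff,u)\in\spaceD(C\&D)$, I would set $w\coloneqq A\diff+Bu$; by definition of $\spaceD(C\&D)$ one has $w\in\spaceDiff_{1}$, which is exactly what is needed for $Cw$ to be defined, and pointwise $w(\xi)=-\xi\diff(\xi)+u$. Expanding the two inner products (recalling that they are antilinear in the second argument) gives
\[
(w,\diff)_{\spaceDiff_{0}}-(u,Cw)_{\spaceC}=\int_{0}^{\infty}\bigl(-\xi^{2}|\diff|^{2}+u\xi\overline{\diff}\bigr)\,\dinf\mu-\int_{0}^{\infty}\bigl(-u\xi\overline{\diff}+|u|^{2}\bigr)\,\dinf\mu,
\]
and completing the square shows that its real part equals $-\int_{0}^{\infty}|{-\xi\diff(\xi)+u}|^{2}\,\dinf\mu(\xi)\leq0$, the integral being finite because $w\in\spaceDiff_{1}$. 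This identity is the extended-case analogue of (\ref{eq:=00005BDIFF=00005D_Application-Passivity}) and establishes (\ref{eq:=00005BEXTDIFF=00005D_Definition-Dissipativity}). The one subtlety to keep in mind throughout is that $C$ is defined only on $\spaceDiff_{1}$, so it is essential to confirm $w\in\spaceDiff_{1}$ before manipulating $Cw$---a point that is built into the definition of $\spaceD(C\&D)$ and is the reason the observation and feedthrough cannot be separated in the extended case.
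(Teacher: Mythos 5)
Your proposal is correct and follows essentially the same route as the paper: (i) via Lemmas~\ref{lem:=00005BEXTDIFF=00005D_Multiplication-Operator-1} and \ref{lem:=00005BEXTDIFF=00005D_Multiplication-Operator-2}, (iia) via (\ref{eq:=00005BEXTDIFF=00005D_Estimate-Aphi}), (iib) via the weighted pointwise bound coming from (\ref{eq:=00005BDIFF=00005D_FirstOrder-Estimate}) together with (\ref{eq:=00005BDIFF=00005D_Well-posedness-Condition}), and (iv) by the same algebraic identity reducing the real part to $-\int_{0}^{\infty}\vert-\xi\diff(\xi)+u\vert^{2}\,\dinf\mu(\xi)$. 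The only differences are presentational (you spell out the $R(A)\cap R(B)=\{0\}$ step and phrase (iv) as completing the square), and your remark that $w=A\diff+Bu\in\spaceDiff_{1}$ is what legitimizes $Cw$ is exactly the point the paper's $\spaceD(C\&D)$ construction encodes.
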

\begin{proof}
Let $(A,B,C)$ be as defined above. Each of the properties is proven
below.
\begin{enumerate}
\item[(i)]  Follows from Lemmas~\ref{lem:=00005BEXTDIFF=00005D_Multiplication-Operator-1}
and \ref{lem:=00005BEXTDIFF=00005D_Multiplication-Operator-2}.
\item[(iia)]  Follows from (\ref{eq:=00005BEXTDIFF=00005D_Estimate-Aphi}).
\item[(iib)]  Let $s\in\overline{\spaceC_{0}^{+}}\backslash\{0\}$, $\srcdiff\in\spaceDiff_{0}$,
and $u\in\spaceC$. We have
\begin{alignat*}{1}
\Vert AR(s,A)\srcdiff\Vert_{\spaceDiff_{1}} & =\left[\int_{0}^{\infty}\vert\srcdiff(\xi)\vert^{2}\frac{\xi^{2}(1+\xi)}{\vert s+\xi\vert^{2}}\,\dinf\mu(\xi)\right]^{\nicefrac{1}{2}}\\
 & \leq\sqrt{2}\max\left[1,\frac{1}{\vert s\vert}\right]\Vert\srcdiff\Vert_{\spaceDiff_{0}},
\end{alignat*}
and
\begin{alignat*}{1}
\Vert R(s,A)Bu\Vert_{\spaceDiff_{1}} & =\left(\int_{0}^{\infty}\frac{1+\xi}{\vert s+\xi\vert^{2}}\,\dinf\mu(\xi)\right)^{\nicefrac{1}{2}}\vert u\vert\\
 & \leq\sqrt{2}\max\left[1,\frac{1}{\vert s\vert}\right]\left(\int_{0}^{\infty}\frac{1}{1+\xi}\,\dinf\mu(\xi)\right)^{\nicefrac{1}{2}}\vert u\vert.
\end{alignat*}

\item[(iii)]  Immediate.
\item[(iv)]  Let $(\diff,u)\in\spaceD(C\&D)$. We have
\begin{alignat}{1}
\Re\Bigl[(A\diff+ & Bu,\diff)_{\spaceDiff_{0}}-(u,C(A\diff+Bu))_{\spaceC}\Bigr]\nonumber \\
 & =\Re\left[\int_{0}^{\infty}(-\xi\diff(\xi)+u,\diff(\xi))_{\spaceC}\,\xi\,\dinf\mu(\xi)-\left(u,\int_{0}^{\infty}(-\xi\diff(\xi)+u)\,\dinf\mu(\xi)\right)_{\spaceC}\right]\nonumber \\
 & =\Re\left[\int_{0}^{\infty}(-\xi\diff(\xi)+u,\xi\diff(\xi)-u)_{\spaceC}\,\dinf\mu(\xi)\right]\nonumber \\
 & =-\Re\left[\int_{0}^{\infty}\vert-\xi\diff(\xi)+u\vert^{2}\,\dinf\mu(\xi)\right]\leq0.\label{eq:=00005BEXTDIFF=00005D_Application-Passivity}
\end{alignat}

\end{enumerate}
\end{proof}
The remarks made for the standard case hold identically (in particular,
$\spaceD(C\&D)$ is nonempty). For $s\in\rho(A)$ we define
\begin{equation}
\hfun(s)\coloneqq s\,CR(s,A)B.\label{eq:=00005BEXTDIFF=00005D_Laplace}
\end{equation}

\subsection{Asymptotic stability\label{sub:=00005BEXTDIFF=00005D_Asymptotic-stability}}

Let $(A,B,C)$ be the triplet of operators defined in Section~\ref{sub:=00005BEXTDIFF=00005D_Abstract-realization},
further assumed to be non-null. The abstract Cauchy problem (\ref{eq:=00005BSTAB=00005D_Abstract-Cauchy-Problem})
considered herein is the following. The state space is
\begin{equation}
\begin{gathered}\spaceState\coloneqq\nabla H^{1}(\Omega)\times L^{2}(\Omega)\times L^{2}(\partial\Omega;\spaceDiff_{0}),\\
((\uac,\pac,\diff),(\srcuac,\srcpac,\srcdiff))_{\spaceState}\coloneqq(\uac,\srcuac)+(\pac,\srcpac)+(\diff,\srcdiff)_{L^{2}(\partial\Omega;\spaceDiff_{0})},
\end{gathered}
\label{eq:=00005BEXTDIFF=00005D_State-Space}
\end{equation}
and $\opA$ is defined as
\begin{equation}
\begin{gathered}\spaceDomain\ni\state\coloneqq\left(\begin{array}{c}
\uac\\
\pac\\
\diff
\end{array}\right)\longmapsto\opA\state\coloneqq\left(\begin{array}{c}
-\nabla\pac\\
-\opDiv\uac\\
A\diff+B\uac\cdot\normal
\end{array}\right),\\
\spaceDomain\coloneqq\left\{ (\uac,\pac,\diff)\in\spaceState\;\left|\;\begin{alignedat}{1} & (\uac,\pac)\in H_{\opDiv}(\Omega)\times H^{1}(\Omega)\\
 & (A\diff+B\uac\cdot\normal)\in L^{2}(\partial\Omega;\spaceDiff_{1})\\
 & \pac=C(A\diff+B\uac\cdot\normal)\;\text{in }H^{\frac{1}{2}}(\partial\Omega)
\end{alignedat}
\right.\right\} .
\end{gathered}
\label{eq:=00005BEXTDIFF=00005D_Definition-A}
\end{equation}
The technicality here is that the operator $(\diff,u)\mapsto C(A\diff+Bu)$
is defined over $\spaceD(C\&D)$, but $CB$ is not defined in general:
this is the abstract counterpart of (\ref{eq:=00005BEXTDIFF=00005D_Infinite-Integral}).
An immediate consequence of the definition of $\spaceDomain$ is given
in the following lemma.
\begin{lem}[Boundary regularity]
\label{lem:=00005BEXTDIFF=00005D_Boundary-Regularity-u}If $\state=(\uac,\pac,\diff)\in\spaceDomain$,
then $\uac\cdot\normal\in L^{2}(\partial\Omega)$.\end{lem}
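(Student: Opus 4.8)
The statement is the exact analogue of the standard-diffusive boundary-regularity lemma (Lemma~\ref{lem:=00005BDIFF=00005D_Boundary-Regularity-u}), so the plan is to reproduce that argument, adapting the index bookkeeping to the modified triplet $(\spaceDiff_{-1},\spaceDiff_0,\spaceDiff_1)$ and to the fact that here $A$ is defined on $\spaceDiff_0$ rather than $\spaceDiff_1$. The guiding idea is to isolate $B\uac\cdot\normal$ as a difference of two quantities whose $\spaceDiff_{-1}$-regularity is already known, and then to read off the regularity of $\uac\cdot\normal$ from the explicit expression of the $\spaceDiff_{-1}$-norm of $Bu$.

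Concretely, I would proceed in three steps. First, since $\state\in\spaceState$ forces $\diff\in L^2(\partial\Omega;\spaceDiff_0)$, and since $A\in\spaceBounded(\spaceDiff_0,\spaceDiff_{-1})$ by Lemma~\ref{lem:=00005BEXTDIFF=00005D_Admissible}(iia), the Bochner-integral property recalled in Remark~\ref{rem:=00005BDELAY=00005D_Bochner-Integration} yields $A\diff\in L^2(\partial\Omega;\spaceDiff_{-1})$. Second, the defining conditions of $\spaceDomain$ give $A\diff+B\uac\cdot\normal\in L^2(\partial\Omega;\spaceDiff_1)$, which by the continuous embedding $\spaceDiff_1\subset\spaceDiff_{-1}$ from (\ref{eq:=00005BDIFF=00005D_Triplet-Embedding}) also lies in $L^2(\partial\Omega;\spaceDiff_{-1})$. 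Writing $B\uac\cdot\normal=(A\diff+B\uac\cdot\normal)-A\diff$ then exhibits $B\uac\cdot\normal$ as a difference of two elements of $L^2(\partial\Omega;\spaceDiff_{-1})$, hence $B\uac\cdot\normal\in L^2(\partial\Omega;\spaceDiff_{-1})$.

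For the third and final step I would return to $\uac\cdot\normal$ itself. From the definition of $B$, the constant function $Bu$ satisfies $\Vert Bu\Vert_{\spaceDiff_{-1}}=c\,\vert u\vert$ with $c^2=\int_0^\infty \frac{\xi}{1+\xi^2}\,\dinf\mu(\xi)$, the well-posedness condition (\ref{eq:=00005BDIFF=00005D_Well-posedness-Condition}) making $c$ finite. Applying this pointwise for a.e. $\coordx\in\partial\Omega$ and integrating the resulting identity $\Vert B\uac\cdot\normal(\coordx)\Vert_{\spaceDiff_{-1}}^2=c^2\,\vert\uac(\coordx)\cdot\normal\vert^2$ over $\partial\Omega$ gives $\Vert\uac\cdot\normal\Vert_{L^2(\partial\Omega)}^2=c^{-2}\Vert B\uac\cdot\normal\Vert_{L^2(\partial\Omega;\spaceDiff_{-1})}^2<\infty$, which is the claim.

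The argument is essentially routine, so I expect no genuine obstacle; the one point requiring care is the invertibility used in the last step, namely that $c>0$. This is where the standing assumption that $B$ is non-null enters: it guarantees $\int_0^\infty \frac{\xi}{1+\xi^2}\,\dinf\mu(\xi)>0$, so that control of $\Vert B\uac\cdot\normal\Vert_{\spaceDiff_{-1}}$ genuinely upgrades to control of $\vert\uac\cdot\normal\vert$. I would also double-check that the index shifts relative to the standard case (the sum now landing in $\spaceDiff_1$ instead of $\spaceDiff_0$, and $A$ acting $\spaceDiff_0\to\spaceDiff_{-1}$ instead of $\spaceDiff_1\to\spaceDiff_{-1}$) are consistent, but these affect only the bookkeeping, not the structure of the proof.
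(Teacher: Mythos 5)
Your proposal is correct and follows the paper's own argument exactly: the paper likewise notes that $\diff\in L^{2}(\partial\Omega;\spaceDiff_{0})$ gives $A\diff\in L^{2}(\partial\Omega;\spaceDiff_{-1})$ via Lemma~\ref{lem:=00005BEXTDIFF=00005D_Admissible}(iia) and Remark~\ref{rem:=00005BDELAY=00005D_Bochner-Integration}, then refers back to the standard-diffusive proof, which uses precisely your decomposition $B\uac\cdot\normal=(A\diff+B\uac\cdot\normal)-A\diff$ and concludes from the definition of $B$ and condition (\ref{eq:=00005BDIFF=00005D_Well-posedness-Condition}). Your only addition is to make explicit that the constant $c$ must be strictly positive, which the paper leaves implicit in its standing assumption that $B$ is non-null; this is a correct and worthwhile clarification, not a different argument.
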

\begin{proof}
Let $\state\in\spaceDomain$. By definition of $\spaceDomain$, we
have $\diff\in L^{2}(\partial\Omega;\spaceDiff_{0})$ so that $A\diff\in L^{2}(\partial\Omega;\spaceDiff_{-1})$
from Lemma~\ref{lem:=00005BEXTDIFF=00005D_Admissible}(iia) and Remark~\ref{rem:=00005BDELAY=00005D_Bochner-Integration}.
The proof is then identical to that of Lemma~\ref{lem:=00005BDIFF=00005D_Boundary-Regularity-u}.
\end{proof}
The application of Corollary~\ref{cor:=00005BSTAB=00005D_Asymptotic-Stability}
is summarized in the lemmas below, namely Lemmas~\ref{lem:=00005BEXTDIFF=00005D_Dissipativity},
\ref{lem:=00005BEXTDIFF=00005D_Injectivity}, and \ref{lem:=00005BEXTDIFF=00005D_Bijectivity}.
Due to the similarities with the standard case, the proofs are more
concise and focus on the differences.
\begin{lem}
\label{lem:=00005BEXTDIFF=00005D_Dissipativity}The operator $\opA$
defined by (\ref{eq:=00005BEXTDIFF=00005D_Definition-A}) is dissipative.\end{lem}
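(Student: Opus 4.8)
The plan is to mirror, almost verbatim, the standard-case dissipativity argument of Lemma~\ref{lem:=00005BDIFF=00005D_Dissipativity}; the only genuine novelty is that the observation now reads $C(A\diff+B\uac\cdot\normal)$ rather than $C\diff$, so that the passivity estimate to invoke is (\ref{eq:=00005BEXTDIFF=00005D_Definition-Dissipativity}) instead of (\ref{eq:=00005BDIFF=00005D_Definition-Dissipativity}). First I would fix $\state=(\uac,\pac,\diff)\in\spaceDomain$ and record, via Lemma~\ref{lem:=00005BEXTDIFF=00005D_Boundary-Regularity-u}, that $\uac\cdot\normal\in L^{2}(\partial\Omega)$. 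This upgrade of the normal trace from $H^{-\frac{1}{2}}(\partial\Omega)$ to $L^{2}(\partial\Omega)$ is precisely what turns the boundary duality pairings below into honest $L^{2}(\partial\Omega)$ scalar products.

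Next I would expand $\Re(\opA\state,\state)_{\spaceState}$ componentwise. The first two entries of $\opA\state$ contribute $\Re[(-\nabla\pac,\uac)+(-\opDiv\uac,\pac)]$, which collapses, by Green's formula (\ref{eq:=00005BPRE=00005D_Green-Formula}) followed by the trace regularity above, into the single boundary term $-\Re(\uac\cdot\normal,\pac)_{L^{2}(\partial\Omega)}$ (the imaginary cross-terms cancel upon taking the real part). The third entry contributes $\Re(A\diff+B\uac\cdot\normal,\diff)_{L^{2}(\partial\Omega;\spaceDiff_{0})}$. Substituting the impedance boundary condition $\pac=C(A\diff+B\uac\cdot\normal)$ (the third line of $\spaceDomain$) then yields
\[
\Re(\opA\state,\state)_{\spaceState}=\Re\Bigl[(A\diff+B\uac\cdot\normal,\diff)_{L^{2}(\partial\Omega;\spaceDiff_{0})}-(\uac\cdot\normal,C(A\diff+B\uac\cdot\normal))_{L^{2}(\partial\Omega)}\Bigr].
\]
At this point I would read the right-hand side as the $L^{2}(\partial\Omega)$-integral of the pointwise passivity functional of (\ref{eq:=00005BEXTDIFF=00005D_Definition-Dissipativity}), evaluated for a.e. $\coordx\in\partial\Omega$ at the pair $(\diff(\coordx,\cdot),\uac(\coordx)\cdot\normal(\coordx))$, and conclude $\Re(\opA\state,\state)_{\spaceState}\le0$.

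The step requiring the most care is this last one: legitimising the passage from the pointwise inequality (\ref{eq:=00005BEXTDIFF=00005D_Application-Passivity}) to its integrated form. I would check that, for a.e. $\coordx$, the pair $(\diff(\coordx,\cdot),\uac(\coordx)\cdot\normal(\coordx))$ lies in $\spaceD(C\&D)$; this is exactly what the membership $A\diff+B\uac\cdot\normal\in L^{2}(\partial\Omega;\spaceDiff_{1})$ imposed in $\spaceDomain$ provides, since it forces $A\diff(\coordx,\cdot)+B\uac(\coordx)\cdot\normal(\coordx)\in\spaceDiff_{1}$ a.e. Because $C\in\spaceBounded(\spaceDiff_{1},\spaceC)$, the map $\coordx\mapsto C(A\diff+B\uac\cdot\normal)(\coordx)$ belongs to $L^{2}(\partial\Omega)$ by Remark~\ref{rem:=00005BDELAY=00005D_Bochner-Integration}, so the boundary pairing is well-defined, while the embedding $\spaceDiff_{1}\subset\spaceDiff_{0}$ makes the $\spaceDiff_{0}$-pairing meaningful. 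Using that both boundary scalar products are Bochner integrals over $\partial\Omega$ of the corresponding pointwise $\spaceC$- and $\spaceDiff_{0}$-pairings, the bracket above equals $-\int_{\partial\Omega}\int_{0}^{\infty}\vert-\xi\diff(\coordx,\xi)+\uac(\coordx)\cdot\normal(\coordx)\vert^{2}\,\dinf\mu(\xi)\,\dinf\coordx\le0$, which completes the argument. Note that, unlike in the standard case, one cannot split this into an $A\diff$ part and a $B\uac\cdot\normal$ part, reflecting (\ref{eq:=00005BEXTDIFF=00005D_Infinite-Integral}); it is essential to keep $A\diff+B\uac\cdot\normal$ grouped throughout.
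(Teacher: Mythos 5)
Your proposal is correct and follows essentially the same route as the paper's proof: boundary regularity of $\uac\cdot\normal$ from Lemma~\ref{lem:=00005BEXTDIFF=00005D_Boundary-Regularity-u}, Green's formula to reduce to a boundary term, substitution of the IBC, and the integrated form of the passivity inequality (\ref{eq:=00005BEXTDIFF=00005D_Definition-Dissipativity}). The paper states this in two lines; you merely spell out the (correct) verification that $(\diff(\coordx,\cdot),\uac(\coordx)\cdot\normal(\coordx))\in\spaceD(C\&D)$ a.e., which the paper leaves implicit.
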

\begin{proof}
Let $\state\in\spaceDomain$. In particular, $\uac\cdot\normal\in L^{2}(\partial\Omega)$
from Lemma~\ref{lem:=00005BEXTDIFF=00005D_Boundary-Regularity-u}.
Green's formula (\ref{eq:=00005BPRE=00005D_Green-Formula}) and (\ref{eq:=00005BEXTDIFF=00005D_Definition-Dissipativity})
yield
\begin{alignat*}{1}
\Re(\opA\state,\state)_{\spaceState}=\Re\Bigl[(A\diff & +B\uac\cdot\normal,\diff)_{L^{2}(\partial\Omega;\spaceDiff_{0})}\\
 & -(\uac\cdot\normal,C(A\diff+B\uac\cdot\normal)){}_{L^{2}(\partial\Omega)}\Bigr]\leq0,
\end{alignat*}
using Lemma~\ref{lem:=00005BEXTDIFF=00005D_Admissible}.
\end{proof}
The next proof is much simpler than in the standard case.
\begin{lem}
\label{lem:=00005BEXTDIFF=00005D_Injectivity}$\opA$, given by (\ref{eq:=00005BEXTDIFF=00005D_Definition-A}),
is injective.\end{lem}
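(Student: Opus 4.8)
The plan is to exploit the fact that, in the extended case, condition (\ref{eq:=00005BEXTDIFF=00005D_Mu-Condition}) makes the third equation of $\opA\state=0$ annihilate both $\diff$ and $\uac\cdot\normal$ on its own, so that---unlike the standard case---no argument through the observation operator $C$ or through Green's formula is needed. First I would take $\state=(\uac,\pac,\diff)\in\spaceDomain$ with $\opA\state=0$, which yields $\nabla\pac=\vector 0$, $\opDiv\uac=0$, and the boundary identity
\[
A\diff+B\uac\cdot\normal=0\quad\text{in }L^2(\partial\Omega;\spaceDiff_1),
\]
the regularity $A\diff+B\uac\cdot\normal\in L^2(\partial\Omega;\spaceDiff_1)$ being part of the definition of $\spaceDomain$. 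By Lemma~\ref{lem:=00005BEXTDIFF=00005D_Boundary-Regularity-u}, $\uac\cdot\normal\in L^2(\partial\Omega)$, so that $\uac\cdot\normal(\coordx)\in\spaceC$ and $\diff(\coordx,\cdot)\in\spaceDiff_0$ for a.e. $\coordx\in\partial\Omega$.

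The core step is then pointwise in $\coordx$. For a.e. $\coordx\in\partial\Omega$, the identity above reads $A\diff(\coordx,\cdot)=-B\bigl(\uac\cdot\normal(\coordx)\bigr)$, exhibiting $A\diff(\coordx,\cdot)$ as a common element of $R(A)$ and $R(B)$. Invoking Lemma~\ref{lem:=00005BEXTDIFF=00005D_Multiplication-Operator-2}, which gives $R(A)\cap R(B)=\{0\}$ precisely because (\ref{eq:=00005BEXTDIFF=00005D_Mu-Condition}) holds, I would conclude $A\diff(\coordx,\cdot)=0$ and $B\bigl(\uac\cdot\normal(\coordx)\bigr)=0$; the injectivity of $A$ and of $B$ (same lemma) then forces $\diff(\coordx,\cdot)=0$ and $\uac\cdot\normal(\coordx)=0$. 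Hence $\diff=0$ in $L^2(\partial\Omega;\spaceDiff_0)$ and $\uac\cdot\normal=0$ in $L^2(\partial\Omega)$. Equivalently, this is the pointwise-in-$\coordx$ application of the injectivity property (\ref{eq:=00005BEXTDIFF=00005D_Definition-Injectivity}) from Lemma~\ref{lem:=00005BEXTDIFF=00005D_Admissible}(i).

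Finally I would recover the remaining components. The relations $\uac\cdot\normal=0$ and $\opDiv\uac=0$ place $\uac$ in $H_{\opDiv0,0}(\Omega)$; since also $\uac\in\nabla H^1(\Omega)$, the orthogonal decomposition (\ref{eq:=00005BPRE=00005D_Hodge-Decomposition}) gives $\uac=\vector 0$. The IBC (the third equation in $\spaceDomain$) then reads $\pac=C(A\diff+B\uac\cdot\normal)=0$ in $H^{\frac{1}{2}}(\partial\Omega)$, and together with $\nabla\pac=\vector 0$ this yields $\pac=0$ a.e. in $\Omega$, so $\state=0$. The only point requiring care---and the reason this proof is shorter than its standard-case counterpart---is recognizing that (\ref{eq:=00005BEXTDIFF=00005D_Mu-Condition}) is exactly what rules out a nonzero solution of $A\diff=-B\uac\cdot\normal$, so that the case distinction on $0\in\sigma(A)$ needed in Lemma~\ref{lem:=00005BDIFF=00005D_Injectivity} is entirely avoided.
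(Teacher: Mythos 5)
Your proposal is correct and follows essentially the same route as the paper: both reduce to the implication $A\diff+B\,\uac\cdot\normal=0\Rightarrow(\diff,\uac\cdot\normal)=(0,0)$, which is property (\ref{eq:=00005BEXTDIFF=00005D_Definition-Injectivity}) of Lemma~\ref{lem:=00005BEXTDIFF=00005D_Admissible}(i), itself established via $R(A)\cap R(B)=\{0\}$ and the injectivity of $A$ and $B$ in Lemma~\ref{lem:=00005BEXTDIFF=00005D_Multiplication-Operator-2}. You merely unfold that lemma explicitly and reorder the conclusions ($\uac$ before $\pac$), which changes nothing of substance.
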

\begin{proof}
Assume $\state\in\spaceDomain$ satisfies $\opA\state=0$. In
particular $\nabla\pac=\vector 0$, $\opDiv\uac=0$,
and $A\diff+B\uac\cdot\normal=0$ in $L^{2}(\partial\Omega;\spaceDiff_{1})$.
The IBC (i.e. the third equation in $\spaceDomain$) gives $\pac=0$
in $L^{2}(\partial\Omega)$ hence $p=0$ in $L^{2}(\Omega)$. From
Lemma~\ref{lem:=00005BEXTDIFF=00005D_Boundary-Regularity-u}, $\uac\cdot\normal\in L^{2}(\partial\Omega)$
so we have at least $B\uac\cdot\normal\in L^{2}(\partial\Omega;\spaceDiff_{-1})$.
Using (\ref{eq:=00005BEXTDIFF=00005D_Definition-Injectivity}), we
deduce $\diff=0$ and $\uac\cdot\normal=0$, hence $\uac=0$ from
(\ref{eq:=00005BPRE=00005D_Hodge-Decomposition}).\end{proof}
\begin{lem}
\label{lem:=00005BEXTDIFF=00005D_Bijectivity}$s\opId-\opA$, with
$\opA$ given by (\ref{eq:=00005BEXTDIFF=00005D_Definition-A}), is
bijective for $s\in(0,\infty)\cup i\spaceR^{*}$.\end{lem}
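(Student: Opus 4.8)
The plan is to mirror the three-step proof of the standard diffusive case (Lemma~\ref{lem:=00005BDIFF=00005D_Bijectivity}), adapting it to the $C\&D$ structure of the extended realization. Fix $\srcstate=(\srcuac,\srcpac,\srcdiff)\in\spaceState$ and $s\in(0,\infty)\cup i\spaceR^{*}$; the goal is to produce a \emph{unique} $\state=(\uac,\pac,\diff)\in\spaceDomain$ solving $(s\opId-\opA)\state=\srcstate$, i.e.
\[
s\uac+\nabla\pac=\srcuac,\quad s\pac+\opDiv\uac=\srcpac,\quad s\diff-A\diff-B\uac\cdot\normal=\srcdiff.
\]

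First, as a preliminary step, I would \emph{assume} such a $\state\in\spaceDomain$ exists and extract necessary conditions. Since $\spaceD(A)=\spaceDiff_{0}$ and $B\uac\cdot\normal+\srcdiff\in\spaceDiff_{-1}$ (using $\uac\cdot\normal\in L^{2}(\partial\Omega)$ from Lemma~\ref{lem:=00005BEXTDIFF=00005D_Boundary-Regularity-u}), the third equation is solved uniquely as $\diff=R(s,A)(B\uac\cdot\normal+\srcdiff)\in L^{2}(\partial\Omega;\spaceDiff_{0})$. The central computation is then to rewrite the quantity entering the IBC: using $AR(s,A)+\opId=sR(s,A)$ one finds
\[
A\diff+B\uac\cdot\normal=sR(s,A)B\,\uac\cdot\normal+AR(s,A)\srcdiff,
\]
which, by the regularity property \eqref{eq:=00005BEXTDIFF=00005D_Definition-Regularity}, lies in $L^{2}(\partial\Omega;\spaceDiff_{1})$. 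Applying $C$ and invoking $\zfun(s)=s\,CR(s,A)B$ (see \eqref{eq:=00005BEXTDIFF=00005D_Laplace}) then turns the IBC $\pac=C(A\diff+B\uac\cdot\normal)$ into $\pac=\zfun(s)\,\uac\cdot\normal+CAR(s,A)\srcdiff$ in $L^{2}(\partial\Omega)$, where $CAR(s,A)\srcdiff\in L^{2}(\partial\Omega)$ since $CAR(s,A)\in\spaceBounded(\spaceDiff_{0},\spaceC)$. Combining $(\srcuac,\nabla\test)+s(\srcpac,\test)$ with this identity and Green's formula \eqref{eq:=00005BPRE=00005D_Green-Formula} (exactly as in the standard case) yields the weak formulation \eqref{eq:=00005BMOD=00005D_Weak-Form-p} for $\pac$, with the extra source term $\frac{s}{\zfun(s)}(CAR(s,A)\srcdiff,\test)_{L^{2}(\partial\Omega)}$; since $\Re(\zfun(s))\geq0$ and $\zfun(s)\in(0,\infty)$ for $s\in(0,\infty)$, Theorem~\ref{thm:=00005BMOD=00005D_Weak-Form-p} applies pointwise.

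Next, for the construction, I would take $\pac\in H^{1}(\Omega)$ to be the unique solution of that weak formulation, \emph{define} $\uac\in\nabla H^{1}(\Omega)$ by the first equation, recover $\opDiv\uac$ by testing against $\spaceContinuous_{0}^{\infty}(\Omega)$, and read off from the weak form (via Green's formula) that $\pac=\zfun(s)\,\uac\cdot\normal+CAR(s,A)\srcdiff$ holds. I would then \emph{define} $\diff=R(s,A)(B\uac\cdot\normal+\srcdiff)\in L^{2}(\partial\Omega;\spaceDiff_{0})$ and verify membership in $\spaceDomain$: the identity above shows $A\diff+B\uac\cdot\normal\in L^{2}(\partial\Omega;\spaceDiff_{1})$ and $C(A\diff+B\uac\cdot\normal)=\zfun(s)\,\uac\cdot\normal+CAR(s,A)\srcdiff=\pac$, so the abstract IBC holds, while the defining relation for $\diff$ is precisely the third equation. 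Uniqueness follows as before: $\pac$ from Theorem~\ref{thm:=00005BMOD=00005D_Weak-Form-p}, $\uac$ from the Hodge decomposition \eqref{eq:=00005BPRE=00005D_Hodge-Decomposition}, and $\diff$ from the bijectivity of $s\idMat-A$.

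The main obstacle, and the only genuine departure from the standard case, is that here the observation and feedthrough cannot be separated ($CB$ is undefined, the abstract counterpart of \eqref{eq:=00005BEXTDIFF=00005D_Infinite-Integral}), so one must never split $C(A\diff+B\uac\cdot\normal)$ into $CA\diff$ and $CB\uac\cdot\normal$. The resolution is exactly the cancellation $AR(s,A)B+B=sR(s,A)B$, which lifts the sum $A\diff+B\uac\cdot\normal$ from $\spaceDiff_{-1}$ into $\spaceDiff_{1}$, where $C$ is bounded; this is where the stronger regularity estimate \eqref{eq:=00005BEXTDIFF=00005D_Definition-Regularity} (rather than its weaker standard analogue) does the essential work.
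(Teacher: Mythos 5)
Your proposal is correct and follows essentially the same route as the paper's proof: the same necessary-condition/construction/uniqueness structure, the same resolvent identity $AR(s,A)+\opId=sR(s,A)$ to lift $A\diff+B\uac\cdot\normal$ into $L^{2}(\partial\Omega;\spaceDiff_{1})$ before applying $C$, the same weak formulation solved via Theorem~\ref{thm:=00005BMOD=00005D_Weak-Form-p}, and the same uniqueness argument. You also correctly pinpoint the one genuine difference from the standard diffusive case, namely that $CB$ is undefined and the sum must never be split.
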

\begin{proof}
Let $\srcstate\in\spaceState$, $s\in(0,\infty)\cup i\spaceR^{*}$,
and $\test\in H^{1}(\Omega)$. We seek a \emph{unique} $\state\in\spaceDomain$
such that $(s\opId-\opA)\state=\srcstate$, i.e. (\ref{eq:=00005BDIFF=00005D_Bijectivity-Eq}),
which implies
\begin{equation}
\begin{alignedat}{1}(\nabla\pac,\nabla\test)+s^{2}(\pac,\test)+\frac{s}{\zfun(s)}(\pac,\test)_{L^{2}(\partial\Omega)}= & (\srcuac,\nabla\test)+s(\srcpac,\test)\\
 & +\frac{s}{\zfun(s)}(CAR(s,A)\srcdiff,\test)_{L^{2}(\partial\Omega)}.
\end{alignedat}
\label{eq:=00005BEXTDIFF=00005D_Bijectivity-WeakForm-p}
\end{equation}
Note that, from (\ref{eq:=00005BEXTDIFF=00005D_Definition-Regularity}),
the right-hand side defines an anti-linear form on
$H^{1}(\Omega)$. Let us denote by $\pac$ the unique solution of
(\ref{eq:=00005BEXTDIFF=00005D_Bijectivity-WeakForm-p}) obtained
from a pointwise application of Theorem~\ref{thm:=00005BMOD=00005D_Weak-Form-p}
(we rely here on (\ref{eq:=00005BDIFF=00005D_Definition-Reality})).
It remains to find suitable $\uac$ and $\diff$, in a manner identical
to the standard diffusive case.

Taking $\test\in\spaceContinuous_{0}^{\infty}(\Omega)$ in (\ref{eq:=00005BEXTDIFF=00005D_Bijectivity-WeakForm-p})
shows that $\uac\in H_{\opDiv}(\Omega)$ with (\ref{eq:=00005BDIFF=00005D_Bijectivity-Eq}b).
Using the expressions of $\uac\in\nabla H^{1}(\Omega)$ and $\opDiv\uac$,
and Green's formula (\ref{eq:=00005BPRE=00005D_Green-Formula}), the
weak formulation~(\ref{eq:=00005BEXTDIFF=00005D_Bijectivity-WeakForm-p})
shows that $\pac$ and $\uac$ satisfy, in $L^{2}(\partial\Omega)$,
\begin{equation}
\pac=\zfun(s)\uac\cdot\normal+CAR(s,A)\srcdiff.\label{eq:=00005BEXTDIFF=00005D_Bijectivity-IBC}
\end{equation}

Let us now \emph{define} $\diff$ as
\[
\diff\coloneqq R(s,A)\left(B\uac\cdot\normal+\srcdiff\right)\in L^{2}(\partial\Omega;\spaceDiff_{0}).
\]
Using the property (\ref{eq:=00005BEXTDIFF=00005D_Definition-Regularity}),
we obtain that
\begin{alignat*}{1}
A\diff+B\uac\cdot\normal & =AR(s,A)\left(B\uac\cdot\normal+\srcdiff\right)+B\uac\cdot\normal\\
 & =sR(s,A)B\uac\cdot\normal+AR(s,A)\srcdiff
\end{alignat*}
belongs to $L^{2}(\partial\Omega;\spaceDiff_{1})$. We show that the
IBC holds by rewriting (\ref{eq:=00005BEXTDIFF=00005D_Bijectivity-IBC})
as
\begin{alignat*}{1}
\pac & =C(sR(s,A)B\uac\cdot\normal+AR(s,A)\srcdiff)\\
 & =C(AR(s,A)B\uac\cdot\normal+B\uac\cdot\normal+AR(s,A)\srcdiff)\\
 & =C(A\diff+B\uac\cdot\normal),
\end{alignat*}
using (\ref{eq:=00005BEXTDIFF=00005D_Laplace}). Thus $(\uac,\pac,\diff)\in\spaceDomain$.
The uniqueness of $\pac$ follows from Theorem~\ref{thm:=00005BMOD=00005D_Weak-Form-p},
that of $\uac$ from (\ref{eq:=00005BPRE=00005D_Hodge-Decomposition}),
and that of $\diff$ from $s\in\rho(A)$.
\end{proof}

\section{Addition of a derivative term\label{sec:=00005BDER=00005D_Addition-of-Derivative}}

By \emph{derivative impedance} we mean
\[
\hat{z}(s)=z_{1}s,\;z_{1}>0,
\]
for which the IBC (\ref{eq:=00005BMOD=00005D_IBC}) reduces to $\pac=z_{1}\partial_{t}\uac\cdot\normal.$

The purpose of this section is to illustrate, on two
examples, that the addition of such a derivative term to the IBCs
covered so far (\ref{eq:=00005BDELAY=00005D_Laplace},\ref{eq:=00005BDIFF=00005D_Laplace-Standard-Diffusive},\ref{eq:=00005BEXTDIFF=00005D_Laplace-Extended-Diffusive})
leaves unchanged the asymptotic stability results obtained with Corollary~\ref{cor:=00005BSTAB=00005D_Asymptotic-Stability}:
it only makes the proofs more cumbersome as the state space becomes
lengthier. This is why this term has not been included
in Sections~\ref{sec:=00005BDELAY=00005D_Delay-impedance}--\ref{sec:=00005BEXTDIFF=00005D_Extended-diffusive-impedance}.

The examples will also illustrate why establishing the
asymptotic stability of (\ref{eq:=00005BMOD=00005D_Wave-Equation},\ref{eq:=00005BMOD=00005D_IBC})
with (\ref{eq:=00005BMOD=00005D_Target-Impedance}) can be done by
treating each positive-real term in (\ref{eq:=00005BMOD=00005D_Target-Impedance})
separately (i.e. by building the realization of each of the four positive-real
term separately and then aggregating them), thus justifying a posteriori
the structure of the article.

\begin{example}[Proportional-derivative impedance]
Consider the following positive-real impedance kernel
\begin{equation}
\hat{z}(s)=z_{0}+z_{1}s,\label{eq:=00005BDER=00005D_Prop-Der-Impedance}
\end{equation}
where $z_{0},z_{1}>0$. The energy space is
\[
\begin{gathered}\spaceState\coloneqq\nabla H^{1}(\Omega)\times L^{2}(\Omega)\times L^{2}(\partial\Omega),\\
\begin{alignedat}{1}((\uac,\pac,\deriv),(\srcuac,\srcpac,\srcderiv))_{\spaceState}\coloneqq(\uac,\srcuac)+(\pac,\srcpac)+z_{1}(\deriv,\srcderiv)_{L^{2}(\partial\Omega)}\end{alignedat}
\end{gathered}
\]
and the corresponding evolution operator is
\[
\begin{gathered}\spaceDomain\ni\state\coloneqq\left(\begin{array}{c}
\uac\\
\pac\\
\deriv
\end{array}\right)\longmapsto\opA\state\coloneqq\left(\begin{array}{c}
-\nabla\pac\\
-\opDiv\uac\\
\frac{1}{z_{1}}\left[\pac-z_{0}\uac\cdot\normal\right]
\end{array}\right),\\
\spaceDomain\coloneqq\left\{ (\uac,\pac,\deriv)\in\spaceState\;\left|\;\begin{alignedat}{1} & (\uac,\pac)\in H_{\opDiv}(\Omega)\times H^{1}(\Omega)\\
 & \deriv=\uac\cdot\normal\;\text{in }L^{2}(\partial\Omega)
\end{alignedat}
\right.\right\} .
\end{gathered}
\]
Note how the derivative term in (\ref{eq:=00005BDER=00005D_Prop-Der-Impedance})
is accounted for by adding the state variable $\eta\in L^{2}\left(\partial\Omega\right)$.
The application of Corollary~\ref{cor:=00005BSTAB=00005D_Asymptotic-Stability}
is straightforward. For instance, for $\state\in\spaceDomain$, we
have
\begin{alignat*}{1}
\Re(\opA\state,\state)_{\spaceState}= & -\Re\left[(\uac\cdot\normal,\pac)_{L^{2}(\partial\Omega)}\right]+\Re\left[(\pac-z_{0}\uac\cdot\normal,\deriv)_{L^{2}(\partial\Omega)}\right]\\
= & -z_{0}\Vert\uac\cdot\normal\Vert_{L^{2}(\partial\Omega)}^{2},
\end{alignat*}
so that $\opA$ is dissipative. The injectivity of $\opA$ and the
bijectivity of $s\opId-\opA$ for $s\in(0,\infty)\cup i\spaceR^{*}$
can be proven similarly to what has been done in the previous sections.
\end{example}

\begin{example}
Let us revisit the delay impedance (\ref{eq:=00005BDELAY=00005D_Laplace}),
covered in Section~\ref{sec:=00005BDELAY=00005D_Delay-impedance},
by adding a derivative term to it:
\begin{equation}
\hat{z}(s)\coloneqq z_{1}s+z_{0}+z_{\tau}e^{-\tau s},\label{eq:=00005BDER=00005D_Impedance-derivative}
\end{equation}
where $z_{1}>0$ and $(z_{0},z_{\tau})$ are defined as in Section~\ref{sec:=00005BDELAY=00005D_Delay-impedance},
so that $\hat{z}$ is positive-real. The inclusion of the derivative
implies the presence of an additional variable in the extended state,
i.e. the state space is (compare with (\ref{eq:=00005BDELAY=00005D_Scalar-Prod}))
\[
\begin{gathered}\spaceState\coloneqq\nabla H^{1}(\Omega)\times L^{2}(\Omega)\times L^{2}(\partial\Omega;L^{2}(-\tau,0))\times L^{2}(\partial\Omega),\\
\begin{alignedat}{1}((\uac,\pac,\delay,\deriv),(\srcuac,\srcpac,\srcdelay,\srcderiv))_{\spaceState}\coloneqq(\uac,\srcuac)+(\pac,\srcpac)+k(\delay,\srcdelay & )_{L^{2}(\partial\Omega;L^{2}(-\tau,0))}\\
 & +z_{1}(\deriv,\srcderiv)_{L^{2}(\partial\Omega)}.
\end{alignedat}
\\
\\
\end{gathered}
\]
The operator $\opA$ becomes (compare with (\ref{eq:=00005BDELAY=00005D_Definition-A}))
\[
\begin{gathered}\spaceDomain\ni\state\coloneqq\left(\begin{array}{c}
\uac\\
\pac\\
\delay\\
\deriv
\end{array}\right)\longmapsto\opA\state\coloneqq\left(\begin{array}{c}
-\nabla\pac\\
-\opDiv\uac\\
\partial_{\theta}\delay\\
\frac{1}{z_{1}}\left[\pac-z_{0}\uac\cdot\normal-z_{\tau}\delay(\cdot,-\tau)\right]
\end{array}\right),\\
\spaceDomain\coloneqq\left\{ (\uac,\pac,\delay,\deriv)\in\spaceState\;\left|\;\begin{alignedat}{1} & (\uac,\pac,\delay)\in H_{\opDiv}(\Omega)\times H^{1}(\Omega)\times L^{2}(\partial\Omega;H^{1}(-\tau,0))\\
 & \delay(\cdot,0)=\uac\cdot\normal\;\text{in }L^{2}(\partial\Omega)\\
 & \deriv=\uac\cdot\normal\;\text{in }L^{2}(\partial\Omega)
\end{alignedat}
\right.\right\} ,
\end{gathered}
\]
where the IBC (\ref{eq:=00005BMOD=00005D_IBC},\ref{eq:=00005BDER=00005D_Impedance-derivative})
is the third equation in $\spaceDomain$. The application of Corollary~\ref{cor:=00005BSTAB=00005D_Asymptotic-Stability}
is identical to Section~\ref{sub:=00005BDELAY=00005D_Asymptotic-stability}.
For instance, for $\state\in\spaceDomain$, we have
\begin{alignat*}{1}
\Re(\opA\state,\state)_{\spaceState}= & -\Re\left[(\uac\cdot\normal,\pac)_{L^{2}(\partial\Omega)}\right]+\Re\left[k(\partial_{\theta}\delay,\delay)_{L^{2}(\partial\Omega;L^{2}(-\tau,0))}\right]\\
 & +\Re\left[(\pac-z_{0}\uac\cdot\normal-z_{\tau}\delay(\cdot,-\tau),\deriv)_{L^{2}(\partial\Omega)}\right]\\
= & -\Re\left[(\uac\cdot\normal,\pac)_{L^{2}(\partial\Omega)}\right]+\frac{k}{2}\Re\left[\Vert\uac\cdot\normal\Vert_{L^{2}(\partial\Omega)}^{2}-\Vert\delay(\cdot,-\tau)\Vert_{L^{2}(\partial\Omega)}^{2}\right]\\
 & +\Re\left[(\pac-z_{0}\uac\cdot\normal-z_{\tau}\delay(\cdot,-\tau),\uac\cdot\normal)_{L^{2}(\partial\Omega)}\right]\\
= & \left(\frac{k}{2}-z_{0}\right)\Vert\uac\cdot\normal\Vert_{L^{2}(\partial\Omega)}^{2}-\frac{k}{2}\Vert\delay(\cdot,-\tau)\Vert_{L^{2}(\partial\Omega)}^{2}\\
 & -z_{\tau}\Re\left[(\delay(\cdot,-\tau),\uac\cdot\normal)_{L^{2}(\partial\Omega)}\right],
\end{alignat*}
so that the expression of $\Re(\opA\state,\state)_{\spaceState}$
is identical to that without a derivative term, see the proof of Lemma~\ref{lem:=00005BDELAY=00005D_Dissipativity}.
The proof of the injectivity of $\opA$ is also identical to that
carried out in Lemma~\ref{lem:=00005BDELAY=00005D_Injectivity}:
the condition $\opA\state=0$ yields $\delay(\cdot,0)=\delay(\cdot,-\tau)=\uac\cdot\normal=\deriv$
a.e. on $\partial\Omega$. Finally, the proof of Lemma~\ref{lem:=00005BDELAY=00005D_Bijectivity}
can also be followed almost identically to solve $(s\opId-\opA)\state=\srcstate$
with $\srcstate=(\srcuac,\srcpac,\srcdelay,\srcderiv)$, the additional
steps being straightforward; after defining uniquely $\pac$, $\uac$,
and $\delay$, the only possibility for $\deriv$ is $\deriv\coloneqq\uac\cdot\normal$,
which belongs to $L^{2}(\partial\Omega)$, and $\deriv=\delay(\cdot,0)$
is deduced from (\ref{eq:=00005BDELAY=00005D_Bijectivity-Delay}).
\end{example}

\section{Conclusions and perspectives}

This paper has focused on the asymptotic stability of the wave equation
coupled with positive-real IBCs drawn from physical applications,
namely time-delayed impedance in Section~\ref{sec:=00005BDELAY=00005D_Delay-impedance},
standard diffusive impedance (e.g. fractional integral) in Section~\ref{sec:=00005BDIFF=00005D_Standard-diffusive-impedance},
and extended diffusive impedance (e.g. fractional derivative) in Section~\ref{sec:=00005BEXTDIFF=00005D_Extended-diffusive-impedance}.
Finally, the invariance of the derived asymptotic stability results
under the addition of a derivative term in the impedance has been
discussed in Section~\ref{sec:=00005BDER=00005D_Addition-of-Derivative}.
The proofs crucially hinge upon the knowledge of a dissipative realization
of the IBC, since it employs the semigroup asymptotic stability result
given in \cite{arendt1988tauberian,lyubich1988asymptotic}.

By combining these results, asymptotic stability is obtained for the
impedance $\hat{z}$ introduced in Section~\ref{sec:=00005BMOD=00005D_Model-and-preliminary-results}
and given by (\ref{eq:=00005BMOD=00005D_Target-Impedance}). This
suggests the first perspective of this work, formulated as a conjecture.
\begin{conjecture}
Assume $\hat{z}$ is positive-real, without isolated singularities
on $i\spaceR$. Then the Cauchy problem (\ref{eq:=00005BMOD=00005D_Wave-Equation},\ref{eq:=00005BMOD=00005D_IBC})
is asymptotically stable in a suitable energy space.
\end{conjecture}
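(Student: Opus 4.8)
The plan is to retain the architecture of Sections~\ref{sec:=00005BDELAY=00005D_Delay-impedance}--\ref{sec:=00005BDER=00005D_Addition-of-Derivative}, replacing the four explicit realizations by a single passive one valid for an arbitrary positive-real $\hat{z}$, and then to invoke Corollary~\ref{cor:=00005BSTAB=00005D_Asymptotic-Stability}. Every positive-real $\hat{z}$ is the transfer function of a passive state-space system $(A,B,C)$, of which the triplets of Sections~\ref{sub:=00005BDIFF=00005D_Abstract-realization} and \ref{sub:=00005BEXTDIFF=00005D_Abstract-realization} are prototypes; a canonical construction proceeds from the integral representation of positive-real functions invoked in Remark~\ref{rem:=00005BMOD=00005D_PR-growth-at-infinity} (see \cite[Eq.~(3.21)]{beltrami2014distributions}). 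In that representation the absolutely continuous density on $i\spaceR$, equal to $\Re[\hat{z}(i\omega)]$, measures the frequency-dependent boundary dissipation and is reproduced by a diffusive-type component (with $A$ a multiplication operator whose spectrum lies in the open left half-plane $\{\Re(s)<0\}$), while entire exponential factors such as $e^{-\tau s}$ call for the conservative transport component of Section~\ref{sec:=00005BDELAY=00005D_Delay-impedance}. Assembling these pieces — as the modularity illustrated in Section~\ref{sec:=00005BDER=00005D_Addition-of-Derivative} shows is legitimate, and which for the specific kernel (\ref{eq:=00005BMOD=00005D_Target-Impedance}) has already been carried out — yields an extended operator $\opA$ on a state space $\spaceState=\nabla H^{1}(\Omega)\times L^{2}(\Omega)\times L^{2}(\partial\Omega;\spaceDiff_{0})$, for an appropriate energy space $\spaceDiff_{0}$, with domain $\spaceDomain$ patterned on (\ref{eq:=00005BDIFF=00005D_Definition-A}).

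Granting such a realization, conditions~(i) and~(iii) (for $s\in(0,\infty)$) of Corollary~\ref{cor:=00005BSTAB=00005D_Asymptotic-Stability} follow along the established lines. Dissipativity is immediate: by Green's formula the sign of $\Re(\opA\state,\state)_{\spaceState}$ is governed by the abstract passivity inequality (\ref{eq:=00005BDIFF=00005D_Definition-Dissipativity}), which is equivalent to $\hat{z}$ being positive-real, so $\opA$ is dissipative. Surjectivity of $s\opId-\opA$ on the positive real axis reduces, after eliminating $\diff$ and $\uac$, to the weak formulation (\ref{eq:=00005BMOD=00005D_Weak-Form-p}) with $z=\hat{z}(s)$; since $\hat{z}(s)\in(0,\infty)$ there, Theorem~\ref{thm:=00005BMOD=00005D_Weak-Form-p} applies verbatim.

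The crux is the imaginary axis $s=i\omega$, $\omega\neq0$, where condition~(iii) must be verified and where injectivity (condition~(ii)) also sits as the limit $\omega\to0$. Eliminating $\diff$ and $\uac$ again produces the weak form (\ref{eq:=00005BMOD=00005D_Weak-Form-p}) with $z=\hat{z}(i\omega)$, but the hypothesis of Theorem~\ref{thm:=00005BMOD=00005D_Weak-Form-p} now demands $\Re[\hat{z}(i\omega)]>0$: this is precisely what, in the eigenvalue analysis of Lemma~\ref{lem:=00005BMOD=00005D_Weak-Form-p_K-eigenvalue}, forces a would-be kernel element into $H_{0}^{1}(\Omega)$ with $\Vert\uac\cdot\normal\Vert_{L^{2}(\partial\Omega)}=0$, after which Proposition~\ref{prop:=00005BMOD=00005D_Rellich-Lemma} yields $\pac=0$. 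Where $\Re[\hat{z}(i\omega)]>0$ the argument therefore closes cleanly, and it is exactly the continuous spectrum of $A$ on $i\spaceR$ that supplies this strict positivity.

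I expect the genuine obstacle, and the reason the statement remains a conjecture, to lie at the frequencies where $\Re[\hat{z}(i\omega)]=0$, i.e. where the boundary is instantaneously lossless: there Theorem~\ref{thm:=00005BMOD=00005D_Weak-Form-p} no longer applies and a nontrivial standing mode must be excluded directly from the realization, as was done case by case in Lemmas~\ref{lem:=00005BDIFF=00005D_Injectivity} and \ref{lem:=00005BEXTDIFF=00005D_Injectivity}. The hypothesis of no isolated singularities on $i\spaceR$ disposes of the worst case — a pole of $\hat{z}$ at $i\omega_{0}$, which for a minimal realization places an eigenvalue of $A$ on $i\spaceR^{*}$ and produces an undamped boundary oscillator manifestly violating (\ref{eq:=00005BSTAB=00005D_T-asymptotic-stability}) — but it does not by itself control a lossless set of positive measure. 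Each of the four proofs above succeeds because the location of $\sigma(A)$ is known and fixed (the negative real axis for the diffusive kernels, a bounded interval for the delay), so that one term of the energy balance is known in advance to dominate and the regularity estimates (\ref{eq:=00005BDIFF=00005D_Definition-Regularity}) stay uniform as $s\to i\omega$. A single argument covering arbitrary $\hat{z}$ would seem to require a representation-independent, purely operator-theoretic version of the Rellich--Fredholm step, coupling the interior unique-continuation content of Proposition~\ref{prop:=00005BMOD=00005D_Rellich-Lemma} to a uniform bound on the passive realization along $i\spaceR^{*}$; obtaining that uniform bound is the hard part.
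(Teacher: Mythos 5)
This statement is a \emph{conjecture}: the paper gives no proof of it, only a discussion in the Conclusions of why the method of Sections~\ref{sec:=00005BDELAY=00005D_Delay-impedance}--\ref{sec:=00005BDER=00005D_Addition-of-Derivative} does not extend, together with two explicit positive-real kernels it cannot handle. Your text is accordingly not a proof either, and you say so honestly; your outline of the intended architecture (general passive realization, then Corollary~\ref{cor:=00005BSTAB=00005D_Asymptotic-Stability}) matches the paper's. But your diagnosis of \emph{where} the argument breaks is placed one step too late, and one of your intermediate claims is the precise point the paper identifies as false in general. You assert that the pointwise passivity inequality (\ref{eq:=00005BDIFF=00005D_Definition-Dissipativity}) ``is equivalent to $\hat{z}$ being positive-real.'' It is not: positive-realness of the aggregate transfer function does not imply that a given realization is passive. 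The paper's first counterexample --- a diffusive weight $\mu$ of indefinite sign whose kernel is nonetheless positive-real --- defeats exactly this step: the energy balance (\ref{eq:=00005BDIFF=00005D_Application-Passivity}) fails even though $\hat{z}$ is PR, so no candidate norm $\Vert\cdot\Vert_{\spaceState}$ makes $\opA$ dissipative and condition (i) of Corollary~\ref{cor:=00005BSTAB=00005D_Asymptotic-Stability} cannot even be formulated. The second counterexample (a delayed diffusive kernel) shows the same failure for a composite realization whose pieces are individually passive but whose coupling admits no known Lyapunov functional.

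So the primary obstruction, in the paper's own account, is the existence of a dissipative realization with the full list of properties (i)--(iv) of Lemma~\ref{lem:=00005BDIFF=00005D_Admissible} or Lemma~\ref{lem:=00005BEXTDIFF=00005D_Admissible} (stability, regularity, reality, passivity): for rational proper $\hat{z}$ the Kalman--Yakubovich--Popov lemma supplies it, but for non-rational $\hat{z}$ the only general positive-real realization result cited (Staffans' system nodes) does not visibly deliver these properties, and the ``canonical construction from the integral representation'' you invoke is exactly what fails in the indefinite-sign example. Your emphasis on the frequencies where $\Re[\hat{z}(i\omega)]=0$, and on uniform resolvent bounds along $i\spaceR^{*}$, describes a genuine secondary difficulty (Theorem~\ref{thm:=00005BMOD=00005D_Weak-Form-p} requires $z(s)\in\spaceC_{0}^{+}$, and the injectivity arguments are indeed case-by-case), and your reading of the ``no isolated singularities on $i\spaceR$'' hypothesis as excluding undamped boundary oscillators is consistent with the paper. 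But a referee would reject the claim of equivalence between positive-realness and realization passivity, and with it the assertion that dissipativity ``is immediate''; that is the gap that keeps the statement a conjecture.
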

Establishing this conjecture using the method of proof
used in this paper first requires building a dissipative realization
of the impedance operator $u\mapsto z\star u$.

If $\hat{z}$ is assumed rational and proper (i.e. $\hat{z}(\infty)$
is finite), a dissipative realization can be obtained using the celebrated
positive-real lemma, also known as the Kalman\textendash Yakubovich\textendash Popov
lemma \cite[Thm.~3]{anderson1967prmatrices}; the proof of asymptotic
stability is then a simpler version of that carried out in Section~\ref{sec:=00005BDIFF=00005D_Standard-diffusive-impedance},
see \cite[\S\,4.3]{monteghetti2018dissertation} for the details.
If $\hat{z}$ is not proper, it can be written as $\hat{z}=a_{1}s+\hat{z}_{\text{p}}$
where $a_{1}>0$ and $\hat{z}_{\text{p}}$ is proper (see Remark~\ref{rem:=00005BMOD=00005D_PR-growth-at-infinity});
each term can be covered separately, see Section~\ref{sec:=00005BDER=00005D_Addition-of-Derivative}.

If $\hat{z}$ is not rational, then a suitable infinite-dimensional
variant of the positive-real lemma is required. For
instance, \cite[Thm.~5.3]{staffans2002passive} gives a realization
using system nodes; a difficulty in using this result is that the
properties needed for the method of proof presented here do not seem
to be naturally obtained with system nodes. This result would be
sharp, in the sense that it is known that exponential stability is
not achieved in general (consider for instance $\hat{z}(s)=\nicefrac{1}{\sqrt{s}}$
that induces an essential spectrum with accumulation point at $0$).
If this conjecture proves true, then the rate of decay of the solution
could also be studied and linked to properties of the impedance $\hat{z}$;
this could be done by adapting the techniques used in
\cite{stahn2018waveequation}.

To illustrate this conjecture, let us give two examples of positive-real
impedance kernels that are \emph{not} covered by the results of this
paper. Both examples arise in physical applications \cite{monteghetti2016diffusive}
and have been used in numerical simulations \cite{monteghetti2017tdibc}.
The first example is a kernel similar to (\ref{eq:=00005BMOD=00005D_Target-Impedance}),
namely
\[
\hat{z}(s)=z_{0}+z_{\tau}e^{-\tau s}+z_{1}s+\int_{0}^{\infty}\frac{\mu(\xi)}{s+\xi}\,\dinf\xi\quad\left(\Re(s)>0\right),
\]
where $\tau>0$, $z_{\tau}\in\spaceR$, $z_{0}\geq\vert z_{\tau}\vert$,
$z_{1}>0$, and the weight $\mu\in\spaceContinuous^{\infty}((0,\infty))$
satisfies the condition $\int_{0}^{\infty}\frac{\vert\mu(\xi)\vert}{1+\xi}\dinf\xi<\infty$
and is such that $\hat{z}$ is positive-real. When the sign of $\mu$
is indefinite the passivity condition (\ref{eq:=00005BDIFF=00005D_Application-Passivity})
does not hold, so that this impedance is not covered by the presented
results despite the fact that, overall, $\hat{z}$ is positive-real
with a realization formally identical to that of the impedance (\ref{eq:=00005BMOD=00005D_Target-Impedance})
defined in Section~\ref{sec:=00005BMOD=00005D_Model-and-preliminary-results}.

The second and last example is
\[
\hat{z}(s)=z_{0}+z_{\tau}\frac{e^{-\tau s}}{\sqrt{s}},
\]
with $z_{\tau}\geq0$, $\tau>0$, and $z_{0}\geq0$ sufficiently large
for $\hat{z}$ to be positive-real (the precise condition is $z_{0}\geq-z_{\tau}\cos(\tilde{x}+\frac{\pi}{4})\sqrt{\nicefrac{\tau}{\tilde{x}}}$
where $\tilde{x}\simeq2.13$ is the smallest positive root of $x\mapsto\tan(x+\nicefrac{\pi}{4})+\nicefrac{1}{2x}$).
A simple realization can be obtained by combining Sections~\ref{sec:=00005BDELAY=00005D_Delay-impedance}
and \ref{sec:=00005BDIFF=00005D_Standard-diffusive-impedance}, i.e.
by delaying the diffusive representation using a transport equation:
the convolution then reads, for a causal input $u$,
\[
z\star u=z_{0}u+z_{\tau}\int_{0}^{\infty}\delay(t,-\tau,\xi)\,\dinf\mu(\xi),
\]
where $\diff$ and $\mu$ are defined as in Section~\ref{sec:=00005BDIFF=00005D_Standard-diffusive-impedance},
and for a.e. $\xi\in(0,\infty)$ the function $\delay(\cdot,\cdot,\xi)$
obeys the transport equation (\ref{eq:=00005BDELAY=00005D_Transport-Equation}ab)
but with $\delay(t,0,\xi)=\diff(t,\xi)$. So far, the authors have
not been able to find a suitable Lyapunov functional (i.e. a suitable
definition of $\Vert\cdot\Vert_{\spaceState}$) for this realization.

The second open problem we wish to point out is the extension of the
stability result to discontinuous IBCs. A typical case is a split
of the boundary $\partial\Omega$ into three disjoint parts: a Neumann
part $\partial\Omega_{N}$, a Dirichlet part $\partial\Omega_{D}$,
and an impedance part $\partial\Omega_{z}$ where one of the IBCs covered
in the paper is applied. Dealing with such discontinuities may involve
the redefinition of both the energy space $\spaceState$ and domain
$\spaceDomain$, as well as the derivation of compatibility constraints.
The proofs may benefit from considering the scattering formulation,
recalled in Remark~\ref{rem:=00005BMOD=00005D_Terminology}, which
enables to write the three boundary conditions in a unified fashion.

\section*{Acknowledgments}

This research has been financially supported by the French ministry
of defense (Direction G\'{e}n\'{e}rale de l'Armement) and ONERA
(the French Aerospace Lab). We thank the two referees for their helpful
comments. The authors are grateful to Prof. Patrick
Ciarlet for suggesting the use of the extension by zero in the proof
of Proposition~\ref{prop:=00005BMOD=00005D_Rellich-Lemma}.

\appendix

\section{Miscellaneous results}

For the sake of completeness, the key technical results upon which
the paper depends are briefly gathered here.

\subsection{Extension by zero\label{sub:=00005BMISC=00005D_Extension-by-zero}}

Let us define the zero extension operator as
\[
E:\,L^{2}(\Omega_{1})\rightarrow L^{2}(\Omega_{2}),\;Eu\coloneqq\left\{ u\;\text{on}\;\Omega_{1},\;0\;\text{on}\;\Omega_{2}\backslash\Omega_{1},\right.
\]
where $\Omega_{1}$ and $\Omega_{2}$ are two open subsets of $\spaceR^{d}$
such that $\overline{\Omega_{1}}\subset\Omega_{2}$.
\begin{prop}
\label{prop:Extension-by-zero}Let $\Omega_{1}$ and $\Omega_{2}$
be two bounded open subsets of $\spaceR^{d}$ such that $\overline{\Omega_{1}}\subset\Omega_{2}$.
For any $\pac\in H_{0}^{1}\left(\Omega_{1}\right)$, $E\pac\in H_{0}^{1}\left(\Omega_{2}\right)$
with
\begin{equation}
\forall i\in\left\llbracket 1,d\right\rrbracket ,\;\partial_{i}\left[E\pac\right]=E\left[\partial_{i}\pac\right]\;\text{a.e. in }\;\Omega_{2}.\label{eq:Extension-by-zero_Derivatives}
\end{equation}
In particular, $\Vert\pac\Vert_{H^{1}\left(\Omega_{1}\right)}=\Vert E\pac\Vert_{H^{1}\left(\Omega_{2}\right)}$.\end{prop}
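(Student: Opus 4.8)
The plan is to reduce everything to the case of smooth, compactly supported functions by density, exploiting the definition of $H_{0}^{1}(\Omega_{1})$ as the closure of $\spaceContinuous_{0}^{\infty}(\Omega_{1})$ in the $H^{1}(\Omega_{1})$ norm. First I would observe that for any $\diff\in\spaceContinuous_{0}^{\infty}(\Omega_{1})$ the zero extension $E\diff$ belongs to $\spaceContinuous_{0}^{\infty}(\Omega_{2})$: this uses the strict inclusion $\overline{\Omega_{1}}\subset\Omega_{2}$, which guarantees that the support of $\diff$, a compact subset of $\Omega_{1}$, stays at a positive distance from $\partial\Omega_{2}$. For such smooth functions the commutation relation (\ref{eq:Extension-by-zero_Derivatives}) is immediate, since both $\partial_{i}[E\diff]$ and $E[\partial_{i}\diff]$ are the zero extension of the classical derivative $\partial_{i}\diff$. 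Consequently $E$ is a linear isometry from $(\spaceContinuous_{0}^{\infty}(\Omega_{1}),\Vert\cdot\Vert_{H^{1}(\Omega_{1})})$ into $H_{0}^{1}(\Omega_{2})$, as the $L^{2}$-isometry property together with the commutation relation gives
\[
\Vert E\diff\Vert_{H^{1}(\Omega_{2})}^{2}=\Vert\diff\Vert_{L^{2}(\Omega_{1})}^{2}+\sum_{i=1}^{d}\Vert\partial_{i}\diff\Vert_{L^{2}(\Omega_{1})}^{2}=\Vert\diff\Vert_{H^{1}(\Omega_{1})}^{2}.
\]

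Next I would pass to a general $\pac\in H_{0}^{1}(\Omega_{1})$ by approximation. Choosing $\diff_{n}\in\spaceContinuous_{0}^{\infty}(\Omega_{1})$ with $\diff_{n}\to\pac$ in $H^{1}(\Omega_{1})$, the isometry just established shows that $(E\diff_{n})_{n}$ is Cauchy in $H^{1}(\Omega_{2})$; since each $E\diff_{n}$ lies in the closed subspace $H_{0}^{1}(\Omega_{2})$, the limit $g\coloneqq\lim_{n}E\diff_{n}$ exists and belongs to $H_{0}^{1}(\Omega_{2})$. It then remains to identify $g$ with $E\pac$, which follows from uniqueness of limits in $L^{2}(\Omega_{2})$: on the one hand $E\diff_{n}\to g$ in $H^{1}(\Omega_{2})$, hence in $L^{2}(\Omega_{2})$; on the other hand $E$ is trivially an isometry of $L^{2}(\Omega_{1})$ into $L^{2}(\Omega_{2})$, so $E\diff_{n}\to E\pac$ in $L^{2}(\Omega_{2})$. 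Therefore $g=E\pac$, proving $E\pac\in H_{0}^{1}(\Omega_{2})$.

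The commutation relation (\ref{eq:Extension-by-zero_Derivatives}) is then obtained by passing to the limit in $\partial_{i}[E\diff_{n}]=E[\partial_{i}\diff_{n}]$: the left-hand side converges to $\partial_{i}[E\pac]$ in $L^{2}(\Omega_{2})$ because $E\diff_{n}\to E\pac$ in $H^{1}(\Omega_{2})$, while the right-hand side converges to $E[\partial_{i}\pac]$ because $\partial_{i}\diff_{n}\to\partial_{i}\pac$ in $L^{2}(\Omega_{1})$ and $E$ is an $L^{2}$-isometry. The final norm identity $\Vert\pac\Vert_{H^{1}(\Omega_{1})}=\Vert E\pac\Vert_{H^{1}(\Omega_{2})}$ is then immediate from (\ref{eq:Extension-by-zero_Derivatives}) together with the $L^{2}$-isometry property, or equivalently by passing to the limit in the displayed smooth isometry above.

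I do not anticipate a genuine obstacle here, as this is a standard density argument; the only two points demanding care are the verification that $E\diff\in\spaceContinuous_{0}^{\infty}(\Omega_{2})$, which is exactly where the hypothesis $\overline{\Omega_{1}}\subset\Omega_{2}$ (rather than merely $\Omega_{1}\subset\Omega_{2}$) is used, and the clean identification of the $H^{1}(\Omega_{2})$-limit $g$ with the pointwise zero extension $E\pac$ via uniqueness of $L^{2}$-limits. An alternative to the second step would be to verify (\ref{eq:Extension-by-zero_Derivatives}) directly by testing against $\test\in\spaceContinuous_{0}^{\infty}(\Omega_{2})$ and using that, for $\pac\in H_{0}^{1}(\Omega_{1})$, the boundary-free integration-by-parts identity $\int_{\Omega_{1}}\pac\,\partial_{i}\test=-\int_{\Omega_{1}}(\partial_{i}\pac)\test$ holds for every $\test\in H^{1}(\Omega_{1})$; however, that route yields only $E\pac\in H^{1}(\Omega_{2})$ and would still require the density argument to conclude membership in $H_{0}^{1}(\Omega_{2})$, so I would favor the approximation approach throughout.
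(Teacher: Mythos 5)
Your proof is correct, and it rests on the same two pillars as the paper's: the density of $\spaceContinuous_{0}^{\infty}(\Omega_{1})$ in $H_{0}^{1}(\Omega_{1})$ and the fact that $E$ is an $H^{1}$-isometry on smooth compactly supported functions. The organization differs, though, in essentially the way you anticipated in your closing remark. The paper first computes the distributional derivative of $E\pac$ directly: it tests against $\varphi\in\spaceContinuous_{0}^{\infty}(\Omega_{2})$, inserts the approximating sequence $\phi_{n}$ to justify the boundary-free integration by parts $-\int_{\Omega_{1}}\pac\,\partial_{i}\varphi=\int_{\Omega_{1}}(\partial_{i}\pac)\varphi$, and thereby obtains $\partial_{i}[E\pac]=E[\partial_{i}\pac]$ in $\spaceD^{'}(\Omega_{2})$ and hence $E\pac\in H^{1}(\Omega_{2})$ in one stroke; only then does it invoke $E\phi_{n}\rightarrow E\pac$ in $H^{1}(\Omega_{2})$ to conclude membership in $H_{0}^{1}(\Omega_{2})$. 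You instead run the approximation all the way through, extracting the $H^{1}(\Omega_{2})$-limit $g$ of $(E\diff_{n})$ from completeness, identifying $g=E\pac$ by uniqueness of $L^{2}$-limits, and reading off the commutation relation by passing to the limit in $\partial_{i}[E\diff_{n}]=E[\partial_{i}\diff_{n}]$. Both routes are equally rigorous and of comparable length; the paper's version gives the weak-derivative identity without ever needing to identify an abstract limit, while yours avoids the explicit distributional computation. One small inaccuracy in your commentary: the verification that $E\diff\in\spaceContinuous_{0}^{\infty}(\Omega_{2})$ for $\diff\in\spaceContinuous_{0}^{\infty}(\Omega_{1})$ only uses $\Omega_{1}\subset\Omega_{2}$, since $\operatorname{supp}\diff$ is already a compact subset of the open set $\Omega_{1}$; the stronger hypothesis $\overline{\Omega_{1}}\subset\Omega_{2}$ is not actually exercised there (nor, in fact, anywhere else in either proof).
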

\begin{rem*}
Note that we do not require any regularity on the boundary of $\Omega_{i}$.
This is due to the fact that the proof only relies on the definition
of $H_{0}^{1}$ by density.\end{rem*}
\begin{proof}
The first part of the proof is adapted from \cite[Lem.\,3.22]{adams1975Sobolev}.
By definition of $H_{0}^{1}\left(\Omega_{1}\right)$, there is a sequence
$\phi_{n}\in\spaceContinuous_{0}^{\infty}\left(\Omega_{1}\right)$
converging to $\pac$ in the $\Vert\cdot\Vert_{H^{1}\left(\Omega_{1}\right)}$
norm. Since $E\pac\in L^{2}\left(\Omega_{2}\right)$, $E\pac$ is
locally integrable and thus belongs to $\spaceD^{'}\left(\Omega_{2}\right)$.
For any $\varphi\in\spaceContinuous_{0}^{\infty}\left(\Omega_{2}\right)$,
we have \begingroup \allowdisplaybreaks
\begin{alignat*}{2}
\left\langle \partial_{i}\left[E\pac\right],\varphi\right\rangle _{\spaceD^{'}\left(\Omega_{2}\right),\spaceContinuous_{0}^{\infty}\left(\Omega_{2}\right)} & \coloneqq-\left\langle E\pac,\partial_{i}\varphi\right\rangle _{\spaceD^{'}\left(\Omega_{2}\right),\spaceContinuous_{0}^{\infty}\left(\Omega_{2}\right)}\\
 & =-\int_{\Omega_{1}}\pac\partial_{i}\varphi & \qquad\ensuremath{\left(E\pac\in L^{2}\left(\Omega_{2}\right)\right)}\\
 & =-\lim_{n\rightarrow\infty}\int_{\Omega_{1}}\phi_{n}\partial_{i}\varphi & \qquad\ensuremath{\left(\pac\in H_{0}^{1}\left(\Omega_{1}\right)\right)}\\
 & =\lim_{n\rightarrow\infty}\int_{\Omega_{1}}\partial_{i}\phi_{n}\varphi & \qquad\ensuremath{\left(\phi_{n}\in\spaceContinuous_{0}^{\infty}\left(\Omega_{1}\right)\right)}\\
 & =\int_{\Omega_{1}}\partial_{i}\pac\varphi & \qquad\left(\partial_{i}\phi_{n}\xrightarrow[n\rightarrow\infty]{L^{2}\left(\Omega_{1}\right)}\partial_{i}\pac\right)\\
 & =\int_{\Omega_{2}}E\left[\partial_{i}\pac\right]\varphi,
\end{alignat*}
\endgroup hence $E\left[\partial_{i}\pac\right]=\partial_{i}\left[E\pac\right]$
in $\spaceD^{'}\left(\Omega_{2}\right)$. Since $\partial_{i}\pac\in L^{2}\left(\Omega_{1}\right)$
by assumption, we deduce from this identity that $E\left[\partial_{i}\pac\right]\in L^{2}\left(\Omega_{2}\right)$.
Hence $E\pac\in H^{1}\left(\Omega_{2}\right)$.

Using the fact that $E$ is an isometry from $H_{0}^{1}\left(\Omega_{1}\right)$
to $H^{1}\left(\Omega_{2}\right)$ we deduce
\[
\Vert E\phi_{n}-E\pac\Vert_{H^{1}\left(\Omega_{2}\right)}=\Vert E\left(\phi_{n}-\pac\right)\Vert_{H^{1}\left(\Omega_{2}\right)}=\Vert\phi_{n}-\pac\Vert_{H^{1}\left(\Omega_{1}\right)}\xrightarrow[n\rightarrow\infty]{}0.
\]
Since $E\phi_{n}\in\spaceContinuous_{0}^{\infty}\left(\Omega_{2}\right)$,
this shows that $E\pac\in H_{0}^{1}\left(\Omega_{2}\right)$.
\end{proof}

\subsection{Compact embedding and trace operator\label{sub:=00005BMISC=00005D_Embedding-Trace}}

Let $\Omega\subset\spaceR^{d}$, $d\in\llbracket1,\infty\llbracket$,
be a bounded open set with a Lipschitz boundary.

The embedding $H^{1}(\Omega)\subset H^{s}(\Omega)$ with $s\in[0,1)$
is compact \cite[Thm.~1.4.3.2]{grisvard2011elliptic}. (See \cite[Thm.~16.17]{lionsMagenes1972BVP1}
for smooth domains.)

The trace operator $H^{s}(\Omega)\rightarrow H^{s-\nicefrac{1}{2}}(\partial\Omega)$
with $s\in(\nicefrac{1}{2},1]$ is continuous and surjective \cite[Thm.~1.5.1.2]{grisvard2011elliptic}.
(See \cite[Thm.~1]{ding1996trace} if $\Omega$ is also simply connected
and \cite[Thm.~9.4]{lionsMagenes1972BVP1} for smooth domains.)

The trace operator $H_{\opDiv}(\Omega)\rightarrow H^{-\frac{1}{2}}(\partial\Omega)$,
$\uac\mapsto\uac\cdot\normal$ is continuous \cite[Thm.~2.5]{giraultraviart1986femNavierStokes},
and the following Green's formula holds for $\test\in H^{1}(\Omega)$
\cite[Eq.~(2.17)]{giraultraviart1986femNavierStokes}
\begin{equation}
(\uac,\nabla\test)+(\opDiv\uac,\test)=\langle\uac\cdot\normal,\overline{\test}\rangle_{H^{-\frac{1}{2}}(\partial\Omega),H^{\frac{1}{2}}(\partial\Omega)}.\label{eq:=00005BPRE=00005D_Green-Formula}
\end{equation}

\subsection{Hodge decomposition\label{sub:=00005BMISC=00005D_Hodge-decomposition}}

Let $\Omega\subset\spaceR^{d}$, $d\in\llbracket1,\infty\llbracket$,
be a connected open set with a Lipschitz boundary. The following orthogonal
decomposition holds \cite[Prop.~IX.1]{dautraylions1980vol3spectral}
\begin{equation}
(L^{2}(\Omega))^{d}=\nabla H^{1}(\Omega)\varoplus H_{\opDiv0,0}(\Omega),\label{eq:=00005BPRE=00005D_Hodge-Decomposition}
\end{equation}
where
\[
\nabla H^{1}(\Omega)\coloneqq\left\{ \vector f\in(L^{2}(\Omega))^{d}\;\vert\;\exists g\in H^{1}(\Omega):\;\vector f=\nabla g\right\}
\]
is a closed subspace of $(L^{2}(\Omega))^{d}$ and
\[
H_{\opDiv0,0}(\Omega)\coloneqq\left\{ \vector f\in H_{\opDiv}(\Omega)\;\vert\;\opDiv\vector f=0,\;\vector f\cdot\normal=0\;\text{in }H^{-\frac{1}{2}}(\partial\Omega)\right\} .
\]
This result still holds true when $\Omega$ is disconnected
(the proof of \cite[Prop.~IX.1]{dautraylions1980vol3spectral} relies
on Green's formula (\ref{eq:=00005BPRE=00005D_Green-Formula}) as
well as the compactness of the embedding $H^{1}\left(\Omega\right)\subset L^{2}\left(\Omega\right)$,
needed to apply Peetre's lemma).
\begin{rem}
The space $H_{\opDiv0,0}(\Omega)$ is studied in \cite[Chap.~IX]{dautraylions1980vol3spectral}
for $n=2$ or $3$. For instance,
\[
\mathbb{H}_{1}\coloneqq H_{\opDiv0,0}(\Omega)\cap\left\{ \vector f\in(L^{2}(\Omega))^{d}\;\vert\;\nabla\times\vector f=\vector 0\right\}
\]
has a finite dimension under suitable assumptions on the set $\Omega$
\cite[Prop.~IX.2]{dautraylions1980vol3spectral}.
\end{rem}

\subsection{Semigroups of linear operators\label{sub:=00005BMISC=00005D_Asymptotic-stability-of-Semigroup}}
\begin{thm}[Lumer-Phillips]
\label{thm:=00005BSTAB=00005D_Lumer-Phillips}Let $\spaceState$
be a complex Hilbert space and $\opA:\,\spaceDomain\subset\spaceState\rightarrow\spaceState$
an unbounded operator. If $\Re(\opA\state,\state)_{H}\leq0$ for every
$\state\in\spaceDomain$ and $\opId-\opA$ is surjective, then $\opA$
is the infinitesimal generator of a strongly continuous semigroup
of contractions $\opT(t)\in\spaceBounded(\spaceState)$.\end{thm}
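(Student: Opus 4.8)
The plan is to verify the hypotheses of the Hille--Yosida generation theorem for contraction semigroups: that $\opA$ is closed and densely defined, that $(0,\infty)\subset\rho(\opA)$, and that $\Vert(\lambda\opId-\opA)^{-1}\Vert\leq\lambda^{-1}$ for every $\lambda>0$. The engine driving every step is the dissipativity estimate: for $\state\in\spaceDomain$ and $\lambda>0$, the Cauchy--Schwarz inequality together with $\Re(\opA\state,\state)_{\spaceState}\leq0$ gives
\[
\Vert(\lambda\opId-\opA)\state\Vert_{\spaceState}\,\Vert\state\Vert_{\spaceState}\geq\Re\bigl((\lambda\opId-\opA)\state,\state\bigr)_{\spaceState}\geq\lambda\Vert\state\Vert_{\spaceState}^{2},
\]
whence $\Vert(\lambda\opId-\opA)\state\Vert_{\spaceState}\geq\lambda\Vert\state\Vert_{\spaceState}$. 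In particular each $\lambda\opId-\opA$ is injective and, wherever it is also surjective, its inverse is bounded by $\lambda^{-1}$.

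First I would establish that $\spaceDomain$ is dense, using an argument specific to Hilbert spaces. Suppose $\state_{0}\in\spaceState$ is orthogonal to $\spaceDomain$. By the surjectivity hypothesis there is $\state\in\spaceDomain$ with $(\opId-\opA)\state=\state_{0}$, and pairing with $\state\in\spaceDomain$ gives $0=(\state_{0},\state)_{\spaceState}=\Vert\state\Vert_{\spaceState}^{2}-(\opA\state,\state)_{\spaceState}$; taking real parts and invoking dissipativity forces $\Vert\state\Vert_{\spaceState}^{2}\leq0$, so $\state=0$ and hence $\state_{0}=0$. Next, the case $\lambda=1$ of the estimate shows $\opId-\opA$ is injective, and it is surjective by hypothesis; the norm bound then makes $(\opId-\opA)^{-1}$ a bounded operator on $\spaceState$, so $1\in\rho(\opA)$ and $\opA$ is closed.

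The one step requiring genuine care is upgrading the range condition from $\lambda=1$ to the whole half-line, and this I expect to be the main obstacle. I would argue by connectedness: the set $S\coloneqq\rho(\opA)\cap(0,\infty)$ is nonempty (it contains $1$) and open (the resolvent set is open). To see it is closed in $(0,\infty)$, note that for any $\lambda\in S$ the resolvent bound $\Vert(\lambda\opId-\opA)^{-1}\Vert\leq\lambda^{-1}$ implies, via the Neumann series, that the open disc of radius $\lambda$ centred at $\lambda$ lies in $\rho(\opA)$; this disc meets the real axis in $(0,2\lambda)$, so any limit point of $S$ in $(0,\infty)$ again belongs to $S$. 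Since $(0,\infty)$ is connected, $S=(0,\infty)$, and the estimate supplies the uniform bound $\Vert(\lambda\opId-\opA)^{-1}\Vert\leq\lambda^{-1}$ throughout. With $\opA$ closed, densely defined, $(0,\infty)\subset\rho(\opA)$, and this resolvent bound in hand, the Hille--Yosida theorem yields that $\opA$ generates a strongly continuous semigroup of contractions $\opT(t)\in\spaceBounded(\spaceState)$. The delicate point is precisely that the $\lambda^{-1}$ bound must do double duty: it both propagates the resolvent set along $(0,\infty)$ and persists to the limit, which is exactly the quantitative input the generation theorem demands.
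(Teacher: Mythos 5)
Your proposal is correct. Note that the paper does not actually write out a proof: it delegates everything to Pazy, citing Theorem 1.4.3 (the Lumer--Phillips generation theorem) together with Theorem 1.4.6, which states that a dissipative operator with $R(\opId-\opA)=\spaceState$ on a \emph{reflexive} Banach space is automatically densely defined. Your argument proves both ingredients from scratch, and the genuinely different step is the density one: where the cited reflexivity result goes through weak compactness of bounded sequences, you exploit the Hilbert structure directly, taking $\state_0$ orthogonal to $\spaceDomain$, writing $\state_0=(\opId-\opA)\state$ by surjectivity, and pairing with $\state$ so that dissipativity forces $\Vert\state\Vert_{\spaceState}^{2}\leq 0$. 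This is shorter and entirely elementary, at the price of not extending beyond Hilbert spaces --- which is all the paper needs, since it explicitly restricts to complex Hilbert spaces. The rest of your proof (the estimate $\Vert(\lambda\opId-\opA)\state\Vert_{\spaceState}\geq\lambda\Vert\state\Vert_{\spaceState}$, closedness from $1\in\rho(\opA)$, the open-and-closed connectedness argument propagating the resolvent set along $(0,\infty)$ via the Neumann series, and the final appeal to Hille--Yosida) is precisely the standard proof of Lumer--Phillips contained in the cited reference, so there your route and the paper's coincide up to the level of detail supplied.
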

\begin{proof}
The result follows from \cite[Thms.~4.3 \& 4.6]{pazy1983stability}
since Hilbert spaces are reflexive \cite[Thm.~8.9]{lax2002funana}. \end{proof}
\begin{thm}[Asymptotic stability \cite{arendt1988tauberian,lyubich1988asymptotic}]
\label{thm:=00005BSTAB=00005D_Arendt-Batty} Let $\spaceState$ be
a complex Hilbert space and $\opA:\,\spaceDomain\subset\spaceState\rightarrow\spaceState$
be the infinitesimal generator of a strongly continuous semigroup
$\opT(t)\in\spaceBounded(\spaceState)$ of contractions. If $\sigma_{p}(\opA)\cap i\spaceR=\varnothing$
and $\sigma(\opA)\cap i\spaceR$ is countable, then $\opT$ is asymptotically
stable, i.e. $\opT(t)\state_{0}\rightarrow0$ in $\spaceState$ as
$t\rightarrow\infty$ for any $\state_{0}\in\spaceState$.
\end{thm}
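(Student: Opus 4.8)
This is precisely the Arendt--Batty--Lyubich--V{\~u} theorem, so rather than reprove it from scratch I would follow the classical route: reduce the semigroup statement to a Tauberian theorem for bounded functions with countable half-line spectrum. First I would reduce to a dense set. Since $\opT$ is a contraction semigroup, $\sup_{t\geq0}\Vert\opT(t)\Vert\leq1$, so a standard $\varepsilon/2$-approximation shows that it suffices to establish $\Vert\opT(t)\state_0\Vert\to0$ for $\state_0$ in the dense subspace $\spaceDomain$. Fixing $\state_0\in\spaceDomain$ and writing $u(t)\coloneqq\opT(t)\state_0$, the orbit $u$ is bounded and, because $u'(t)=\opT(t)\opA\state_0$ is bounded, Lipschitz and hence uniformly continuous; moreover its Laplace transform is the resolvent,
\[
\int_0^\infty e^{-\lambda t}u(t)\,\dinf t=(\lambda\opId-\opA)^{-1}\state_0\quad(\Re(\lambda)>0).
\]

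The two hypotheses then feed the Tauberian machine through two distinct spectral inputs. On the one hand, the map $\lambda\mapsto(\lambda\opId-\opA)^{-1}\state_0$ is holomorphic on $\rho(\opA)$, so the Carleman (half-line) spectrum of $u$ — the set of $\eta\in\spaceR$ near which this transform admits no holomorphic extension across $i\eta$ — is contained in $\{\eta\;\vert\;i\eta\in\sigma(\opA)\}$, which is countable by assumption. On the other hand, since $\spaceState$ is a Hilbert space, hence reflexive, every bounded $C_0$-semigroup is mean ergodic, so for each $\eta$ the rescaled semigroup $e^{-i\eta t}\opT(t)$ gives a convergent boundary Abel mean
\[
\lim_{\alpha\downarrow0}\alpha\,\bigl((\alpha+i\eta)\opId-\opA\bigr)^{-1}\state_0=P_\eta\state_0,
\]
where $P_\eta$ is the ergodic projection onto $\opKer(i\eta\opId-\opA)$. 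As $\sigma_{p}(\opA)\cap i\spaceR=\varnothing$, this kernel is trivial and $P_\eta\state_0=0$ for every $\eta$.

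Finally I would invoke the Tauberian theorem at the heart of \cite{arendt1988tauberian,lyubich1988asymptotic}: a bounded uniformly continuous function whose half-line spectrum is countable and all of whose boundary Abel means vanish must tend to $0$ at infinity. Applied to $u(t)=\opT(t)\state_0$ this yields decay on $\spaceDomain$, and hence on all of $\spaceState$ by the density reduction. The genuinely hard part is this last Tauberian step: its proof runs by a Baire-category/transfinite induction on the countable closed exceptional set on $i\spaceR$, successively stripping off isolated points — where a contour-integration estimate, using boundedness and uniform continuity, controls the local oscillation of $u$ — and passing to the derived set, countability being exactly what makes the induction terminate. By comparison the supporting verifications (the Laplace-transform identity, the inclusion of the Carleman spectrum in $\sigma(\opA)\cap i\spaceR$, and the mean-ergodic identification of the boundary Abel limit, which relies on reflexivity of $\spaceState$) are routine.
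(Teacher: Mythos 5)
The paper does not prove this theorem: it is quoted from \cite{arendt1988tauberian,lyubich1988asymptotic} and invoked as a black box inside Corollary~\ref{cor:=00005BSTAB=00005D_Asymptotic-Stability}, so there is no in-paper argument to compare against. Your outline correctly reproduces the classical Arendt--Batty Tauberian route, and the one delicate point is handled properly: the general Banach-space version of the theorem assumes $\sigma_{p}(\opA^{*})\cap i\spaceR=\varnothing$, and your appeal to reflexivity of $\spaceState$ together with the mean ergodic theorem --- identifying the boundary Abel limits $\lim_{\alpha\downarrow0}\alpha((\alpha+i\eta)\opId-\opA)^{-1}\state_{0}$ with the projection onto $\opKer(i\eta\opId-\opA)$ along $\overline{R(i\eta\opId-\opA)}$ --- is exactly the standard way to reduce this to the stated hypothesis $\sigma_{p}(\opA)\cap i\spaceR=\varnothing$. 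Deferring the hard Tauberian step (the transfinite induction on the countable closed singular set) to the cited references is consistent with the paper's own treatment, so the proposal is acceptable as it stands.
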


\section{Application of the invariance principle\label{sub:=00005BSTAB=00005D_Invariance-Principle}}

The purpose of this appendix is to justify why, in this paper, we
rely on Corollary~\ref{cor:=00005BSTAB=00005D_Asymptotic-Stability}
rather than the invariance principle, commonly used with dynamical
systems on Banach spaces. Theorem~\ref{thm:=00005BSTAB=00005D_Invariance-Principle}
below states the invariance principle for the case of interest herein,
i.e. a linear Cauchy problem (\ref{eq:=00005BSTAB=00005D_Abstract-Cauchy-Problem})
for which the Lyapunov functional is $\frac{1}{2}\Vert\cdot\Vert_{\spaceState}^{2}$.
(For further background, see \cite[\S~3.7]{luoGuoMorgul2012stability}
and \cite[Chap.~9]{cazenave1998evolution}.)
\begin{thm}[Invariance principle]
\label{thm:=00005BSTAB=00005D_Invariance-Principle} Let $\opA$
be the infinitesimal generator of a strongly continuous semigroup
of contractions $\opT(t)\in\spaceBounded(\spaceState)$ and $\state_{0}\in\spaceState$.
If the orbit $\gamma(\state_{0})\coloneqq\bigcup_{t\geq0}\opT(t)\state_{0}$
lies in a compact set of $\spaceState$, then $\opT(t)\state_{0}\rightarrow M$
as $t\rightarrow\infty$, where $M$ is the largest $\opT$-invariant
set in
\begin{equation}
\left\{ \state\in\spaceDomain\;\left|\;\Re\left[(\opA\state,\state)_{\spaceState}\right]=0\right.\right\} .\label{eq:=00005BSTAB=00005D_IVP-Stationary-Set}
\end{equation}
\end{thm}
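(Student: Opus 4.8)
The plan is to run the classical argument behind LaSalle's invariance principle, taking $\Lyapunov(\state)\coloneqq\frac{1}{2}\Vert\state\Vert_{\spaceState}^{2}$ as Lyapunov functional and studying the $\omega$-limit set of the orbit,
\[
\omega(\state_{0})\coloneqq\left\{ \state\in\spaceState\;\vert\;\exists\,t_{n}\rightarrow\infty,\;\opT(t_{n})\state_{0}\rightarrow\state\right\} .
\]
I would first show that $\omega(\state_{0})$ is nonempty, compact, and $\opT$-invariant and that it attracts the orbit; then that $\Lyapunov$ is constant on it; and finally that this constancy forces $\omega(\state_{0})$ into the largest invariant subset $M$ of the stationary set (\ref{eq:=00005BSTAB=00005D_IVP-Stationary-Set}), which yields the conclusion.

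First I would record two monotonicity facts. Since $\opT(t)$ is a contraction semigroup, $t\mapsto\Lyapunov(\opT(t)\state_{0})$ is non-increasing for \emph{any} $\state_{0}\in\spaceState$; being bounded below by $0$, it converges to some $c\geq0$ as $t\rightarrow\infty$. Moreover, for $\state\in\spaceDomain$ the trajectory is a classical solution, so $\opT(t)\state\in\spaceDomain$ and
\[
\frac{\dinf}{\dinf t}\Lyapunov(\opT(t)\state)=\Re\left[(\opA\,\opT(t)\state,\opT(t)\state)_{\spaceState}\right]\leq0,
\]
which is the bridge to (\ref{eq:=00005BSTAB=00005D_IVP-Stationary-Set}).

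Next I would invoke the standard consequences of precompactness: since $\gamma(\state_{0})$ lies in a compact set, $\omega(\state_{0})$ is nonempty, compact, and $\opT$-invariant, and $\operatorname{dist}(\opT(t)\state_{0},\omega(\state_{0}))\rightarrow0$ as $t\rightarrow\infty$ (see \cite[\S~3.7]{luoGuoMorgul2012stability}). By continuity of $\Lyapunov$, the limit $c$ propagates to the whole set: if $\state=\lim_{n}\opT(t_{n})\state_{0}\in\omega(\state_{0})$ then $\Lyapunov(\state)=\lim_{n}\Lyapunov(\opT(t_{n})\state_{0})=c$, so $\Lyapunov\equiv c$ on $\omega(\state_{0})$. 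Using invariance, for $\state\in\omega(\state_{0})$ the forward orbit $\opT(t)\state$ stays in $\omega(\state_{0})$, whence $t\mapsto\Lyapunov(\opT(t)\state)\equiv c$; differentiating in $t$ (at those instants where the orbit lies in $\spaceDomain$) and using the identity above forces $\Re[(\opA\,\opT(t)\state,\opT(t)\state)_{\spaceState}]=0$. Thus $\omega(\state_{0})$ would be an invariant set on which the dissipation vanishes, giving $\omega(\state_{0})\subseteq M$, and combining with the attraction property would yield $\opT(t)\state_{0}\rightarrow M$.

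The hard part will be the interplay with the domain $\spaceDomain$ in (\ref{eq:=00005BSTAB=00005D_IVP-Stationary-Set}): differentiation of $\Lyapunov$ along a trajectory is only licit where the trajectory takes values in $\spaceDomain$, so the constancy of $\Lyapunov$ on the invariant set $\omega(\state_{0})$ must be upgraded to the pointwise relation $\Re[(\opA\state,\state)_{\spaceState}]=0$ with care. The clean way is to exploit that on the compact invariant set $\omega(\state_{0})$ the energy is conserved, so $\opT$ restricts to a group of isometries there and each limit point carries an eternal, bounded orbit regular enough to be a classical solution (hence in $\spaceDomain$); alternatively one argues through the integral form of the energy balance. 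Establishing the topological properties of $\omega(\state_{0})$ from precompactness, while classical, is the other place where the hypothesis is genuinely used.
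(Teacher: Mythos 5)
Your argument is correct in outline, but it follows a genuinely different route from the paper: the paper does not re-derive LaSalle's principle at all. It merely checks that $\Lyapunov\coloneqq\frac{1}{2}\Vert\cdot\Vert_{\spaceState}^{2}$ is continuous and non-increasing along orbits (immediate from the contraction property) and then invokes Hale's invariance principle \cite[Thm.~1]{hale1969invarianceprinciple} as a black box, which directly gives attraction to the largest invariant subset of $\left\{ \state\in\spaceState\;\vert\;\lim_{t\rightarrow0^{+}}t^{-1}\left(\Lyapunov(\opT(t)\state)-\Lyapunov(\state)\right)=0\right\}$. What you propose is to unfold the proof of that cited theorem via the $\omega$-limit set; this is the classical argument and buys self-containedness, at the price of the difficulty you yourself flag: upgrading constancy of $\Lyapunov$ on $\omega(\state_{0})$ to membership in the set (\ref{eq:=00005BSTAB=00005D_IVP-Stationary-Set}), which is indexed by $\spaceDomain$. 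Your suggested fixes (isometric group on the $\omega$-limit set, eternal orbits being classical solutions) do not by themselves place $\omega(\state_{0})$ inside $\spaceDomain$. The clean resolution is precisely Hale's formulation, where the stationary set is defined by the vanishing of the Dini derivative of $\Lyapunov$ over all of $\spaceState$, and (\ref{eq:=00005BSTAB=00005D_IVP-Stationary-Set}) is its trace on $\spaceDomain$, on which the Dini derivative equals $\Re\left[(\opA\state,\state)_{\spaceState}\right]$; the paper glosses over the same identification, so your proposal is not less rigorous than the printed proof. If you want a fully self-contained argument, either cite Hale as the paper does, or first take $\state_{0}\in\spaceDomain$ (then $\Vert\opA\opT(t)\state_{0}\Vert_{\spaceState}\leq\Vert\opA\state_{0}\Vert_{\spaceState}$, and weak closedness of the graph of $\opA$ keeps the $\omega$-limit points in $\spaceDomain$, so your differentiation step is licit) and then extend to general $\state_{0}$ by density and the contraction property.
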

\begin{proof}
The function $\Lyapunov\coloneqq\frac{1}{2}\Vert\cdot\Vert_{\spaceState}^{2}$
is continuous on $\spaceState$ and satisfies $\Lyapunov(\opT(t)\state)\leq\Lyapunov(\state)$
for any $\state\in\spaceState$ so that it is a Lyapunov functional.
The invariance principle \cite[Thm.~1]{hale1969invarianceprinciple}
then shows that $\opT(t)\state_{0}$ is attracted to the largest invariant
set of
\[
\left\{ \state\in\spaceState\;\left|\;\lim_{t\rightarrow0^{+}}t^{-1}(\Lyapunov(\opT(t)\state)-\Lyapunov(\state))=0\right.\right\} .
\]

\end{proof}
Let us now discuss the application of Theorem~\ref{thm:=00005BSTAB=00005D_Invariance-Principle}
to (\ref{eq:=00005BMOD=00005D_Wave-Equation},\ref{eq:=00005BMOD=00005D_IBC}),
assuming we know a dissipative realization of the impedance operator
$u\mapsto z\star u$ in a state space $\spaceDiff_{0}$.

The first step is to establish that the largest invariant subset of
(\ref{eq:=00005BSTAB=00005D_IVP-Stationary-Set}) reduces to $\{0\}$,
i.e. that the only solution of (\ref{eq:=00005BSTAB=00005D_Abstract-Cauchy-Problem})
in (\ref{eq:=00005BSTAB=00005D_IVP-Stationary-Set}) is null,
which is verified by the evolution operators defined in Sections~\ref{sec:=00005BDELAY=00005D_Delay-impedance}--\ref{sec:=00005BDER=00005D_Addition-of-Derivative}.
This requires to exclude solenoidal fields from $\state_{0}$, see
Remark~\ref{rem:|Delay=00005D_Exclusion-Solenoidal-Fields}.

The second step is to prove the precompactness of the orbit $\gamma(\state_{0})$
for any $\state_{0}$ in $\spaceState$. The following criterion can
be used, where for $s\in\rho(\opA)$ we denote the resolvent operator
by
\begin{equation}
R(s,\opA)\coloneqq(s\opId-\opA)^{-1}.\label{eq:=00005BSTAB=00005D_Resolvent-Operator}
\end{equation}

\begin{thm}[{\cite[Thm.~3]{dafermos1973asymptotic}}]
\label{thm:=00005BIVP=00005D_Precompactness-Criterion} Let $\opA$
be the infinitesimal generator of a strongly continuous semigroup
of contractions on $\spaceState$. If $R(s,\opA)$ is compact for
some $s>0$, then $\gamma(\state_{0})$ is precompact for any $\state_{0}\in\spaceState$.
\end{thm}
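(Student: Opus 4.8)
The plan is to reduce the statement to the well-known equivalence between compactness of the resolvent and compactness of the embedding $\spaceDomain\hookrightarrow\spaceState$, where $\spaceDomain$ is endowed with the graph norm $\state\mapsto\Vert\state\Vert_{\spaceState}+\Vert\opA\state\Vert_{\spaceState}$, and then to argue in two stages: first I would settle precompactness for initial data in the dense subspace $\spaceDomain$, and then transfer it to all of $\spaceState$ using the contraction property. To begin, I would record that compactness of $R(s,\opA)$ for a single $s>0$ implies compactness of the embedding $\spaceDomain\hookrightarrow\spaceState$: if $(\state_n)$ is bounded in the graph norm, then $(s\opId-\opA)\state_n$ is bounded in $\spaceState$, and since $\state_n=R(s,\opA)(s\opId-\opA)\state_n$, the compactness of $R(s,\opA)$ produces a subsequence converging in $\spaceState$.

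Next, for $\state_0\in\spaceDomain$, I would use that $\spaceDomain$ is invariant under $\opT$ and that $\opA$ commutes with $\opT(t)$ on its domain, so that $\opA\,\opT(t)\state_0=\opT(t)\,\opA\state_0$. The contraction estimate $\Vert\opT(t)\Vert_{\spaceBounded(\spaceState)}\leq 1$ then yields both $\Vert\opT(t)\state_0\Vert_{\spaceState}\leq\Vert\state_0\Vert_{\spaceState}$ and $\Vert\opA\,\opT(t)\state_0\Vert_{\spaceState}\leq\Vert\opA\state_0\Vert_{\spaceState}$, so the orbit $\gamma(\state_0)$ is bounded in the graph norm of $\spaceDomain$. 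By the compact embedding established above, $\gamma(\state_0)$ is then precompact in $\spaceState$.

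Finally, for an arbitrary $\state_0\in\spaceState$, I would exploit the density of $\spaceDomain$ in $\spaceState$. Given $\epsilon>0$, choose $\state_0^{\epsilon}\in\spaceDomain$ with $\Vert\state_0-\state_0^{\epsilon}\Vert_{\spaceState}<\epsilon$; the contraction property gives $\Vert\opT(t)\state_0-\opT(t)\state_0^{\epsilon}\Vert_{\spaceState}<\epsilon$ uniformly in $t\geq 0$, so $\gamma(\state_0)$ lies in the $\epsilon$-neighborhood of the precompact set $\gamma(\state_0^{\epsilon})$. Extracting a finite $\epsilon$-net of $\gamma(\state_0^{\epsilon})$ then furnishes a finite $2\epsilon$-net of $\gamma(\state_0)$; since $\epsilon$ is arbitrary, $\gamma(\state_0)$ is totally bounded, hence precompact in the complete space $\spaceState$.

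The main obstacle is precisely this last transfer step: precompactness is \emph{not} stable under pointwise passage to the limit, so one cannot simply take $\epsilon\to 0$ and conclude. The argument must instead be phrased through total boundedness, and the crucial input is that the approximation $\opT(t)\state_0^{\epsilon}\to\opT(t)\state_0$ holds \emph{uniformly} in $t$ — a fact that is available only because the semigroup is contractive. The first step (equivalence of compact resolvent and compact embedding) is standard, and the graph-norm bound in the second step is immediate once one notes that $\opA$ commutes with $\opT(t)$ on $\spaceDomain$.
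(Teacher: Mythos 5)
Your proof is correct and complete: the reduction of resolvent compactness to compactness of the embedding of $\spaceDomain$ (with the graph norm) into $\spaceState$, the graph-norm bound on orbits issuing from $\spaceDomain$ via the commutation $\opA\,\opT(t)\state_0=\opT(t)\,\opA\state_0$ and contractivity, and the final density-plus-total-boundedness argument are all sound, and you correctly identify that the uniformity in $t$ of the approximation is the point that makes the last step work. The paper itself gives no proof of this statement --- it is quoted verbatim from Dafermos and Slemrod --- so there is nothing to compare against; your argument is the standard one for the linear contractive case and would serve as a self-contained substitute for that citation.
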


Using Theorem~\ref{thm:=00005BIVP=00005D_Precompactness-Criterion}
reduces to proving that the embedding $\spaceDomain\subset\spaceState$
is compact, which based on the examples covered in this paper boils
down to proving that the embeddings
\begin{equation}
H_{\opDiv}(\Omega)\times H^{1}(\Omega)\subset\nabla H^{1}(\Omega)\times L^{2}(\Omega)\quad\left(\text{a}\right),\;L^{2}(\partial\Omega;\spaceDiff_{1})\subset L^{2}(\partial\Omega;\spaceDiff_{0})\quad\left(\text{b}\right)\label{eq:=00005BIVP=00005D_Compact-Embedding}
\end{equation}
are compact, where $\spaceDiff_{0}$ is the energy space of the extended
variables and $\spaceDiff_{1}\subset\spaceDiff_{0}$.

The compactness of the embedding (\ref{eq:=00005BIVP=00005D_Compact-Embedding}a)
is obvious if $d=1$. If $d=3$, it can be proven using the following
regularity result: if $\Omega$ is a bounded simply connected open
set with Lipschitz boundary, \cite[Thm.~2]{costabel1990regularity}
\[
H_{\text{curl}}(\Omega)\cap\left\{ \uac\in H_{\opDiv}(\Omega)\;\left|\;\uac\cdot\normal\in L^{2}(\partial\Omega)\right.\right\} \subset H^{\frac{1}{2}}(\Omega)^{d}
\]
and $\nabla H^{1}(\Omega)\subset H_{\text{curl}}(\Omega)$ \cite[Thm.~2.9]{giraultraviart1986femNavierStokes}.
(Note the stringent requirement that $\Omega$ be simply connected.)

The compactness of (\ref{eq:=00005BIVP=00005D_Compact-Embedding}b)
depends upon both $d$ and the impedance kernel $z$. If $d=1$, then
it holds true if $\spaceDiff_{1}\subset\spaceDiff_{0}$ is compact
(which is satisfied by the delay impedance covered in Section~\ref{sec:=00005BDELAY=00005D_Delay-impedance},
where $\spaceDiff_{1}=H^{1}\left(-\tau,0\right)$ and $\spaceDiff_{0}=L^{2}\left(-\tau,0\right)$,
but not by the diffusive impedances covered in Sections~\ref{sec:=00005BDIFF=00005D_Standard-diffusive-impedance}--\ref{sec:=00005BEXTDIFF=00005D_Extended-diffusive-impedance}
) or if both $\spaceDiff_{1}$ and $\spaceDiff_{0}$ are finite-dimensional
(which is verified for a rational impedance). If $d>1$, then it is
not obvious.

\providecommand{\href}[2]{#2}
\providecommand{\arxiv}[1]{\href{http://arxiv.org/abs/#1}{arXiv:#1}}
\providecommand{\url}[1]{\texttt{#1}}
\providecommand{\urlprefix}{URL }



\begin{thebibliography}{10}

\bibitem{abbas2013polynomial}
{\sc Z.~Abbas and S.~Nicaise},
{\em Polynomial decay rate for a wave equation with general acoustic boundary feedback laws},
SeMA Journal, 61 (2013), 19--47.

\bibitem{abbas2015stability}
{\sc Z.~Abbas and S.~Nicaise},
{\em The multidimensional wave equation with generalized acoustic boundary conditions {I}: Strong stability},
 SIAM Journal on Control and Optimization, 53 (2015), 2558--2581.

\bibitem{adams1975Sobolev}
{\sc R.~A. Adams},
 Sobolev Spaces,
 Academic Press, New York, 1975.

\bibitem{alabauboussouira2009ibc}
{\sc F.~Alabau-Boussouira, J.~Pr{\"u}ss and R.~Zacher},
{\em Exponential and polynomial stability of a wave equation for boundary memory damping with singular kernels},
 Comptes Rendus Mathematique, 347 (2009), 277--282.

\bibitem{anderson1967prmatrices}
{\sc B.~D.~O. Anderson},
{\em A system theory criterion for positive real matrices},
 SIAM Journal on Control, 5 (1967), 171--182.

\bibitem{arendt1988tauberian}
{\sc W.~Arendt and C.~J. Batty},
{\em Tauberian theorems and stability of one-parameter semigroups},
 Transactions of the American Mathematical Society, 306 (1988), 837--852.

\bibitem{beltrami2014distributions}
{\sc E.~J. Beltrami and M.~R. Wohlers},
 Distributions and the Boundary Values of Analytic Functions,
 Academic Press, New York, 1966.

\bibitem{brezis2011fun}
{\sc H.~Brezis},
 Functional Analysis, Sobolev Spaces and Partial Differential Equations,
 Springer, New York, 2011.

\bibitem{cazenave1998evolution}
{\sc T.~Cazenave and A.~Haraux},
 An Introduction to Semilinear Evolution Equations,
 Oxford University Press, Oxford, 1998.

\bibitem{chen1981note}
{\sc G.~Chen},
{\em A note on the boundary stabilization of the wave equation},
 SIAM Journal on Control and Optimization, 19 (1981), 106--113.

\bibitem{cornilleau2009ibc}
{\sc P.~Cornilleau and S.~Nicaise},
{\sc Energy decay for solutions of the wave equation with general memory boundary conditions,
 Differential and Integral Equations, 22 (2009), 1173--1192.

\bibitem{costabel1990regularity}
{\sc M.~Costabel},
{\em A remark on the regularity of solutions of {M}axwell's equations on {L}ipschitz domains},
 Mathematical Methods in the Applied Sciences, 12 (1990), 365--368.

\bibitem{curtainewart1995infinitedim}
{\sc R.~F. Curtain and H.~Zwart},
 An Introduction to Infinite-Dimensional Linear Systems Theory,
 Springer, New York, 1995.

\bibitem{dafermos1973asymptotic}
{\sc C.~Dafermos and M.~Slemrod},
{\em Asymptotic behavior of nonlinear contraction semigroups},
 Journal of Functional Analysis, 13 (1973), 97--106.

\bibitem{dautraylions1980vol3spectral}
{\sc R.~Dautray and J.-L. Lions},
 Mathematical Analysis and Numerical Methods for Science and Technology,
 Springer-Verlag, Berlin, 1992.

\bibitem{desch2010stabilization}
{\sc W.~Desch, E.~Fa{\v{s}}angov{\'a}, J.~Milota and G.~Propst},
{\em Stabilization through viscoelastic boundary damping: A semigroup approach},
 in \emph{Semigroup Forum},~\textbf{80} (2010), 405--415.

\bibitem{desch1988exponential}
{\sc W.~Desch and R.~K. Miller},
{\em Exponential stabilization of {V}olterra integral equations with singular kernels},
 The Journal of Integral Equations and Applications, 1 (1988), 397--433.

\bibitem{ding1996trace}
{\sc Z.~Ding},
{\em A proof of the trace theorem of {S}obolev spaces on {L}ipschitz domains},
 Proceedings of the American Mathematical Society, 124 (1996), 591--600.

\bibitem{duffy2004transform}
{\sc D.~G. Duffy},
 Transform Methods for Solving Partial Differential Equations,
 CRC Press, Boca Raton, FL, 1994.

\bibitem{engel2000semigroup}
{\sc K.-J. Engel and R.~Nagel},
 One-parameter Semigroups for Linear Evolution Equations,
 Springer-Verlag, New York, 2000.

\bibitem{garrappa2016models}
{\sc R.~Garrappa, F.~Mainardi and M.~Guido},
{\em Models of dielectric relaxation based on completely monotone functions},
 Fractional Calculus and Applied Analysis, 19 (2016), 1105--1160.

\bibitem{gilbarg2001elliptic}
{\sc D.~Gilbarg and N.~S. Trudinger},
 Elliptic Partial Differential Equations of Second Order,
 2nd edition, Springer-Verlag, Berlin, 2001.

\bibitem{giraultraviart1986femNavierStokes}
{\sc V.~Girault and P.-A. Raviart},
 Finite Element Methods for Navier-Stokes Equations,
 Springer-Verlag, Berlin, 1986.

\bibitem{grabowski2013fracIBC}
{\sc P.~Grabowski},
{\em Stabilization of wave equation using standard/fractional derivative in boundary damping},
 in \emph{Advances in the Theory and Applications of Non-integer Order Systems: 5th Conference on Non-integer Order Calculus and Its Applications, Cracow, Poland} (eds. W.~Mitkowski, J.~Kacprzyk and J.~Baranowski), Springer, Cham, 257} (2013), 101--121.

\bibitem{gripenberg1990volterra}
{\sc G.~Gripenberg, S.-O. Londen and O.~J. Staffans},
 Volterra Integral and Functional Equations,
 Cambridge University Press, Cambridge, 1990.

\bibitem{grisvard2011elliptic}
{\sc P.~Grisvard},
 Elliptic Problems in Nonsmooth Domains,
 SIAM, Philadelphia, 2011.

\bibitem{hale1969invarianceprinciple}
{\sc J.~K. Hale},
{\em Dynamical systems and stability},
 Journal of Mathematical Analysis and Applications, 26 (1969), 39--59.

\bibitem{helie2006diffusive}
{\sc T.~H{\'e}lie and D.~Matignon},
{\em Diffusive representations for the analysis and simulation of flared acoustic pipes with visco-thermal losses},
 Mathematical Models and Methods in Applied Sciences, 16 (2006), 503--536.

\bibitem{hiptmair2014FastQuadratureIBC}
{\sc R.~Hiptmair, M.~L\'{o}pez-Fern\'{a}ndez and A.~Paganini},
{\em Fast convolution quadrature based impedance boundary conditions},
 Journal of Computational and Applied Mathematics, 263 (2014), 500--517.

\bibitem{hormander1990pdevol1}
{\sc L.~H\"ormander},
 The Analysis of Linear Partial Differential Operators {I},
 2nd edition, Springer-Verlag, Berlin, 1990.

\bibitem{kato1995ope}
{\sc T.~Kato},
 Perturbation Theory for Linear Operators,
 2nd edition, Springer-Verlag, Berlin, 1995.

\bibitem{komornik1990direct}
{\sc V.~Komornik and E.~Zuazua},
{\sc A direct method for the boundary stabilization of the wave equation,
 Journal de Math\'ematiques Pures et Appliqu\'ees, 69 (1990), 33--54.

\bibitem{lagnese1983decay}
{\sc J.~Lagnese},
{\em Decay of solutions of wave equations in a bounded region with boundary dissipation},
 Journal of Differential Equations, 50 (1983), 163--182.

\bibitem{lax2002funana}
{\sc P.~D. Lax},
 Functional Analysis,
 John Wiley \& Sons, New York, 2002.

\bibitem{li2018memoryibc}
{\sc C.~Li, J.~Liang and T.-J. Xiao},
{\em Polynomial stability for wave equations with acoustic boundary conditions and boundary memory damping},
 Applied Mathematics and Computation, 321 (2018), 593--601.

\bibitem{lionsMagenes1972BVP1}
{\sc J.-L. Lions and E.~Magenes},
 Non-Homogeneous Boundary Value Problems and Applications}, vol.~{I},
 Springer-Verlag, 1972.

\bibitem{lombard2016fractional}
{\sc B.~Lombard and D.~Matignon},
{\em Diffusive approximation of a time-fractional {B}urger's equation in nonlinear acoustics},
 SIAM Journal on Applied Mathematics, 76 (2016), 1765--1791.

\bibitem{lozano2013dissipative}
{\sc R.~Lozano, B.~Brogliato, O.~Egeland and B.~Maschke},
 Dissipative Systems Analysis and Control: Theory and Applications,
 Springer-Verlag, London, 2000.

\bibitem{luoGuoMorgul2012stability}
{\sc Z.-H. Luo, B.-Z. Guo and {\"O}.~Morg{\"u}l},
 Stability and Stabilization of Infinite Dimensional Systems with Applications,
 Springer-Verlag London, Ltd., London, 1999.

\bibitem{lyubich1988asymptotic}
{\sc Y.~Lyubich and P.~V{\~u}},
{\em Asymptotic stability of linear differential equations in {B}anach spaces},
 Studia Mathematica, 88 (1988), 37--42.

\bibitem{mainardi1997frac}
{\sc F.~Mainardi},
{\em Fractional calculus: Some basic problems in continuum and statistical mechanics},
 Fractals and Fractional Calculus in Continuum Mechanics (Udine, 1996), 291--348, CISM Courses and Lect., 378, Springer, Vienna, 1997.

\bibitem{matignon2010diffusivewp}
{\sc D.~Matignon and H.~Zwart},
{\em Standard diffusive systems as well-posed linear systems,
 International Journal of Control}.

\bibitem{matignon2008introduction}
{\sc D.~Matignon},
{\em An introduction to fractional calculus},
 in \emph{Scaling, Fractals and Wavelets} (eds. P.~Abry, P.~Gon\c{c}alv\`{e}s and J.~Levy-Vehel), ISTE--Wiley, London--Hoboken, 2009, 237--277.

\bibitem{matignon2005asymptotic}
{\sc D.~Matignon and C.~Prieur},
{\em Asymptotic stability of linear conservative systems when coupled with diffusive systems},
 ESAIM: Control, Optimisation and Calculus of Variations, 11 (2005), 487--507.

\bibitem{matignon2014asymptotic}
{\sc D.~Matignon and C.~Prieur},
{\em Asymptotic stability of {W}ebster-{L}okshin equation},
 Mathematical Control and Related Fields, 4 (2014), 481--500.

\bibitem{monteghetti2017delay}
{\sc F.~Monteghetti, G.~Haine and D.~Matignon},
{\em Stability of linear fractional differential equations with delays: A coupled parabolic-hyperbolic {PDE}s formulation},
 in \emph{20th World Congress of the International Federation of Automatic Control (IFAC)}, 2017.

\bibitem{monteghetti2017tdibc}
{\sc F.~Monteghetti, D.~Matignon and E.~Piot},
{\em Energy analysis and discretization of nonlinear impedance boundary conditions for the time-domain linearized euler equations},
 Journal of Computational Physics, 375 (2018), 393--426.

\bibitem{monteghetti2016diffusive}
{\sc F.~Monteghetti, D.~Matignon, E.~Piot and L.~Pascal},
{\em Design of broadband time-domain impedance boundary conditions using the oscillatory-diffusive representation of acoustical models},
 The Journal of the Acoustical Society of America, 140 (2016), 1663--1674.

\bibitem{monteghetti2018dissertation}
{\sc F.~Monteghetti},
 Analysis and Discretization of Time-Domain Impedance Boundary Conditions in Aeroacoustics,
 PhD thesis, ISAE-SUPAERO, Universit\'{e} de Toulouse, Toulouse, France, 2018.

\bibitem{montseny1998diffusive}
{\sc G.~Montseny},
{\em Diffusive representation of pseudo-differential time-operators},
 in \emph{ESAIM: Proceedings}, 5 (1998), 159--175.

\bibitem{nicaise2006stability}
{\sc S.~Nicaise and C.~Pignotti},
{\em Stability and instability results of the wave equation with a delay term in the boundary or internal feedbacks},
 SIAM Journal on Control and Optimization, 45 (2006), 1561--1585.

\bibitem{pazy1983stability}
{\sc A.~Pazy},
 Semigroups of Linear Operators and Applications to Partial Differential Equations,
 2nd edition, Springer-Verlag, New York, 1983.

\bibitem{peralta2016delayibc}
{\sc G.~R. Peralta},
{\em Stabilization of viscoelastic wave equations with distributed or boundary delay},
 Zeitschrift F\"ur Analysis und Ihre Anwendungen, 35 (2016), 359--381.

\bibitem{peralta2018delayibc}
{\sc G.~R. Peralta},
{\em Stabilization of the wave equation with acoustic and delay boundary conditions},
 Semigroup Forum, 96 (2018), 357--376.

\bibitem{podlubny1999fractional} (MR1658022)
{\sc I.~Podlubny},
 Fractional Differential Equations,
 Academic Press, San Diego, 1999.

\bibitem{rellich1940eigenfunctions}
{\sc F.~Rellich},
{\em Darstellung der {E}igenwerte von ${\Delta}u+\lambda u=0$ durch ein {R}andintegral},
 Mathematische Zeitschrift, 46 (1940), 635--636.

\bibitem{samko1993fractional}
{\sc S.~G. Samko, A.~A. Kilbas and O.~I. Marichev},
 Fractional Integrals and Derivatives,
 Gordon and Breach, Yverdon, Switzerland, 1993.

\bibitem{sauter2017waveCQ}
{\sc S.~Sauter and M.~Schanz},
{\em Convolution quadrature for the wave equation with impedance boundary conditions},
 Journal of Computational Physics, 334 (2017), 442--459.

\bibitem{schwartz1966mathphys}
{\sc L.~Schwartz},
 Mathematics for the Physical Sciences,
 Hermann, Paris, 1966.

\bibitem{staffans1994well}
{\sc O.~J. Staffans},
{\em Well-posedness and stabilizability of a viscoelastic equation in energy space},
 Transactions of the American Mathematical Society, 345 (1994), 527--575.

\bibitem{staffans2002passive}
{\sc O.~J. Staffans},
{\em Passive and conservative continuous-time impedance and scattering systems. part {I}: Well-posed systems},
 Mathematics of Control, Signals and Systems, 15 (2002), 291--315.

\bibitem{staffans2005well}
{\sc O.~J. Staffans},
 Well-posed Linear Systems,
 Cambridge University Press, Cambridge, 2005.

\bibitem{stahn2018waveequation}
{\sc R.~Stahn},
{\em On the decay rate for the wave equation with viscoelastic boundary damping},
 Journal of Differential Equations, 265 (2018), 2793--2824.

\bibitem{tucsnak2014wellposed}
{\sc M.~Tucsnak and G.~Weiss},
{\em Well-posed systems -- the {LTI} case and beyond},
 Automatica, 50 (2014), 1757--1779.

\bibitem{wang2011stabilitydelay}
{\sc J.-M. Wang, B.-Z. Guo and M.~Krstic},
{\em Wave equation stabilization by delays equal to even multiples of the wave propagation time},
 SIAM Journal on Control and Optimization, 49 (2011), 517--554.

\bibitem{weiss2001wellposed}
{\sc G.~Weiss, O.~J. Staffans and M.~Tucsnak},
{\em Well-posed linear systems--a survey with emphasis on conservative systems},
 International Journal of Applied Mathematics and Computer Science, 11 (2001), 7--33.

\bibitem{yosida1980funana}
{\sc K.~Yosida},
 Functional Analysis,
6th edition, Springer-Verlag, New York, 1980.

\bibitem{yuferev2010sibc}
{\sc S.~V. Yuferev and N.~Ida},
{\em Surface Impedance Boundary Conditions: A Comprehensive Approach},
CRC Press, Boca Raton, 2010.


\end{thebibliography}
\end{document}